\theoremstyle{plain}
\newtheorem{thm}{Theorem}
\newtheorem{lemma}[thm]{Lemma}
\newtheorem{prop}[thm]{Proposition}
\newtheorem{cor}[thm]{Corollary}
\theoremstyle{remark}
\newtheorem{rmk}{Remark}
\newtheorem{hyp}{Assumption}
\theoremstyle{definition}
\newtheorem{definition}{Definition}
\newcommand{\NN}{\mathbb{N}}
\newcommand{\M}{\mathbb{M}}
\newcommand{\R}{\mathbb{R}}
\newcommand{\TT}{\mathbb{T}}
\newcommand{\e}{\epsilon}
\newcommand{\mc}{\mathcal}
\newcommand{\mb}{\mathbb}
\newcommand{\dom}{{\M \times \R^d}}
\newcommand{\be}{\begin{equation}}
\newcommand{\ee}{\end{equation}}
\newcommand*\dd{\mathop{}\!\mathrm{d}}
\renewcommand{\div}{\operatorname{div}}
\numberwithin{thm}{section}
\numberwithin{equation}{section}
\title{Kinetic-Type Mean Field Games with Non-separable local Hamiltonians}
\author{David M. Ambrose}
\address{Drexel University, Department of Mathematics, Philadelphia, PA 19104 USA}
\email{dma68@drexel.edu}
\author{Megan Griffin-Pickering}
\address{Department of Mathematics, University College London, London, UK \& Heilbronn Institute for Mathematical Research, Bristol, UK}
\email{m.griffin-pickering@ucl.ac.uk}
\author{Alp\'ar R. M\'esz\'aros}
\address{Department of Mathematical Sciences, Durham University, Durham, UK}
\email{alpar.r.meszaros@durham.ac.uk}
\begin{document}

\begin{abstract}
We prove well-posedness of a class of {\it kinetic-type} Mean Field Games, which typically arise when agents control their acceleration. Such systems include independent variables representing the spatial position as well as velocity. We consider non-separable Hamiltonians without any structural conditions, which depend locally on the density variable.  Our analysis is based on two main ingredients: an energy method for the forward-backward system in  Sobolev spaces, on the one hand and on a suitable {\it vector field method} to control derivatives with respect to the velocity variable, on the other hand. The careful combination of these two techniques reveals interesting phenomena applicable for Mean Field Games involving general classes of drift-diffusion operators and nonlinearities.
While many prior existence theories for general mean field games systems take the final datum function to be smoothing, we can allow this function to be non-smoothing, i.e. also depending locally on the final measure.  Our well-posedness results hold under an
appropriate smallness condition, assumed jointly on the data. 
\end{abstract}

\maketitle

\section{Introduction}

Mean Field Games (MFGs) models were first proposed around the mid 2000s in the seminal works of Lasry--Lions (\cite{lasryLions2,lasryLions3}) and Huang--Malham\'e--Caines (\cite{HuaMalCai}). The objective of both groups was to characterize limits of Nash equilibria for stochastic $N$-player games, when $N\to+\infty$. A natural way to analyze MFGs is via a coupled system of forward-backward nonlinear PDEs: a Hamilton--Jacobi--Bellman (HJB) equation describing the evolution of the value function of a representative agent and a Kolmogorov--Fokker--Planck (KFP) equation describing the evolution of the agent population density.

In this paper we consider the following {\it kinetic-type} MFG system:
\be \label{eq:MFG}
\left\{
\begin{array}{l}
- \partial_t u(t,z) - v \cdot D_x u(t,z) -  \frac{1}{2} \Delta_v u(t,z) - \e H(t,z, m(t,z), D_v u(t,z)) = 0,\\[5pt] 
\partial_t m(t,z)  + v \cdot D_x m(t,z) -  \frac{1}{2} \Delta_v m(t,z) + \e \div_v \left ( m D_p H(t,z, m(t,z), D_v u(t,z)) \right ) = 0,\\ [5pt]
u(T,z) = \delta g(z,m_T(z)),\\[5pt] 
m(0,z) = m^0(z), 
\end{array}
\right.
\ee
where the two PDEs are posed over the domain $(0,T) \times \dom$, where $\M$ is either the $d$-dimensional flat torus $\TT^d$ or the whole $d$-dimensional Euclidean space $\R^d$, and $T>0$ stands for a given time horizon.  Throughout the paper we use the short notation $z := (x,v) \in \dom$ to denote the position and velocity of the agents.

The data consist of $m^{0}\in\mathscr{P}(\dom)\cap H^{s}(\dom)$ which stands for the initial agent distribution (a probability measure, whose density is taken in a suitable Sobolev space), a Hamiltonian $H:(0,T)\times\dom\times[0,+\infty)\times\R^{d}\to\R$ and a final cost function $g:\dom\times[0,+\infty)\to\R$. The penultimate variable in the domain of definition of $H$ is the placeholder for the density variable and the last variable stands for the placeholder of the gradient of the value function with respect to $v$. Similarly, the last variable in the domain of definition for $g$ is a placeholder for $m(T,x,v)$. We emphasize at this point that both $H$ and $g$ are considered to depend {\it locally} on the density variable $m$, i.e. we always evaluate them at $m(t,x,v)$. We have placed two parameters $\e,\delta>0$ in front of $H$ and $g$, respectively to emphasize that the `size' of the Hamiltonian and final cost function can be tuned. In particular, in this paper we seek to prove a local well-posedness result for \eqref{eq:MFG} in suitable Sobolev spaces $H^{s}(\dom)$ and we will achieve these under suitable {\it smallness conditions} on these data and the additional parameters.

\medskip

\noindent {\bf Literature review.} Most of the well-posedness theories for MFGs assume a structural condition on the Hamiltonian, specifically that $H$ is additively separable: in the current context, the separability assumption would look like 
$$H(t,x,v,D_{v}u)=H_{0}(t,x,v,D_{v}u)-F(t,x,v,m),$$  
for some $H_{0}$ and $F$. The first  of these, $H_{0},$ would again be known as the Hamiltonian, and the second of these, $F,$ would be known as the coupling function or running cost. Frequently the Hamiltonian $H_{0}$ is then assumed to be convex in the variable which stands for the generalized momentum and the coupling function is assumed to be monotone.

Examples of well-posedness theories under these separability assumptions for the MFG PDE system (in the usual, non-kinetic setting, i.e. without the presence of the velocity variables, and when Hamiltonians depend locally on the density variable) are \cite{ambroseMFG1,jamesonDegenerate,CirGof:20,CirGof:21,gomesLog,gomesSub,gomesSuper,lasryLions2,lasryLions3,MesSil:18,porrettaARMA}. For a relatively complete literature review on these models we refer to the lecture notes \cite{CarPor:20}. 

Many important models of MFGs arising in applications in fact involve non-separable Hamiltonians, such as the model of household wealth and savings \cite{achdouMacroeconomic}, models for natural resource extraction \cite{chanSircar}, and
models of traffic flow with congestion effects \cite{AchPor, GhaMas:1, congestion2}.
There are many fewer well-posedness theories for MFGs involving non-separable Hamiltonians; without the structure afforded by these assumptions, smallness assumptions are typically relied upon to prove existence and uniqueness results. The first author proved existence of solutions for MFGs with data in spaces related to the Wiener algebra in \cite{ambroseMFG2} and with data in Sobolev spaces \cite{ambroseMFG4} (see also \cite{AmbMes:23}), and the authors of \cite{cirantGianniMannucci} have proved similar results.  Other than these general existence theorems, there are also existence theorems for mean field games with non-separable Hamiltonians for specific applications \cite{AchPor, ambroseMFG3, GhaMas:1, congestion2,graber1}. When Hamiltonians and final data functions are non-local and regularizing in the measure variable, such well-posedness theories have been settled for short time (cf. \cite{CarDel,CarCirPor}). The only global in time well-posedness results involving non-separable nonlocal Hamiltonians are in the displacement monotone setting, obtained in \cite{GanMesMouZha, MesMou:24, BanMesMou}.

The present work is the first to provide existence theory for kinetic-type mean field games with non-separable Hamiltonians that depend locally on the measure variable. In contrast to many results for MFGs in the literature, we will consider non-smoothing final data functions, $g.$
The work \cite{cirantGianniMannucci} on existence theory for non-separable MFGs (with local Hamiltonians) used smoothing final data
functions and alluded to the need for an additional smallness condition if this functions were instead non-smoothing.
In \cite{ambroseMFG4}, through the bulk of the work smoothing final cost functions were used, but this additional smallness
condition was specified in remarks at the end.  In the present work, we use from the beginning a non-smoothing $g$
and deal clearly with the smallness conditions which are therefore needed for the argument.

\medskip

Kinetic-type MFG have been considered relatively recently in the literature. These models are prototypical examples for MFGs where the agents are subject to general control systems in their individual optimization problems. In particular, such models have been first considered in the setting when agents control their acceleration. In the setting of regularizing separable Hamiltonians and final cost functions, such models (with control on the agents' acceleration or in the case of more general linear control systems) have been intensively studied recently in \cite{AchManMarTch:20, AchManMarTch:22, BarCar, CanMen:20, accelerationInfinity, DraFel, FelGomTad,GiaSil, ManMarMarTch, ManMarTch, acceleration3}. 

To the best of our knowledge, kinetic-type MFGs models involving Hamiltonians depending locally on the measure variable, have been considered in only two papers in the past. In \cite{Griffin-Pickering-Meszaros}, the second and third authors proposed a variational framework to deal with first order models, while in \cite{Mim} the author studied weak and renormalized solutions in the case of separable Hamiltonians.

\medskip

\noindent {\bf Our contributions in this paper.} 
The main theorem of this paper can be formulated in an informal way as follows (we refer to Theorem \ref{thm:main_proved} for the precise statement).

\begin{thm}\label{mainTheorem}
Let $H$ and $g$ satisfy suitable regularity assumptions. Let $m^{0}\in H^{s}_z(\dom)$ be given, where $s$ is an integer such that $s>d+1$. Then there exist $\e_\ast>0, \delta_\ast>0, T_\ast>0\vphantom{, K_\ast}$ such that, for all $\e\in(0, \e_\ast), \delta\in(0,\delta_\ast), 
T\in (0,T_\ast), \vphantom{K_\ast}$ the Mean Field Games system \eqref{eq:MFG} has a unique solution $(u,m) \in (C^0_t{([0,T];H^s_z(\dom))})^2$ for which $(D_v u , D_v m) \in ( L^2_t{([0,T];H^s_z(\dom;{\R^{d}}))})^2$. If in addition $s>d+2$, this solution is classical.

The parameters $(\e_\ast, \delta_\ast, T_\ast\vphantom{, K_\ast})$ can be made explicit given $H,$ $g,$ and $\| m^{0} \|_{H^s_z}$.
\end{thm}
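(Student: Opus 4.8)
The plan is to set up a fixed-point (or, better, a direct energy-estimate) argument for the forward-backward system \eqref{eq:MFG} in the space $X_T := C^0_t([0,T];H^s_z) \cap L^2_t([0,T];H^{s}_z)$ (with the extra derivative in $v$ coming from the hypoelliptic diffusion $\tfrac12\Delta_v$), working in a ball of radius $R$ around the ``trivial'' state $(u,m) = (0, e^{t\mathcal L}m^0)$, where $\mathcal L$ denotes the kinetic Kolmogorov generator $-v\cdot D_x + \tfrac12\Delta_v$. The first step is to record the linear theory: for the kinetic Kolmogorov--Fokker--Planck operator, solve the two linear equations with given right-hand sides and establish the basic energy estimate. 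Here the essential subtlety — and this is where the announced \emph{vector field method} enters — is that plain $H^s_z$ energy estimates do not close, because differentiating the transport term $v\cdot D_x$ in $v$ produces a $D_x$ term with no diffusion to absorb it; one must instead propagate control of a well-chosen family of commuting (or almost-commuting) vector fields adapted to the kinetic flow, e.g. combinations of $D_v$ and $tD_x + D_v$, exploiting that $\partial_t + v\cdot D_x$ commutes with $t D_x + $ (suitable corrections). So Step 1 is: build the functional framework in which the linearized system is well-posed with estimates uniform on $[0,T_\ast]$.

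Step 2 is the nonlinear a priori estimate. Given $(u,m)$ in the ball of radius $R$, I would plug $D_v u$ and $m$ into $H$ and $D_pH$, use the algebra and composition (Moser-type) inequalities in $H^s_z$ — valid since $s > d+1$ so $H^s_z \hookrightarrow C^1$ — to estimate $\|\e H(t,z,m,D_v u)\|_{H^s_z}$ and $\|\e\,\mathrm{div}_v(m D_pH)\|$, picking up constants of the form $\e\, C(R, \|H\|_{C^k})$. The final data term contributes $\|\delta g(z,m_T)\|_{H^s_z} \le \|\delta g(z,m^0)\|_{H^s_z} + \delta\, C(R)\,(\text{growth of }m\text{ from }m^0)$; since $g$ is \emph{non-smoothing} this is exactly the place where $\delta$ must be small — $g$ does not gain a derivative, so we cannot absorb it and must instead make its contribution small. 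Combining the linear estimate with these bounds gives $\|(u,m)\|_{X_T} \le C\big(\delta\|g(\cdot,m^0)\|_{H^s} + \|m^0\|_{H^s}\big) + C(R)\,(\e + \delta + T + \sqrt T)$, and one closes the ball by choosing $R$ of the order of the data, then $\e_\ast,\delta_\ast,T_\ast$ small enough (depending on $R$, hence on $\|m^0\|_{H^s}$ and the $C^k$-norms of $H,g$) that the last term is $\le R/2$.

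Step 3 is the contraction/uniqueness estimate: take two solutions, subtract, and run the same linear energy estimate at one derivative lower (or at the same level with the Lipschitz dependence of $H,D_pH,g$ on their arguments), obtaining $\|(u_1-u_2,m_1-m_2)\|_{X_T'} \le C(R)(\e+\delta+T+\sqrt T)\,\|(u_1-u_2,m_1-m_2)\|_{X_T'}$, so the map is a contraction under the same smallness. This yields existence and uniqueness of $(u,m)\in (C^0_t H^s_z)^2$ with $(D_v u, D_v m)\in (L^2_t H^s_z)^2$. Finally, if $s > d+2$ then $H^s_z\hookrightarrow C^2_z$, so $u$ and $m$ have enough spatial regularity, and bootstrapping via the equations gives $\partial_t u,\partial_t m$ continuous, hence the solution is classical.

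The main obstacle, as flagged, is Step 1: making the energy method actually close for the kinetic operator, i.e.\ finding the right vector fields whose $H^s$-norms can be propagated despite the degenerate (only-in-$v$) diffusion and the streaming term $v\cdot D_x$. The backward-forward coupling then has to be handled so that the ``bad'' terms generated by commuting the nonlinearity with these vector fields are either of lower order or carry a small factor ($\e$, $\delta$, or a positive power of $T$); this is the delicate bookkeeping the paper refers to as the careful combination of the energy method and the vector field method, and it is what forces the joint smallness condition on $(\e,\delta,T)$ rather than smallness of any single parameter.
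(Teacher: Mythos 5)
Your overall strategy matches the paper's: adapted vector-field energy estimates for the kinetic Kolmogorov operator, Kato--Ponce/Moser composition estimates (valid since $s > d+1 = s_\ast$), a Banach fixed-point argument on a ball whose radius is tied to $\|m^0\|_{H^s}$ under joint smallness of $(\e,\delta,T)$, and the correct observation that a non-smoothing $g$ forces $\delta$ to be small. Your diagnosis of the central difficulty is also right: the diffusion only controls $v$-derivatives, so differentiating $v\cdot D_x$ in $v$ produces a $D_x$-term with nothing to absorb it. However, the specific family of vector fields you name in Step~1 would not make the energy estimate close. You propose ``combinations of $D_v$ and $tD_x + D_v$,'' but $D_v$ does \emph{not} commute with the transport operator: $[\partial_{v_k}, v\cdot D_x] = \partial_{x_k}\neq 0$, so commuting $\partial_v^\beta$ through the equation generates exactly the uncontrollable $D_x$-terms you were trying to avoid. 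Moreover, although $D_x$ and $tD_x+D_v$ do commute with $\partial_t + v\cdot D_x$, the $t$-dependent combination $D_v = (tD_x + D_v) - t\cdot D_x$ does not, since the coefficient $t$ is hit by $\partial_t$.

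The correct commuting family, used in the paper, is $\gamma_j = \partial_{x_j}$ for $j\le d$ together with $\gamma_j = t\partial_{x_{j-d}} + \partial_{v_{j-d}}$ for $j>d$. The first set commutes with $\partial_t + v\cdot D_x$ because $v\cdot D_x$ has no explicit $x$-dependence; for the second set, $[\partial_t, t\partial_{x_j}] = \partial_{x_j}$ exactly cancels $[v\cdot D_x, \partial_{v_j}] = -\partial_{x_j}$. Once you make this substitution, your Step~1 becomes the paper's Lemma~\ref{lem:apriori} and Corollary~\ref{cor:apriori-higher} phrased in the modified norms $\|\cdot\|_{\Gamma^s(t)}$, and your Steps~2--3 (composition estimates, contraction, bootstrap to a classical solution when $s>d+2$) proceed as you sketch. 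A smaller point: the function space you write, $C^0_t H^s_z \cap L^2_t H^s_z$, is redundant as stated; what is actually needed is $w\in C^0_t H^s_z$ together with $D_v w\in L^2_t H^s_z$, which is the paper's space $X^s$, and it is precisely the $L^2$-in-time control of the \emph{extra} $v$-derivative that lets the composition estimates for $H$ and $D_pH$ close without losing a derivative on $u$.
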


To prove Theorem \ref{mainTheorem} we proceed along a somewhat similar program as for the proof of existence and uniqueness of solutions to non-kinetic non-separable mean field games in \cite{ambroseMFG4}. However, along the way we need to develop new techniques to account for additional difficulties.

A central issue is the differing roles of derivatives in the $v$-variables versus those with respect to the $x$-variables: the diffusion is degenerate, acting in the $v$ directions only, while $x$ derivatives $D_x$ appear only through the transport operator $\partial_t + v \cdot D_x$. 
In order to obtain suitable energy estimates on solutions we therefore require new methods compared to \cite{ambroseMFG4}.
The specific technique that we use is to measure $H^s$ regularity in $z$ using a modified norm, which varies over time $t$ in a manner adapted to the transport---see Subsection~\ref{sec:transport}. 
The use of such modified quantities (variously described depending on context as energies, entropies, or Lyapunov functionals) is long established and ubiquitously used in the literature on kinetic equations to obtain well-posedness results and regularization and decay estimates (see e.g. \cite{Kawashima, Villani, HerauFP, HerauNier}).
Our presentation is phrased in terms of a family of vector fields that commute with the transport operator, taking inspiration from the approach of \cite{Smulevici} to the study of small data solutions for 
for the Vlasov--Poisson system by an adaptation to the setting of transport equations of the vector field method for wave equations \cite{Klainerman}; see also
\cite{FJS, Luk, Chaturvedi, ChaturvediLukNguyen, Bigorgne, BCZD} for applications of vector field methods to stability/decay results for other kinetic models from mathematical physics.

The spatial domain in \cite{ambroseMFG4} was taken to be the torus, which is of course compact; this provided the possibility
of using compactness to prove convergence of an iterative scheme and construct solutions. In the present work, however, the velocity
variable $v$ cannot be taken to be from a compact domain. We therefore replace the compactness argument with a Banach fixed point argument; we use energy estimates to find a smallness condition such that a contraction mapping exists between suitably chosen function spaces.
Moreover, we are required to prove additional estimates as compared to \cite{ambroseMFG4}, to ensure that the solutions are controlled at large velocities.

\begin{rmk}
\begin{enumerate}
\item In this paper we consider Hamiltonians and final data functions that depend locally on the density variable. Our approach, however, under suitable similar assumptions would also result in a well-posedness theory in cases where the data involve nonlocal (or both local and nonlocal) dependence on the measure variable. See Subsection~\ref{sec:MFGEstimates} for further discussion on this point.
\item We chose the differential operator present in \eqref{eq:MFG}, as this arises in models where agents control their acceleration. However, the very same techniques that we develop here would apply to a more general class of MFGs systems.

For example, our approach can immediately be applied to models in which players' individual control system is a linear system of the form
\be \label{eq:SDEGeneral}
\dd Z_t = (B Z_t + \Pi \alpha_t ) \dd t + \sigma \dd W_t ,
\ee
where the state variable $Z_t \in \R^n$, $\alpha_t \in \R^n$ represents the control variable, $W$ is an $n$-dimensional Brownian motion, and
$B, \Pi, \sigma \in {\R^{n \times n}}$ are constant matrices, such that $\Pi$ is of rank $k$ (with possibly $k < n$), and $\sigma$ is such that $| \sigma^\top \xi |^2 \geq c_0 |\Pi^\top \xi|^2$ for some $c_0 > 0$ for all $\xi \in \R^n$. This corresponds to a MFG system of the form
\be
\begin{cases}
- \partial_t u - (Bz) \cdot D_z u - \frac{1}{2} \sigma \sigma^\top : D_z^2 u + H(t,z,m, \Pi^\top D_z u)  = 0, \\[5pt]
\partial_t m + \div_z ( Bz m ) - \frac{1}{2} D_z^2 : (\sigma \sigma^\top m ) + \div_z (m \Pi D_p H(t,z,m, \Pi^\top D_z u)  )  = 0 ,
\end{cases}
\ee
supplemented with suitable initial and final conditions.
It would also be possible to generalise the method to general drifts and non-constant diffusion matrices.
The key condition is that the diffusion term should be coercive in the same directions of $D_z$ as those upon which the Hamiltonian $H$ depends.

We emphasise that here we do not impose any hypoellipticity condition on the diffusion operator (i.e. (parabolic) H\"ormander/Kalman rank condition), although for the kinetic case $\partial_t + v \cdot D_x - \frac{1}{2} \Delta_v$ this is satisfied.
It has been known since the celebrated work of H\"ormander \cite{Hormander} that diffusion equations of the form 
\be \label{eq:diffusion}
\partial_t m - V_0 m - \frac{1}{2} \sum_{i=1}^l V_i^2 m = f, \qquad z \in \R^n,
\ee
where $V_0, \ldots, V_l$ are $C^1$ vector fields, enjoy smoothing properties under a non-degeneracy condition on the vector fields. Namely, defining
\be
\mc{V}_0 : = \left \{ V_1, \ldots, V_l \right \}, \quad \mc{V}_j : = \mc{V}_{j-1} \cup \left \{ [W, V_j] : j \geq 0, W \in \mc{V}_{j-1} \right \},
\ee
the diffusion equation \eqref{eq:diffusion} is hypoelliptic if
the vector fields $\bigcup_j \mc{V}_j$ span $\R^n$ at every point $z$ of the state space. 
In the constant coefficient case \eqref{eq:SDEGeneral}, this is equivalent to the \emph{Kalman rank condition} that the matrix $(\Pi, B \Pi, \ldots B^{n-1} \Pi)$ is of full rank $n$.

The gain of regularity in $m$ compared to $f$ varies depending on the direction in which derivatives are taken, according to the type of commutators required to produce that direction \cite{RothschildStein}. In particular, directions involving commutators with the drift field $V_0$ gain strictly less than one derivative.
For example, in the case of the operator $\partial_t + v \cdot D_x - \frac{1}{2}\Delta_v$ appearing in the MFG system \eqref{eq:MFG},
we would expect to have at most $H^{s + 1/3}_x$ estimates for a solution with $H^s_{x,v}$ data and $H^s_xH^{s-1}_v$ source \cite{FollandStein, RothschildStein, Bouchut, NiebelZacher}.
On the other hand, since the Hamiltonian in a Mean Field Game would not be expected to be regularising with respect to the momentum variable, the nonlinearities in the corresponding Mean Field Games system lose a full derivative $\Pi^\top D_z u$ on the value function in the control directions. This loss would not be compensated by the fractional gain of regularity arising from the hypoellipticity property. 
For this reason we impose the restriction that the control directions and diffusive directions should be aligned.

It is interesting to notice that in \cite{DraFel}, where H\"ormander-type operators with no drift $V_0 \equiv 0$ are considered, the authors similarly assume that the Hamiltonian depends on a subgradient given by the $\mc{V}_0$ directions. Although this philosophy is similar in spirit to ours, \cite{DraFel} used completely different techniques from ours, as the model problem there is time independent and the dependence of the Hamiltonian on the measure variable was assumed to be regularizing.
\item At no point in our analysis do we really use the fact that the Hamiltonian $H$ appearing in the HJB equation is necessarily related to the drift involving $D_{p}H$ in the KFP equation. Indeed, the very same techniques developed in this paper apply to systems of the form  
\be
\begin{cases}
- \partial_t u - (Bz) \cdot D_z u - \frac{1}{2} \sigma \sigma^\top : D_z^2 u + \mc{H} [m, \Pi^\top D_z u ]  = 0, \\
\partial_t m + \div_z ( Bz m ) - \frac{1}{2} D_z^2 : (\sigma \sigma^\top m ) + \div_z (\Pi \mc{J} [m, \Pi^\top D_z u ])  = 0 ,
\end{cases}
\ee
equipped with suitable initial and terminal conditions. Here $\mc{H}$ is a scalar valued operator, while $\mc{J}$ is a vector valued operator, acting between suitable function spaces, and satisfying our standing assumptions. Such systems should remind the reader of the so-called {\it extended mean field games}, introduced recently in \cite{LioSou} (see also \cite{Mun,MouZha}). Therefore, our main results hold true for a suitable class of extended MFG models, involving a general class of drift-diffusion operators described above. Interestingly, \cite{DraFel} also studied in fact an extended MFG system, as there also the Hamiltonian is not necessarily related to the drift operator appearing in the KFP equation.
\item We would like to emphasize once more that it is purely for pedagogical considerations that we restrict our study to systems of the form \eqref{eq:MFG}. It would be without too much philosophical difficulty (but at the price of a much heavier technical language) to extend our main results to more general systems described above. At some crucial places in our analysis below, we have emphasized what would change if one would consider a more general scenario, i.e. when involving state equations of the form \eqref{eq:SDEGeneral}.

\end{enumerate}
\end{rmk}

\medskip

The plan of the paper is as follows. In Section \ref{preliminarySection} we begin by listing some notation, our standing assumptions, and other preliminaries including definitions of function spaces and norms and useful estimates. These estimates include bounds related to the method of vector fields we use, as well as technical composition estimates in Sobolev spaces. These will serve as building blocks for the steps leading to the proof of our main theorem. In Section \ref{linearSection}, we provide a number of estimates for a prototypical linear degenerate diffusion equation (with source), which will be used later to define a map that we will employ in a fixed point argument. Specifically, in Subsection \ref{linearEquation} we provide estimates for solutions of a single initial value problem, and the corresponding terminal value problem. Then, in Subsection~\ref{sec:MFGEstimates}
we set up
a system of linear equations with source
whose solution map will form the basis of our fixed point argument. 
In Section~\ref{sec:FixedPoint} we show that, for a suitable range of the parameters $(\delta, \e, T)$ this map is indeed a contraction on a complete metric space, proving our main theorem,  Theorem \ref{mainTheorem}, on existence and uniqueness of solutions of \eqref{eq:MFG}.  Finally, in Appendix \ref{appendixSection} we give the technical proof of an important estimate.

\section{Standing assumptions and preliminaries}\label{preliminarySection}

We use the notation $s_\ast := d+1$ to denote the critical (integer) exponent $s_\ast$ such that 
$H^s_z(\dom)$ embeds continuously into $L^\infty(\dom)$ for all $s \geq s_\ast$.

We impose the following standing assumptions throughout the paper.

\begin{hyp} \label{hyp:main}
For some integer $s \geq s_\ast + 1$,
$H$ is a continuous function of all arguments, and a $C^{s+2}$ function of $(z,m,p)$.
There exists a non-decreasing function $ \Phi_H:[0,+\infty)\to[0,+\infty)$ such that, for any $r > 0$ and $t \in [0,T]$,
\be \label{eq:hyp:main-unif}
\| H(t, \cdot, \cdot , \cdot ) \|_{C^{s+2} (\dom \times B_r(0))} \leq \Phi_H(r)
\ee
and, for  all multi-indices $\alpha$ with $|\alpha| \leq s$ and any functions $(m,p) \in (L^2\cap L^\infty)(\dom ; \R \times \R^{d})$,
\be \label{eq:hyp:main-L2}
\sup_{t \in [0,T]} \| \partial^\alpha_z H(t, \cdot, m, p) \|_{L^2_z} \leq \Phi_H \left ( \| (m,p) \|_{L^2_z \cap L^\infty_z} \right ) .
\ee
\end{hyp}

We also assume the following on the terminal coupling $g$.

\begin{hyp} \label{hyp:terminal}
For the same $s\ge s_\ast+1$ as above, $g : \dom \times [0,+\infty) \to \R$ is a $C^{s+1}$ function of all arguments, for which there exists a non-decreasing function $\Phi_g : [0,+\infty) \to [0,+\infty)$ such that, for any $r > 0$,
\be
\| g \|_{C^{s+1}(\dom \times B_r(0))} \leq \Phi_g (r),
\ee
and for  all multi-indices $\alpha$ with $|\alpha| \leq s$ and any function $m \in (L^2\cap L^\infty)(\dom)$,
\be \label{eq:hyp:g-L2}
\| \partial_{z}^\alpha g(\cdot, m) \|_{L^2_z} \leq \Phi_g \left ( \| m \|_{L^2_{z} \cap L_{z}^\infty} \right ) .
\ee

\end{hyp}

{

\begin{rmk}
The $L^2$ type condition \eqref{eq:hyp:main-L2} is satisfied for example if 
\be
 | \partial^\alpha_z H(t, z, m, p) |  \lesssim | (m, p)| \; \text{as} \; (m, p) \to 0  .
\ee
Since $H$ is $C^{s+2}$ with the uniform-in-$z$ bounds \eqref{eq:hyp:main-unif}, this is equivalent to asking that 
\be
 | \partial^\alpha_z H(t, z, 0, 0)  |  \equiv 0 ,
\ee
which reduces to 
\be \label{hyp:H0}
H(t,z, 0, 0) \equiv 0 .
\ee

Similarly, the condition 
\be \label{hyp:g0}
g(z, 0) \equiv 0, 
\ee
suffices to ensure that \eqref{eq:hyp:g-L2} is satisfied.

Moreover, it suffices for conditions \eqref{hyp:H0}-\eqref{hyp:g0} to hold only outside of a compact set in $z$.
\end{rmk}

\begin{rmk} \label{rmk:examples}
Examples of admissible Hamiltonians include functions of the form $H(t,z, m, p) = a(t,z)f(m, p)$, where $a \in C^0_t \left ( C^{s+2}_z \cap H^s_z \right )$, and $f$ is any $C^{s+2}$ function. Functions $H$ depending on $m$ and $p$ only and satisfying \eqref{hyp:H0} are also included.
For example, we would like to be able to include
\begin{equation}\label{almostExample}
H(m,p) = \frac{|p|^2}{a + m}, \qquad \text{for any} \; a > 0 .
\end{equation}
However until we know that $m>-a,$ this is not sufficiently smooth.
We therefore first let $\phi_{a}$ be a $C^{\infty}$ function such that
$\phi_{a}(\omega)=\omega$ for all $\omega\geq0,$ and $\phi_{a}(\omega)>-\frac{a}{2}$ for all $\omega.$
We then modify \eqref{almostExample} to
\begin{equation}\label{cutoffExample}
H(m,p) = \frac{|p|^{2}}{a+\phi_{a}(m)}.
\end{equation}
This now satisfies the assumptions.  Furthermore, if the solution $m$ which we prove to exist is in fact a probability distribution (so that $m\geq0$),
then \eqref{cutoffExample} reduces to \eqref{almostExample}.  As we will comment in  Remark \ref{probabilityRemark} below, it is indeed the case that
if $m_{0}$ is a probability distribution, then the solution we prove to exist will remain a probability distribution at later times.
\end{rmk}
}

In the rest of this section, we will give results on vector fields, function spaces, and basic estimates which will be useful many times throughout the remainder of the paper.

\subsection{Function spaces}
We make use of the usual $L^p$ spaces over $[0,T] \times \dom$, with norm
\be
\| f \|_{L^p}  = \left ( \int_0^T \int_{\dom} |f(t,x,v)|^p \dd x \dd v \dd t \right )^{1/p}.
\ee
Subscripts indicate $L^p$ norms taken with respect to only some of the variables, e.g.
\be
\| f(t, \cdot) \|_{L^p_z}  = \left (  \int_{\dom} |f(t,x,v)|^p \dd x \dd v \right )^{1/p} ,
\ee
where we have used the shorthand $z = (x, v) \in \dom$, as well as mixed $L^p$ norms, with different orders of integration in different variables. For example,
\be
\| f \|_{L^\infty_t L^2_z} : = \sup_{t \in [0,T]} \left ( \int_{\dom} |f(t,x,v)|^2 \dd x \dd v \right )^{1/2} .
\ee

We denote by $\dot H^s_z$ the homogeneous Sobolev spaces of functions having all $s$-order derivatives with respect to the variable $z$ in $L^2$, equipped with the semi-norm $\| \cdot \|_{\dot H^s}$ defined as follows:
\be
\| f \|_{\dot H^s_z}^2 : = \sum_{|\beta| = s} \| \partial^\beta_z f \|_{L^2_z}^2 .
\ee
The non-homogeneous Sobolev spaces of $L^2$ functions with all $z$-derivatives of order less than or equal to $s$ will be denoted by $H^s_z$. We will use the norm $\| \cdot \|_{H^s_z}$ defined by
\be
\| f \|_{ H^s_z}^2 : = \| f \|_{L^2_z}^2 + \| f \|_{\dot H^s_z}^2,
\ee
which by interpolation is equivalent to the usual one involving a sum over derivatives of all admissible orders.

Similarly, $\dot H^s_x$, $H^s_v$ etc. will denote the corresponding Sobolev spaces where derivatives are taken with respect to, respectively, the $x$ or $v$ variables only.

We will use similar shorthand notations for mixed spaces, for example 
\be
L^2_t H^s_z : = L^2_t ((0,T); H^s_z(\dom)).
\ee 
For functions with values in spaces of dimension higher than one, we use abbreviations of the form $(L^2_t H^s_z)^l : = L^2_t ((0,T); H^s_z(\dom ; \R^l)) $ (here $l \in \NN$).

When there is no ambiguity involved, especially for functions which do not depend on the time variable, we use interchangeably the notation $L^p$ and $L^p_z=L^p_{(x,v)}$, and similarly in the case of the corresponding Sobolev spaces.

Let $X$, $Y$ be normed spaces. The notation $X \cap Y$ denotes the intersection space (see e.g. \cite{Bergh-Lofstrom}), with the norm $\| \cdot \|_{X \cap Y}$ being defined by
\be
\| f \|_{X \cap Y} = \| f \|_X + \| f \|_Y .
\ee

The notation $\iota_{\Omega}: \Omega \to \Omega$ always denotes the identity map $\iota_\Omega(\omega) = \omega, \; \forall \omega \in \Omega$, for any set $\Omega$. When there is no ambiguity, we drop the subscript $\Omega$ in $\iota_\Omega$.

\subsection{Tools for the Transport Operator} \label{sec:transport}

In this subsection, we will introduce the key tools used in our analysis to handle the kinetic transport operator $\partial_t + v \cdot D_x$: a basis of \emph{vector fields}, chosen so as to commute with $\partial_t + v \cdot D_x$, and the \emph{flow map} describing the transport.

\begin{definition}[Vector fields]
For $j = 1, \ldots d$, let
\be
\gamma_j = \partial_{x_j} .
\ee
For $j = d+1, \ldots, 2d$, let
\be
\gamma_j = t \partial_{x_{j-d}} + \partial_{v_{j-d}} .
\ee 
In other words, the column vector $\gamma$ with entries $(\gamma_j)_{j=1}^{2d}$ can be expressed in the form
\be
\gamma = 
\begin{pmatrix}
I_d & 0_d \\
t I_d & I_d
\end{pmatrix}
D_z,
\ee
where $I_d$ denotes the identity matrix in dimension $d$, and $0_d$ is the zero matrix in dimension $d$. {Sometimes we write explicitly the dependence of $\gamma$ on $t$, as $\gamma(t)$.}
\end{definition}

\begin{rmk}
In the case of the general constant coefficient control system \eqref{eq:SDEGeneral}, one would consider the differential operators given by the entries of the column vector $e^{tB^\top} D_z$, where $e^{tB^\top}$ is the matrix such that
\be
\frac{\dd}{\dd t} e^{tB^\top} = B^\top e^{tB^\top}, \qquad e^{tB^\top} \vert_{t=0} = I .
\ee
\end{rmk}

\begin{rmk} \label{rmk:commute}
Observe that the following commutation relations hold. All $\gamma_j$ commute with each other:
\be \label{eq:gamma-commutes}
\left[ \gamma_j, \gamma_k \right ] = 0 \qquad \text{for all} \; j,k .
\ee
Moreover, they commute with pure $v$-derivatives, and hence in particular with the Laplacian $\Delta_v$:
\be
\left[ \gamma_j, \partial_{v_k} \right ], = 0 \qquad \left[ \gamma_j, \Delta_{v} \right ] = 0, \qquad \text{for all} \; j,k . 
\ee
Finally, each $\gamma_j$ commutes with the transport operator:
\be
\left[ \gamma_j, \partial_t + v \cdot D_x \right ] = 0 \qquad \text{for all} \; j .
\ee
\end{rmk}

We will use multi-index notation to denote compositions of the $\{ \gamma_j \}_{j=1}^{2d}$, in the usual way: for a multi-index $\beta \in \NN^{2d}$, let
\be \label{def:gamma-beta}
\gamma^\beta = \gamma_1^{\beta_1} \ldots \gamma^{\beta_{2d}}_{2d}
\ee
Since the operators $\{ \gamma_j \}_{j=1}^{2d}$ commute \eqref{eq:gamma-commutes}, all finite compositions thereof can be expressed in the form \eqref{def:gamma-beta}.

We note the following relations, which we will use to convert between Sobolev estimates and estimates in terms of $\gamma^\beta$.

\begin{lemma} \label{lem:VF-conversion}
For any $s \in \mb{N}$ there exists $C_{s,d}>0$ such that for all $f\in \dot H^s_{z}(\dom)$ we have the following two inequalities
\be
\| f \|_{\dot H^s_{z}} \leq C_{s,d} (1+t)^{s} \left ( \sum_{|\beta| = s} \| \gamma^\beta f \|^2_{L^2_{z}} \right )^{1/2}, \quad \left ( \sum_{|\beta| = s} \| \gamma^\beta f \|_{L^2_{z}}^2 \right )^{1/2} \leq C_{s,d} (1+t)^{s} \| f \|_{\dot H^s_{z}} .
\ee

\end{lemma}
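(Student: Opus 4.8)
The plan is to prove the two inequalities by expressing the relationship between $D_z$ and the vector fields $\gamma$ in matrix form and then bounding the operator norms of the relevant transition matrices. Recall that $\gamma = M(t) D_z$, where
\[
M(t) = \begin{pmatrix} I_d & 0_d \\ t I_d & I_d \end{pmatrix},
\]
so that, conversely, $D_z = M(t)^{-1} \gamma$ with
\[
M(t)^{-1} = \begin{pmatrix} I_d & 0_d \\ -t I_d & I_d \end{pmatrix}.
\]
Both $M(t)$ and $M(t)^{-1}$ have entries bounded by $1+t$ in absolute value, hence their operator norms (say, as maps on $\R^{2d}$) are bounded by $C_d(1+t)$ for a dimensional constant $C_d$.

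\medskip

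First I would treat the case $s=1$. Writing $\gamma_j f = \sum_k M_{jk}(t) \partial_{z_k} f$ and $\partial_{z_k} f = \sum_j (M^{-1})_{kj}(t) \gamma_j f$ pointwise in $z$, I would square, sum over the free index, apply Cauchy--Schwarz (with the constant absorbed into $C_d$), and integrate in $z$ to get
\[
\Bigl( \sum_{k} \| \partial_{z_k} f \|_{L^2_z}^2 \Bigr)^{1/2} \leq C_d (1+t) \Bigl( \sum_{j} \| \gamma_j f \|_{L^2_z}^2 \Bigr)^{1/2},
\]
and symmetrically with the roles reversed. This is exactly the claimed estimate for $s=1$ (with $(1+t)^1$).

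\medskip

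For general $s$, the key observation is that a composition $\gamma^\beta$ with $|\beta| = s$ is, by linearity and the commutation relation \eqref{eq:gamma-commutes}, a linear combination with constant (in $z$, though $t$-dependent) coefficients of the monomials $\partial_z^\alpha$ with $|\alpha| = s$; indeed $\gamma^\beta = \prod_j \bigl( \sum_k M_{jk}(t) \partial_{z_k} \bigr)^{\beta_j}$, and expanding this product of commuting first-order operators yields such a combination whose coefficients are degree-$s$ polynomials in the entries of $M(t)$, hence bounded by $C_{s,d}(1+t)^s$. Conversely, since $\partial_z^\alpha = \prod_k \bigl( \sum_j (M^{-1})_{kj}(t) \gamma_j \bigr)^{\alpha_k}$, each $\partial_z^\alpha$ with $|\alpha|=s$ is likewise a constant-coefficient combination of the $\gamma^\beta$ with $|\beta|=s$, with coefficients bounded by $C_{s,d}(1+t)^s$. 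Taking $L^2_z$ norms, using the triangle inequality and Cauchy--Schwarz over the (finitely many, dimensionally bounded number of) terms, and then summing the squares over the free multi-index gives both inequalities. Alternatively, and perhaps more cleanly, one can induct on $s$: apply the $s=1$ bound to each of the functions $\partial_z^{\alpha'} f$ (respectively $\gamma^{\beta'} f$) with $|\alpha'| = s-1$, using that $\gamma_j$ commutes with all $\partial_{z_k}$ up to lower-order... — more precisely, using that $[\gamma_j,\partial_{z_k}]=0$ so that $\gamma^\beta$ and $\partial_z^\alpha$ interleave freely — and collect the factors $(1+t)^{s-1}\cdot(1+t) = (1+t)^s$.

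\medskip

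The argument is essentially bookkeeping; there is no serious obstacle. The only point requiring mild care is tracking that the combinatorial/dimensional constants do not blow up — i.e. verifying that the number of monomials appearing in the expansion of $\gamma^\beta$ (or in the inverse direction) is bounded by a constant depending only on $s$ and $d$, so that the Cauchy--Schwarz losses are harmless — and making sure the exponent of $(1+t)$ comes out exactly $s$ rather than something larger, which it does because $M(t)$ has polynomial entries of degree at most $1$ and we compose $s$ of them.
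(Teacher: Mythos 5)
Your proposal is correct and follows essentially the same route as the paper: express $\gamma(t)=A(t)D_z$ with $A(t)=\begin{pmatrix} I_d & 0_d \\ t I_d & I_d\end{pmatrix}$, note that $A(t)$ and $A(t)^{-1}$ have norm at most $1+t$, and iterate/expand $s$ times (legitimately, since the coefficients are $z$-independent and all the first-order operators commute) to pick up the factor $(1+t)^s$ in each direction. The paper's version is just a terser form of your expansion step, bounding $|(A(t)D_z)^\beta f|$ directly by $\|A(t)\|^s\sum_{|\beta'|=s}|\partial^{\beta'}f|$; no substantive difference.
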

\begin{proof}

Writing
\be
A(t) : = 
\begin{pmatrix}
I_d & 0_d \\
t I_d & I_d
\end{pmatrix},
\ee
we have $\gamma (t) = A(t) D_z$. Thus, for any multi-index $\beta$ of order $|\beta| = s$,
\be
| \gamma^\beta(t) f | = |(A(t) D_z)^\beta f| \leq \| A(t) \|^s \sum_{|\beta '| = s} |\partial^{\beta '} f| \leq (1+t)^s \sum_{|\beta '| = s} |\partial^{\beta '} f| .
\ee
Hence
\be
\| \gamma^\beta(t) f \|_{L^2_z} \leq C_{s,d} (1+t)^s \| f \|_{\dot H^s_z} .
\ee
For the reverse inequality, we note that $D_z = A^{-1}(t) \gamma(t)$, and that $\| A^{-1}(t) \| \le (1+t)$. The estimate therefore follows by the same argument.

\end{proof}

We may therefore use the operators $\gamma$ to define (semi-)norms for the Sobolev spaces $H^s_z$ and $\dot H^s_z$ that are adapted to the transport flow. 

\begin{definition}
Let $t \in [0,T]$.
The semi-norm $\| \cdot \|_{\dot \Gamma^s(t)}$ is defined, for any function $f \in \dot H^s(\dom)$, by
\be
\| f \|_{\dot \Gamma^s(t)}^2 : = \sum_{|\beta| = s} \| \gamma^\beta(t) f \|_{L^2_z}^2 .
\ee
The norm $\| \cdot \|_{\Gamma^s(t)}$ is defined, for any function $f \in H^s(\dom)$,
by
\be
\| f \|_{\Gamma^s(t)}^2 : = \| f \|_{\dot \Gamma^s(t)}^2 + \| f \|_{L^2_z}^2 .
\ee
\end{definition}

Note that for each $t \in [0,T]$, by Lemma~\ref{lem:VF-conversion}, $\| \cdot \|_{\dot \Gamma^s(t)}$ and $\| \cdot \|_{ \Gamma^s(t)}$ are equivalent respectively to the standard (semi)-norms $\| \cdot \|_{\dot H^s}$ and $\| \cdot \|_{H^s}$. \\

In the next subsection, we will consider Sobolev estimates for compositions of functions. It will be useful for later sections to phrase these estimates in terms of the modified norms $\| \cdot \|_{\Gamma^s}$. 
For this purpose it will be convenient for us to make use of the transport \emph{flow map}.

\begin{definition} \label{def:FlowMap}
The \emph{flow map} $\phi : [0,T] \times \dom \to \dom$ is defined such that, writing $\phi = (\phi_X, \phi_V)$
\be
\partial_t \phi_X = \phi_V, \; \partial_t \phi_V = 0; \qquad \phi(0,z) = z, \; \forall z \in \dom .
\ee
Explicitly, 
\be
\phi(t,x,v) = (x+vt, v) .
\ee
\end{definition}

\begin{rmk}
The key property of $\phi$ that we will use is that, for any $f \in C^1(\dom)$,
\be
D_z (f \circ \phi) = (\gamma f) \circ \phi .
\ee
We note moreover that $\phi(t, \cdot) : \dom \to \dom$ is a diffeomorphism for all $t \in[0,T]$, with $\det D_z {\phi(t, \cdot)} \equiv 1$.

Hence
\be
\| f \|_{\dot \Gamma^s(t)}^2 = \sum_{|\beta| = s} \| [\partial^\beta (f \circ \phi(t, \cdot))] \circ \phi^{-1}(t, \cdot) \|_{L^2}^2 = \| f \circ \phi(t, \cdot) \|_{\dot H^s_z}^2 ,
\ee
and thus also
\be \label{eq:FlowSobolev}
\| f \|_{\Gamma^s(t)} = \| f \circ \phi(t, \cdot) \|_{H^s_z} .
\ee
\end{rmk}

\begin{rmk}
In the case of the general constant coefficient control system \eqref{eq:SDEGeneral}, one would consider the flow map satisfying
\be
\partial_t \phi = B \phi ; \qquad \phi(0,z) = z, \; \; \forall z \in \dom ,
\ee
i.e. $\phi(t,z) = e^{tB} z$.
\end{rmk}

\subsection{Sobolev Estimates for Composite Functions}

The goal of this subsection is to establish Proposition~\ref{prop:HRegularity} below,
showing that the operations of composition with the Hamiltonian $H$ and terminal coupling $g$
define continuous and bounded maps between suitable functional spaces. 

For convenience we introduce the following shorthand notation for the function 
$$(t,z,m,p)\mapsto m D_p H(t,z, m, p),$$ which we will use globally throughout the paper.

\begin{definition} \label{def:J}
The function 
$J:(0,T)\times\dom\times\R\times\R^d\to \R^d$ is defined by
\be
J(t, z,m,p) : = m D_p H(t, z, m, p).
\ee

\end{definition}

\begin{definition}[Composition operators] \label{def:Ops}
The (nonlinear) operators $\mc{H}$, $\mc{J}$ are defined for functions $m : [0,T] \times \dom \to \R$ and $p : [0,T] \times \dom \to \R^d$ by the formal expressions
\be
\mc{H} [m, p] : = H \circ (\iota_{[0,T] \times \dom}, m, p), \qquad \mc{J} [m, p] : = J \circ (\iota_{[0,T] \times \dom}, m, p).
\ee
The operator $\mc{G}$ is defined for functions $\mu : \dom \to \R$ by
\be
\mc{G} [ \mu ] : = g \circ (\iota_\dom, \mu) .
\ee
\end{definition}

We will show that under Assumption~\ref{hyp:terminal} the operator $\mc{G}$ is Lipschitz continuous and bounded on bounded subsets of $H^s_z$, and that under Assumption~\ref{hyp:main}, the operators $\mc{H}$, $\mc{J}$ are Lipschitz continuous and bounded on bounded subsets of the function space $L^\infty_t H^{s-1}_z \cap L^2_t H^s_z$. The choice of this function space is suggested by the regularization properties of the kinetic diffusion operator $\partial_t + v \cdot D_x - \frac{1}{2} \Delta_v$.
The key point is that the highest order derivatives in $z$ ($H^s_z$) require merely an $L^2_t$ bound with respect to time, while only derivatives of strictly lower order ($H^{s-1}_z$) require an $L^\infty_t$ control. This reflects the fact that we expect a gain of one derivative in $v$ from the diffusion, but in an $L^2_t$ sense (see Section~\ref{linearSection}).
It is working in this function space, therefore, that will allow us to compensate the loss of a $v$ derivative in $u$ inside the nonlinearities $H$ and $D_p H$ by the regularizing effect of the degenerate diffusion.

\begin{prop} \label{prop:HRegularity}
Let $s \geq s_\ast + 1$.
\begin{enumerate}[(i)]
\item Let $H$ satisfy Assumption~\ref{hyp:main}. 
The operators $\mc{H}$ and $\mc{J}$ given by the expressions in Definition~\ref{def:Ops} are well-defined as maps from $(L^\infty_t H^{s-1}_z \cap L^2_t H^s_z)^{d+1}$ into, respectively, $L^2_t H^s_z$ and $(L^2_t H^s_z)^d$.
Moreover these maps are bounded and Lipschitz continuous on bounded sets.

More precisely, there exists a non-decreasing function $F:[0,+\infty)\to[0,+\infty)$ such that, 
for any $(m,p) \in (L^2_t H^s_z \cap L^\infty_t H^{s-1}_z)^{d+1}$,
\be
\| \mc{H} [m, p] \|_{L^2_t \Gamma^s_z}^2 + \| \mc{J} [m, p] \|_{L^2_t \Gamma^s_z}^2 \leq \left(T + \| (m, p) \|_{L^2_t \Gamma^s_z}^2\right) \, (1+T)^{2s} \, F \left ( \| (m,p) \|_{L^\infty_t \Gamma^{s-1}_z}^2 \right ),
\ee
and for any $(m_i ,p_i) \in (L^2_t H^s_z \cap L^\infty_t H^{s-1}_z)^{d+1}$, $i=1,2$,
\begin{align}
& \| \mc{H} [m_1, p_1] -  \mc{H} [m_2, p_2]  \|_{L^2_t \Gamma^s_z}^2 + \| \mc{J} [m_1, p_1] -  \mc{J} [m_2, p_2]  \|_{L^2_t \Gamma^s_z}^2 
\leq 
 (1+T)^{2s} \, F \left ( \| (m,p) \|_{L^\infty_t \Gamma^{s-1}_z}^2 \right ) \\
& \qquad \times \left ( \| (m_1 - m_2, p_1 - p_2) \|_{L^2_t \Gamma^s_z}^2 + \| (m_1 - m_2, p_1 - p_2) \|_{L^\infty_t \Gamma^{s-1}_z}^2 (T + \max_i \| (m_i, p_i) \|_{L^2_t \Gamma^s_z}^2)  \right ) .
\end{align}

\item Let $g$ satisfy Assumption~\ref{hyp:terminal}.
The operator $\mc{G}$ given by the expression in Definition~\ref{def:Ops} is well-defined as a map from $H^s_z$ into $H^s_z$, and is bounded and Lipschitz continuous on bounded subsets of $H^s_z$. 

More precisely, there exists a non-decreasing function $G :[0,+\infty)\to[0,+\infty)$ such that, for any $\mu \in H^s_z$,
\be
\| \mc{G} [\mu] \|_{\Gamma^s(T)}^2 \leq (1+T)^{2s} \, G \left ( \| \mu \|_{\Gamma^s(T)}^2 \right ) ,
\ee
and for any $\mu_i \in H^s_z$, $i = 1,2$,
\be
\| \mc{G} [\mu_1] - \mc{G} [\mu_2] \|_{\Gamma^s(T)}^2 \leq (1+T)^{2s} \, G \left ( \| \mu \|_{\Gamma^s(T)}^2 \right )  \, \| \mu_1 - \mu_2 \|_{\Gamma^s(T)}^2 .
\ee

\end{enumerate}
\end{prop}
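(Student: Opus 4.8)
The plan is to reduce the claimed estimates, which are stated in the transport-adapted norms $\Gamma^s$, to classical \emph{tame} composition (Moser-type) estimates in the ordinary Sobolev spaces $H^s_z(\dom)$, using the flow map $\phi$ as a dictionary between the two settings. For the reduction: by \eqref{eq:FlowSobolev} and the fact that each $\phi(t,\cdot)$ is a measure-preserving diffeomorphism of $\dom$, one has $\|\mc{H}[m,p](t,\cdot)\|_{\Gamma^s(t)}=\|(\mc{H}[m,p](t,\cdot))\circ\phi(t,\cdot)\|_{H^s_z}$ and $\|m(t,\cdot)\circ\phi(t,\cdot)\|_{H^k_z}=\|m(t,\cdot)\|_{\Gamma^k(t)}$ for every $k$; since composition with $\phi$ acts only in the $z$-slot, writing $H_\phi(t,z,\mu,q):=H(t,\phi(t,z),\mu,q)$ gives
\[
(\mc{H}[m,p](t,\cdot))\circ\phi(t,\cdot)=H_\phi\big(t,\cdot,\ m(t,\cdot)\circ\phi(t,\cdot),\ p(t,\cdot)\circ\phi(t,\cdot)\big),
\]
and similarly at time $T$ for $\mc{J}$ and $\mc{G}$, with $J_\phi(t,z,\mu,q):=\mu D_qH_\phi(t,z,\mu,q)$ and $g_\phi(z,\mu):=g(\phi(T,z),\mu)$. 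Since each $\phi(t,\cdot)$ is affine in $z$ with $\|D_z\phi(t,\cdot)\|,\|D_z\phi(t,\cdot)^{-1}\|\le 1+t$ (as in Lemma~\ref{lem:VF-conversion}), the chain rule together with the change of variables $z\mapsto\phi(t,z)$ shows that $H_\phi$, $J_\phi$, $g_\phi$ satisfy Assumptions~\ref{hyp:main}--\ref{hyp:terminal} with $\Phi_H$, $\Phi_g$ enlarged by a factor $C_{s,d}(1+t)^{s+2}$; in particular the $L^2$-bound of type \eqref{eq:hyp:main-L2} for $J_\phi$ comes for free from the prefactor $\mu$ and the uniform $C^{s+1}$ control of $D_qH_\phi$, so no assumption on $J$ beyond Assumption~\ref{hyp:main} on $H$ is needed.

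The technical heart, which I would isolate as a lemma and prove in Appendix~\ref{appendixSection}, is a tame composition estimate in $H^s_z(\dom)$: if $\Psi\in C^{s+1}(\dom\times\R\times\R^d)$ is uniformly bounded in $C^{s+1}$ on $\dom\times B_r$ for each $r$, and $\|\partial_z^\alpha\Psi(\cdot,w)\|_{L^2_z}\le\Phi(\|w\|_{L^2_z\cap L^\infty_z})$ for all $|\alpha|\le s$, then $\|\Psi\circ(\iota_\dom,w)\|_{L^2_z}\le\Phi(\|w\|_{L^2_z\cap L^\infty_z})$ and
\[
\|\Psi\circ(\iota_\dom,w)\|_{\dot H^s_z}\le C_{s,d}\,\Phi^{\ast}\big(\|w\|_{L^2_z\cap L^\infty_z}\big)\big(1+\|w\|_{\dot H^s_z}\big)
\]
for a non-decreasing $\Phi^{\ast}$ built from $\Phi$ and $s$, together with the corresponding Lipschitz estimate. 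The proof is the classical one via the higher-order chain rule applied to $\partial_z^\beta[\Psi(z,w(z))]$, $|\beta|=s$: the unique top-order term --- all $s$ derivatives falling on a single factor of $w$, the $z$-slot of $\Psi$ undifferentiated --- is bounded by $\|\partial_w\Psi\|_{L^\infty}\|w\|_{\dot H^s_z}$; the term in which all $s$ derivatives fall on the $z$-slot of $\Psi$ is precisely where the $L^2$-integrability hypothesis enters; the remaining terms are of strictly lower order and are handled by H\"older's inequality and the Gagliardo--Nirenberg interpolation inequalities, using $H^s_z(\dom)\hookrightarrow L^\infty_z$ (valid since $s\ge s_\ast+1$). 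The Lipschitz statement follows by expanding $\partial_z^\beta[\Psi(z,w_1(z))]-\partial_z^\beta[\Psi(z,w_2(z))]$ term by term and telescoping, each telescoping step producing a factor $w_1-w_2$ through $\Psi(\cdot,w_1)-\Psi(\cdot,w_2)=\big(\int_0^1 D_w\Psi(\cdot,\theta w_1+(1-\theta)w_2)\,\dd\theta\big)(w_1-w_2)$; a short bookkeeping check shows that all derivatives of $\Psi$ that occur are of order at most $s+1$, which is exactly why $H$ is assumed $C^{s+2}$ (so that $D_pH$, $D_mH$, hence $J$, are $C^{s+1}$) and $g$ is assumed $C^{s+1}$.

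It then remains to assemble the pieces. Applying the composition estimate to $\Psi=H_\phi(t,\cdot)$ and $w=(m\circ\phi,p\circ\phi)(t,\cdot)$, using the $(1+t)$-power losses above and the embedding $\|(m,p)(t,\cdot)\|_{L^2_z\cap L^\infty_z}\le C_{s,d}\|(m,p)(t,\cdot)\|_{\Gamma^{s-1}(t)}$ (again obtained by transport through $\phi$, since $H^{s-1}_z\hookrightarrow L^\infty_z$), gives the pointwise-in-$t$ bound
\[
\|\mc{H}[m,p](t,\cdot)\|_{\Gamma^s(t)}^2\le(1+t)^{2s}\,F\big(\|(m,p)(t,\cdot)\|_{\Gamma^{s-1}(t)}^2\big)\big(1+\|(m,p)(t,\cdot)\|_{\dot\Gamma^s(t)}^2\big)
\]
for a non-decreasing $F$ built from $\Phi_H$, $s$, $d$. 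Squaring, integrating over $t\in(0,T)$, bounding $F$ at each time by $F(\|(m,p)\|_{L^\infty_t\Gamma^{s-1}_z}^2)$ by monotonicity, and using $\int_0^T(1+\|(m,p)(t,\cdot)\|_{\dot\Gamma^s(t)}^2)\,\dd t\le T+\|(m,p)\|_{L^2_t\Gamma^s_z}^2$ yields the stated bound for $\mc{H}$, and identically for $\mc{J}$ with $J_\phi$ in place of $H_\phi$. In the Lipschitz bound the only delicate point is the bookkeeping in the product $\big(\int_0^1 D_wH_\phi(\cdot,\theta w_1+(1-\theta)w_2)\,\dd\theta\big)(w_1-w_2)$ and the analogous products from the lower-order telescoping terms: when the top-order $\dot\Gamma^s$ derivatives fall on $w_1-w_2$ they meet a uniformly bounded coefficient, giving the term $\|(m_1-m_2,p_1-p_2)\|_{L^2_t\Gamma^s_z}^2$; when they fall on a factor carrying $w_1$ or $w_2$, that factor contributes an $L^2_t$ quantity $\|(m_i,p_i)(t,\cdot)\|_{\dot\Gamma^s(t)}^2$ which must be paired with $w_1-w_2$ measured only in $L^\infty_t\Gamma^{s-1}_z$, producing the factor $T+\max_i\|(m_i,p_i)\|_{L^2_t\Gamma^s_z}^2$ (the pure $T$ accounting for the purely lower-order pieces). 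Part (ii) for $\mc{G}$ is the same computation carried out at the single time $t=T$, where there is no $L^2_t$-versus-$L^\infty_t$ distinction, so the top-order term is simply absorbed into a non-decreasing $G$ built from $\Phi_g$, the hypothesis \eqref{eq:hyp:g-L2} playing the role that \eqref{eq:hyp:main-L2} plays above.

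The main obstacle is the tame composition estimate of the second step: obtaining the precise split between the $L^2_t$-controlled highest-order norm and the $L^\infty_t$-controlled lower-order norm, and --- crucially --- using \eqref{eq:hyp:main-L2}, rather than mere boundedness of $H$, to control the term in which every derivative hits the $z$-slot of $H$. This is exactly what places $\mc{H}[m,p]$ and $\mc{J}[m,p]$ in $L^2_tH^s_z$ over the whole of the (non-compact) domain $\dom$ rather than only locally; everything else (the flow-map reduction, the inheritance of the hypotheses by $H_\phi$, $J_\phi$, $g_\phi$, and the final bookkeeping) is routine.
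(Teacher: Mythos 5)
Your proposal is correct and follows essentially the same route as the paper. The paper also reduces the $\Gamma^s$ estimates to ordinary $H^s$ composition estimates via the flow map $\phi$ (Lemma~\ref{lem:CompositionFlow}, which shows $\tilde\Theta := \Theta \circ (\phi(t,\cdot),\iota)$ inherits Assumption~\ref{hyp:Theta} with $(1+t)^{s'}$-enlarged constants), the technical heart is the same tame composition estimate you describe (the paper's Lemma~\ref{lem:composition} and, crucially for the linear-in-$\|h\|_{\dot H^s}$ structure, Lemma~\ref{lem:composition-critplusone} proved via Kato--Ponce, together with the Lipschitz version Lemma~\ref{lem:Hs-Lipschitz}), and the assembly into $L^2_t$ is as you describe.

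The one place your sketch is looser than the paper's is the verification that $J = m\,D_pH$ satisfies the composition hypotheses. You assert the $L^2$ bounds for $J_\phi$ ``come for free from the prefactor $\mu$''; this is accurate for pure $z$-derivatives $\partial^\alpha_z J = m\,\partial^\alpha_z D_p H$, which is all Assumption~\ref{hyp:Theta} literally requires. The paper, however, also explicitly checks the derivative $\partial_m J = D_p H + m\,\partial_m D_p H$, whose first summand has no prefactor and is controlled there by invoking $L^2$ estimates for $\partial^\beta_z D_p H$. Whether this extra check is strictly required is subtle --- the paper's own remark inside the proof of Lemma~\ref{lem:composition-critplusone} notes that $L^2$ bounds on $\partial^\alpha_z D_h\Theta$ are not needed because such terms only appear multiplied by derivatives of $h$ --- so your dismissal is defensible, but in a full write-up you should at least flag the $\partial_m J$ term and explain why the $m$-prefactor or the product structure saves it, rather than assert the whole thing comes for free.

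A small bookkeeping remark: your $(1+t)^{s+2}$ enlargement factor when passing to $H_\phi$ is more pessimistic than necessary; the paper gets $(1+t)^{s'}$ for the order-$s'$ bound, which is what produces the clean $(1+T)^{2s}$ in the final statement after squaring. Both lead to the same qualitative conclusion since the powers of $(1+T)$ are absorbed into $F$ and $G$ in the end, but the sharper count is worth keeping.
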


For the proof of this proposition, we will require certain Sobolev regularity estimates on composite functions of the form $z \mapsto \Theta(z, h(z))$, where we are given $h \in H^s_{z}(\dom;\R^n)$. We consider functions $\Theta$ satisfying the following hypotheses. 

\begin{hyp} \label{hyp:Theta}
The function $\Theta :  \dom \times \R^n \to \R^l$ satisfies, for some integer $s \geq s_\ast$: 
\begin{itemize}
\item $\dom  \times \R^n\ni(z,h)\mapsto \Theta(z,h)$ 
is of class $C^{s}$, with bounds uniform in $z$ and locally uniform in $h$. 
That is, for all $0 \leq s' \leq s$ there exists a non-decreasing function $\Phi_\Theta^{(s')}:[0,+\infty)\to[0,+\infty)$ such that, for all $r > 0$,
\be
\|\Theta\|_{C^{s'}(\dom \times B_r(0))} \leq \Phi_\Theta^{(s')}(r) .
\ee
\item Pure $z$-derivatives are $L^2$ when composed with an { $L^2 \cap L^\infty$} function, with locally uniform bounds, i.e. for all $0\leq s' \leq s$ there exists a non-decreasing function $\tilde \Phi_\Theta^{(s')}:[0,+\infty)\to[0,+\infty)$ such that
\be
\left \| \partial^\alpha_z \Theta \circ (\iota_{ \dom }, h) \right \|_{L^2} \leq \tilde \Phi_{\Theta}^{(s')} \left ( \| h \|_{L^2 \cap L^\infty} \right )  \qquad \forall h \in (L^2 \cap L^\infty)(\dom) , \, \alpha \in (\mb{N}\cup\{0\})^{m},  |\alpha| \leq s' .
\ee
\end{itemize}
Without loss of generality we will take $\Phi_\Theta^{(s')} = \tilde \Phi_\Theta^{(s')}$.
\end{hyp}
For $\Theta$ satisfying Assumption~\ref{hyp:Theta}, we define $s(\Theta)$ to be the maximal integer $s \geq s_\ast$ such that Assumption~\ref{hyp:Theta} holds for this value of $s$ (if Assumption~\ref{hyp:Theta} holds for all $s \geq s_\ast$, then $s(\Theta) : = + \infty$).

We then have the following Sobolev estimate on the composition $\Theta \circ (\iota_{\dom}, h)$ (see the discussion in \cite{ambroseMFG4}), which is phrased in terms of the standard $H^s$ norms.

\begin{lemma} \label{lem:composition}
Let $\Theta : \dom \times \R^n \to \R^l$ be a function satisfying Assumption~\ref{hyp:Theta}. Let $h: \dom \to \R^n$ be an $H^{s}$ function, where $s$ is an integer such that $s(\Theta) \geq s \geq s_\ast$. Let $\iota=\iota_{ \dom }$. Then the composition $\Theta \circ (\iota, h) : \dom \to \R^l$ is of class $H^{s}$ with 
\begin{align}
\| \Theta \circ (\iota, h) \|_{L^2_z} & \leq \Phi_\Theta^{(0)}(\| h \|_{L^2 \cap L^\infty}), \\
\| \Theta \circ (\iota, h) \|_{\dot H^{s}_z} & \leq  C_{s,d} \left( 1 + \| h \|_{H^{s}}^{s} \right )
 \Phi_\Theta^{(s)}(\| h \|_{L^2 \cap L^\infty}), 
\end{align}
where $\Phi_\Theta^{(0)},\Phi_\Theta^{(s)}:[0,+\infty)\to[0,+\infty)$ are the functions defined in Assumption~\ref{hyp:Theta}.

\end{lemma}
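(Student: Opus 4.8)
The plan is to handle the $L^2_z$ bound and the homogeneous $\dot H^s_z$ bound separately. The $L^2_z$ estimate is immediate: the second bullet of Assumption~\ref{hyp:Theta}, applied with $s' = 0$ and the trivial multi-index, gives $\|\Theta \circ (\iota, h)\|_{L^2_z} \le \tilde\Phi_\Theta^{(0)}(\|h\|_{L^2 \cap L^\infty}) = \Phi_\Theta^{(0)}(\|h\|_{L^2 \cap L^\infty})$, which is exactly the claim. For the $\dot H^s_z$ bound, fix a multi-index $\beta$ with $|\beta| = s$ and expand $\partial_z^\beta(\Theta \circ (\iota, h))$ by the Fa\`a di Bruno formula, viewing $\Theta \circ (\iota, h)$ as the composition of $\Theta$ with the \emph{graph map} $z \mapsto (z, h(z))$. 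Since the first $2d$ components of this map are linear in $z$, only the components given by $h$ produce derivatives of order $\ge 2$, and one obtains a finite sum (with combinatorial coefficients depending only on $s$ and $d$) of terms of the form $(\partial_z^\mu \partial_h^\nu \Theta)(z, h(z)) \prod_{i=1}^{|\nu|} \partial_z^{\alpha_i} h_{j_i}(z)$, where $|\mu| + \sum_{i=1}^{|\nu|}|\alpha_i| = s$, each $|\alpha_i| \ge 1$, and consequently $|\mu| + |\nu| \le s$. It then suffices to bound the $L^2_z$ norm of each such term.

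The term with $|\nu| = 0$ is exactly $\partial_z^\beta \Theta \circ (\iota, h)$, which by the second bullet of Assumption~\ref{hyp:Theta} with $s' = s$ is bounded in $L^2_z$ by $\Phi_\Theta^{(s)}(\|h\|_{L^2 \cap L^\infty})$; this is the only place where the $L^2$ norm of $h$ genuinely intervenes, and it is what makes the estimate work on the unbounded factor $\R^d$. For a term with $|\nu| \ge 1$, I would split it by H\"older's inequality into $\|(\partial_z^\mu \partial_h^\nu \Theta)(z, h(z))\|_{L^\infty_z} \cdot \bigl\|\prod_{i=1}^{|\nu|}\partial_z^{\alpha_i} h_{j_i}\bigr\|_{L^2_z}$. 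The first factor is at most $\|\Theta\|_{C^{|\mu|+|\nu|}(\dom \times B_r(0))}$ with $r = \|h\|_{L^\infty}$, hence at most $\Phi_\Theta^{(s)}(\|h\|_{L^\infty})$ since $|\mu| + |\nu| \le s$ (taking, as we may, $\Phi_\Theta^{(s')}$ non-decreasing in $s'$ as well). For the second factor, set $\sigma := s - |\mu| = \sum_i |\alpha_i| \in \{1, \dots, s\}$; since $1 \le |\alpha_i| \le \sigma$, H\"older with the exponents $2\sigma/|\alpha_i|$ (whose reciprocals sum to $1/2$), combined with the Gagliardo--Nirenberg interpolation inequality $\|\partial_z^{\alpha_i} h\|_{L^{2\sigma/|\alpha_i|}_z} \le C_{s,d}\|h\|_{L^\infty_z}^{1 - |\alpha_i|/\sigma}\|h\|_{H^\sigma_z}^{|\alpha_i|/\sigma}$, gives a bound by $C_{s,d}\|h\|_{L^\infty_z}^{|\nu|-1}\|h\|_{H^\sigma_z} \le C_{s,d}\|h\|_{L^\infty_z}^{|\nu|-1}\|h\|_{H^s_z}$.

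To conclude, I would sum the finitely many terms and use the Sobolev embedding $\|h\|_{L^\infty_z} \le C_{s,d}\|h\|_{H^s_z}$ (valid for $s \ge s_\ast$) to absorb the factors $\|h\|_{L^\infty_z}^{|\nu|-1}\|h\|_{H^s_z} \le C_{s,d}(1 + \|h\|_{H^s_z}^s)$ (the exponent $|\nu|$ being at most $s$), while estimating $\Phi_\Theta^{(s)}(\|h\|_{L^\infty}) \le \Phi_\Theta^{(s)}(\|h\|_{L^2 \cap L^\infty})$; this yields $\|\Theta \circ (\iota, h)\|_{\dot H^s_z} \le C_{s,d}(1 + \|h\|_{H^s_z}^s)\,\Phi_\Theta^{(s)}(\|h\|_{L^2 \cap L^\infty})$, as desired. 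The step I expect to require the most care is the Fa\`a di Bruno bookkeeping: one must verify that the total order $|\mu| + |\nu|$ of derivatives landing on $\Theta$ never exceeds $s$, so that the assumed $C^s$ regularity of $\Theta$ suffices and no derivative is lost; once that is in hand the rest is a routine combination of H\"older's inequality, Gagliardo--Nirenberg interpolation, and Sobolev embedding, much of which already appears in \cite{ambroseMFG4}.
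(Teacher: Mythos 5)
Your proof is correct, and since the paper itself does not prove this lemma (it simply cites the discussion in the predecessor paper), you have in effect supplied in full the argument the authors defer to elsewhere. Your decomposition of the Fa\`a di Bruno expansion into the single term with $|\nu|=0$ (which is the only one where the $L^2$-type hypothesis of Assumption~\ref{hyp:Theta} is needed, precisely to handle the non-compactness of the domain) plus the $|\nu|\ge 1$ terms (handled by H\"older, the Gagliardo--Nirenberg interpolation $\|\partial^{\alpha_i} h\|_{L^{2\sigma/|\alpha_i|}} \lesssim \|h\|_{L^\infty}^{1-|\alpha_i|/\sigma}\|h\|_{H^\sigma}^{|\alpha_i|/\sigma}$, and Sobolev embedding) is the standard Moser-type scheme, and your bookkeeping is accurate: because each factor of a derivative of $h$ has order at least one, the total order of derivatives falling on $\Theta$ is $|\mu|+|\nu| \le |\mu|+\sum_i|\alpha_i| = s$, so the $C^s$ regularity of $\Theta$ (and the availability of $\Phi_\Theta^{(s')}$ for $s'\le s$) suffices. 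Two minor remarks for completeness: the endpoint of Gagliardo--Nirenberg when some $|\alpha_i|=\sigma$ (i.e.\ $|\nu|=1$) is degenerate but trivially true, so there is no issue; and the replacement of $\Phi_\Theta^{(s')}$ by a family non-decreasing in $s'$ is indeed harmless, as you note, since one may take running maxima without weakening Assumption~\ref{hyp:Theta}.
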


If $s \geq s_\ast + 1$, we may obtain the following refined dependence on the highest order derivative of $h$.
 
 \begin{lemma} \label{lem:composition-critplusone}
 Let $s$ be an integer such that $s \geq s_{\ast} + 1$.
 Assume that $\Theta : \dom \times \R^n \to \R^l $ satisfies Assumption~\ref{hyp:Theta} for $s(\Theta) \geq s$. Let $h: \dom  \to \R^n$ be a function of class $H^{s}$ and let $\iota = \iota_{ \dom }$. Then the composition $\Theta \circ (\iota, h)$ is an $H^{s}(\dom)$ function. Furthermore, there exists a non-decreasing function $\Psi_\Theta^{(s)} : \R_+ \to \R_+$ (depending on $\Theta$) such that the following estimate holds:
  \be
 \| \Theta \circ (\iota, h) \|_{\dot H^{s}} \leq (1+ \| h \|_{H^{s}}) \,\Psi_\Theta^{(s)} \left ( \| h \|_{H^{s-1}} \right ) .
 \ee
 \end{lemma}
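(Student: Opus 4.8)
The plan is to refine the estimate of Lemma~\ref{lem:composition} by a more careful bookkeeping of where the top-order derivative of $h$ can appear after applying the Faà di Bruno / Leibniz expansion to $\partial^\beta_z(\Theta\circ(\iota,h))$ with $|\beta|=s$. Writing out the chain rule, a typical term is a product of the form $(\partial^{\alpha_0}_z\partial^{\kappa}_h\Theta)\circ(\iota,h)\cdot\prod_{i=1}^{|\kappa|}\partial^{\beta_i}_z h$, where $\alpha_0+\sum_i\beta_i=\beta$ in the $z$-variables, each $|\beta_i|\geq 1$, and the total number of $h$-derivative factors is $|\kappa|$. The key combinatorial observation is that, since each $|\beta_i|\geq 1$ and $\sum|\beta_i|\leq s$, \emph{at most one} factor $\partial^{\beta_i}_z h$ can have $|\beta_i|=s$ (in which case all other factors, and $\alpha_0$, are absent, so the term is just $(\partial_h\Theta)\circ(\iota,h)\cdot\partial^\beta_z h$); and in every other term, each $|\beta_i|\leq s-1$.

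First I would treat the single exceptional term $(\partial_h\Theta)\circ(\iota,h)\cdot\partial^\beta_z h$: here the coefficient $(\partial_h\Theta)\circ(\iota,h)$ is bounded in $L^\infty$ by $\Phi_\Theta^{(0)}(\|h\|_{L^\infty})$ (using the $C^s$-with-uniform-in-$z$-bounds part of Assumption~\ref{hyp:Theta} with $s\geq s_\ast\geq 1$), so this term contributes $\lesssim \|h\|_{\dot H^s}\Phi_\Theta^{(0)}(\|h\|_{L^\infty})\leq \|h\|_{H^s}\,\Psi$, which is the source of the single power of $\|h\|_{H^s}$. Second, for every remaining term I estimate the product of lower-order $h$-factors in $L^2$ by a Gagliardo--Nirenberg / tame product estimate: since all derivative orders $|\beta_i|$ are $\leq s-1$ and they sum to at most $s$, while $s-1\geq s_\ast\geq d+1$ guarantees $H^{s-1}_z\hookrightarrow L^\infty_z$, the whole product of $h$-factors is controlled by $\|h\|_{H^{s-1}}^{|\kappa|}$ (one can distribute derivatives so that at most one factor carries more than $s_\ast$ derivatives, placing the rest in $L^\infty$ via the embedding, and even that factor lands in $H^{s-1}$). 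The composite coefficient $(\partial^{\alpha_0}_z\partial^\kappa_h\Theta)\circ(\iota,h)$ is handled by the $L^2$-composition bound built into Assumption~\ref{hyp:Theta} (the second bullet, giving $\|\partial^{\alpha_0}_z(\partial^\kappa_h\Theta)\circ(\iota,h)\|_{L^2}\leq\tilde\Phi(\|h\|_{L^2\cap L^\infty})$) when $\alpha_0\neq 0$, or simply by the $L^\infty$ bound $\Phi_\Theta^{(|\kappa|)}(\|h\|_{L^\infty})$ when $\alpha_0=0$; in the latter subcase at least one $h$-factor is present and can be put in $L^2$ while the coefficient is in $L^\infty$. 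Collecting, every non-exceptional term is bounded by a non-decreasing function of $\|h\|_{H^{s-1}}$ alone, and summing over the finitely many terms yields $\|\Theta\circ(\iota,h)\|_{\dot H^s}\leq(1+\|h\|_{H^s})\,\Psi_\Theta^{(s)}(\|h\|_{H^{s-1}})$ with $\Psi_\Theta^{(s)}$ built from $\Phi_\Theta^{(0)},\dots,\Phi_\Theta^{(s)}$ and the embedding and product constants.

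The main obstacle is the careful combinatorial argument showing that in every term of the Faà di Bruno expansion other than the designated one, all $h$-factors genuinely carry at most $s-1$ derivatives \emph{and} can be simultaneously placed in the right spaces so that the resulting bound depends only on $\|h\|_{H^{s-1}}$; this requires the sharp Gagliardo--Nirenberg interpolation to split the product and crucially uses $s-1\geq s_\ast$ for the $L^\infty$ embedding. The remaining bookkeeping — matching multi-indices, counting, and assembling $\Psi_\Theta^{(s)}$ from the hypothesis functions — is routine, and the factor $(1+\|h\|_{H^s})$ appears exactly once because of the at-most-one-top-order-factor structure. (This is essentially the refinement already used in \cite{ambroseMFG4}; alternatively, one can deduce it from Lemma~\ref{lem:composition} by an interpolation argument, writing $\|h\|_{H^s}^{s}=\|h\|_{H^s}\cdot\|h\|_{H^s}^{s-1}$ and interpolating the lower power against $\|h\|_{H^{s-1}}$, but the direct Leibniz argument gives the cleaner constant.)
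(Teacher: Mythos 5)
Your proof takes a genuinely different route from the paper's. The paper does not expand $\partial^\beta(\Theta\circ(\iota,h))$ directly by Fa\`a di Bruno; instead it peels off a single derivative, writing $\partial^\beta[\Theta\circ(\iota,h)]=\partial^{\beta-e_j}\bigl(\partial_{z_j}h\cdot(D_h\Theta\circ(\iota,h))+\partial_{z_j}\Theta\circ(\iota,h)\bigr)$, treats the second piece by applying Lemma~\ref{lem:composition} to $\partial_{z_j}\Theta$ (which satisfies Assumption~\ref{hyp:Theta} at order $s-1$), and treats the first piece with the Kato--Ponce product estimate (Lemma~\ref{lem:Kato-Ponce}) together with a second application of Lemma~\ref{lem:composition} to $D_h\Theta$. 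This organization offloads all the combinatorics to those two black boxes, and the single factor of $\|h\|_{H^s}$ appears cleanly from $\|\partial_{z_j}h\|_{\dot H^{s-1}}$. Your approach — direct Fa\`a di Bruno plus careful accounting of which factor can reach order $s$ — is a legitimate alternative and is indeed closer to what is done in \cite{ambroseMFG4}; it is more hands-on but avoids invoking Kato--Ponce, at the price of having to justify by hand how derivatives distribute across the product of $h$-factors.

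One claim in your argument is stated too strongly and deserves correction. You assert that, for every non-exceptional term, "the whole product of $h$-factors is controlled by $\|h\|_{H^{s-1}}^{|\kappa|}$" and hence that "every non-exceptional term is bounded by a non-decreasing function of $\|h\|_{H^{s-1}}$ alone." This is not true for the terms with $\alpha_0=0$ and $|\kappa|\ge 2$, where $\sum_i|\beta_i|=s$ even though each $|\beta_i|\le s-1$: the Gagliardo--Nirenberg estimate you cite gives
\begin{equation}
\Bigl\|\prod_{i}\partial^{\beta_i}_z h\Bigr\|_{L^2}\lesssim \|h\|_{L^\infty}^{|\kappa|-1}\,\|h\|_{\dot H^{\,\sum_i|\beta_i|}}=\|h\|_{L^\infty}^{|\kappa|-1}\,\|h\|_{\dot H^{s}},
\end{equation}
so a single factor of $\|h\|_{\dot H^s}$ is genuinely present and cannot be traded down to $H^{s-1}$ (in the borderline case $s=s_\ast+1$, even a lone first derivative placed in $L^\infty$ already costs $\|h\|_{H^{s_\ast+1}}=\|h\|_{H^s}$, since $H^{s-2}=H^{d}(\M\times\R^d)\not\hookrightarrow L^\infty$). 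The correct statement is that Gagliardo--Nirenberg distributes things so that \emph{at most one} power of $\|h\|_{\dot H^s}$ ever appears, and this single power is exactly what the prefactor $(1+\|h\|_{H^s})$ in the lemma is there to absorb. Once you phrase the estimate for the non-exceptional terms as $\lesssim(1+\|h\|_{H^s})\,\Psi(\|h\|_{H^{s-1}})$ rather than $\lesssim\Psi(\|h\|_{H^{s-1}})$, your argument goes through and recovers the stated conclusion.
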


The proof requires the classical Kato--Ponce inequality.

\begin{lemma} {\cite[Lemma X4]{Kato-Ponce}} \label{lem:Kato-Ponce}
Let $f,h \in H^s \cap L^\infty$. Then
\be
\| f h \|_{H^s} \leq C_s \left ( \| f \|_{H^s} \| h \|_{L^\infty} +  \| f \|_{L^\infty} \| h \|_{H^s} \right ) .
\ee

\end{lemma}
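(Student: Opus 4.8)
This is the classical Kato--Ponce (fractional Leibniz) inequality; in the present paper it is only ever invoked for \emph{integer} $s$, so the plan is to give the elementary proof based on the ordinary Leibniz rule and the Gagliardo--Nirenberg interpolation inequality, rather than the Littlewood--Paley argument of \cite{Kato-Ponce} needed in the fractional case. First I would reduce to the homogeneous statement: since $\|w\|_{H^s}^2=\|w\|_{L^2}^2+\|w\|_{\dot H^s}^2$ and $\|fh\|_{L^2}\le\|f\|_{L^\infty}\|h\|_{L^2}\le\|f\|_{L^\infty}\|h\|_{H^s}$, it suffices to bound $\|fh\|_{\dot H^s}$ by $C_s(\|f\|_{\dot H^s}\|h\|_{L^\infty}+\|f\|_{L^\infty}\|h\|_{\dot H^s})$. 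Expanding $\partial^\alpha(fh)=\sum_{0\le\beta\le\alpha}\binom{\alpha}{\beta}(\partial^\beta f)(\partial^{\alpha-\beta}h)$ for each multi-index with $|\alpha|=s$, it then remains to estimate $\|(\partial^\beta f)(\partial^{\alpha-\beta}h)\|_{L^2}$ for each $\beta\le\alpha$; the two endpoint terms $\beta=0$ and $\beta=\alpha$ are bounded directly by H\"older by $\|f\|_{L^\infty}\|h\|_{\dot H^s}$ and $\|f\|_{\dot H^s}\|h\|_{L^\infty}$ respectively.

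For an intermediate term with $j:=|\beta|$ and $1\le j\le s-1$, I would apply H\"older with the conjugate exponents $p=2s/j$ and $q=2s/(s-j)$ (so that $\tfrac{1}{p}+\tfrac{1}{q}=\tfrac{1}{2}$), followed by Gagliardo--Nirenberg on $\dom$ in the form $\|\partial^\beta f\|_{L^{2s/j}}\lesssim\|f\|_{\dot H^s}^{j/s}\|f\|_{L^\infty}^{1-j/s}$ and $\|\partial^{\alpha-\beta}h\|_{L^{2s/(s-j)}}\lesssim\|h\|_{\dot H^s}^{(s-j)/s}\|h\|_{L^\infty}^{j/s}$. Multiplying these identifies the product as $\big(\|f\|_{\dot H^s}\|h\|_{L^\infty}\big)^{j/s}\big(\|f\|_{L^\infty}\|h\|_{\dot H^s}\big)^{(s-j)/s}$, which the weighted arithmetic--geometric mean inequality bounds by $\|f\|_{\dot H^s}\|h\|_{L^\infty}+\|f\|_{L^\infty}\|h\|_{\dot H^s}$. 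Summing over the finitely many $\beta\le\alpha$ and the finitely many $\alpha$ with $|\alpha|=s$, and absorbing the combinatorial constants into $C_s$, yields the claimed bound.

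The only step needing a little care is the Gagliardo--Nirenberg inequality with an $L^\infty$ endpoint on $\dom=\M\times\R^d$: for $\M=\R^d$ this is the textbook inequality on $\R^{2d}$, while for $\M=\TT^d$ one has $\dom=\TT^d\times\R^d$, a smooth manifold (indeed a product of abelian Lie groups) with the uniform regularity needed for such interpolation inequalities, so the estimate holds in the open range $0<j/s<1$ used above; alternatively it can be proved from scratch via Bernstein's inequality and a Littlewood--Paley decomposition on $\TT^d\times\R^d$. I do not expect any genuine obstacle here. For completeness I note that in the truly fractional case the Leibniz expansion is unavailable and is replaced by Bony's paraproduct decomposition $fh=T_fh+T_hf+R(f,h)$, with the two paraproducts contributing $\|f\|_{L^\infty}\|h\|_{\dot H^s}$ and $\|f\|_{\dot H^s}\|h\|_{L^\infty}$ and the resonant term $R(f,h)$ summable in $\dot H^s$ for every $s>0$; this is the route taken in \cite{Kato-Ponce}, which the paper simply cites.
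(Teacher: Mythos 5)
Your argument is correct, and it is worth noting that the paper itself does not prove this lemma at all: it is quoted directly from \cite{Kato-Ponce}, where the proof uses the Littlewood--Paley/commutator machinery required for fractional $s$. Your route --- the ordinary Leibniz expansion, H\"older with the conjugate pair $2s/j$ and $2s/(s-j)$, the Gagliardo--Nirenberg interpolation $\| \partial^\beta f \|_{L^{2s/j}} \lesssim \| f \|_{\dot H^s}^{j/s} \| f \|_{L^\infty}^{1-j/s}$, and the weighted AM--GM inequality --- is the classical Moser-type product estimate, and it is fully adequate here because the paper only ever invokes the lemma for integer $s$ (all Sobolev indices are integers $\geq s_\ast + 1$, and indeed the paper's $\dot H^s$ seminorm is defined only for integer order). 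What the elementary argument buys is self-containedness and a transparent view of where the $L^\infty$ endpoints enter; what it gives up is the fractional case, for which the paraproduct decomposition you mention is genuinely needed. Your treatment of the two delicate points is also right: the exceptional case of Gagliardo--Nirenberg at the boundary exponent $a = j/s$ is harmless since $j/s < 1$ for the intermediate terms, and the interpolation inequality with $L^\infty$ lower endpoint does hold on $\TT^d \times \R^d$ as well as on $\R^{2d}$ (periodicity removes the boundary terms in the usual integration-by-parts proof, or one can argue via Littlewood--Paley on the product). I see no gaps.
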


\begin{proof}[Proof of Lemma~\ref{lem:composition-critplusone}]

Let $\beta \in (\NN \cup \{0\} )^{m}$ be a multi-index of order $|\beta| = s$. Then there exists $j$ such that $\beta_j \geq 1$, and thus the multi-index $\beta - e_j$ has non-negative coordinates. By the chain rule for Sobolev functions we may therefore write
\be
\partial^\beta \left [ \Theta \circ (\iota, h) \right ] = \partial^{\beta - e_j} \partial_j(\Theta \circ (\iota, h)) = \partial^{\beta - e_j} \left ( \partial_{z_j} h \cdot ( D_h \Theta \circ (\iota, h)) + \partial_{z_j} \Theta \circ (\iota, h) \right ) .
\ee
Observe that $\partial_{z_j} \Theta$ satisfies Assumption~\ref{hyp:Theta} at order $s({D_z \Theta}) = s(\Theta) -1$. Hence, by Lemma~\ref{lem:composition}, $\partial_{z_j} \Theta \circ (\iota, h) \in H^{s-1}$, with the estimate
\be
\left\| \partial^{\beta - e_j} \left [ \partial_{z_j} \Theta \circ (\iota, h) \right ] \right\|_{L^2} \leq C_{s,d}  \left( 1 + \| h \|_{H^{s-1}}^{s-1} \right ) \Phi_{D_z \Theta}^{(s-1)}(\| h \|_{L^2 \cap L^\infty}) .
\ee

To control $\partial^{\beta - e_j} \left ( \partial_{z_j} h \cdot ( D_h \Theta \circ (\iota, h)) \right )$, we first work under the additional assumption that 
$D_h \Theta$ satisfies Assumption~\ref{hyp:Theta} with $s(D_h \Theta) \geq s-1$, i.e. we assume $L^2$ estimates on the following derivatives
\be \label{L2Dh}
\left \| \partial^\alpha_z D_h \Theta \circ (\iota_{ \dom }, h) \right \|_{L^2} \leq \Phi_{D_h \Theta}^{(s-1)} \left ( \| h \|_{L^2 \cap L^\infty} \right )  \quad \forall h \in L^2 \cap L^\infty(\dom) , \,  |\alpha| \leq s -1  .
\ee

Then, by the Kato--Ponce inequality (given in Lemma \ref{lem:Kato-Ponce}):
\be
\|  \partial_{z_j} h \cdot D_h \Theta \circ (\iota, h) \|_{\dot H^{s-1}} \leq C_{s,d} \left ( \| \partial_{z_j} h \|_{\dot H^{s-1}} \| D_h \Theta \circ (\iota, h) \|_{L^\infty} + \| \partial_{z_j} h \|_{L^\infty} \| D_h \Theta \circ (\iota, h) \|_{\dot H^{s-1}} \right)
\ee
By Lemma~\ref{lem:composition}, 
\be
\| D_h \Theta \circ (\iota, h) \|_{\dot H^{s-1}} \leq C_{s,d}  \left( 1 + \| h \|_{H^{s-1}}^{s-1} \right ) \Phi_{D_h \Theta}^{(s-1)}(\| h \|_{L^2 \cap L^\infty}) .
\ee

We then observe that in fact this estimate holds without the additional estimates \eqref{L2Dh}. The role of the $L^2$ estimates on pure $z$ derivatives in the proof of Lemma~\ref{lem:composition} is to control in $L^2$ the terms of the form $\partial^\alpha_z \Theta \circ (\iota, h)$ in the expansion of $\partial^\alpha_x \left ( \Theta \circ (\iota, h) \right )$. In the present setting,
scrutiny of the Leibniz/Faa di Bruno expressions for derivatives $\partial^\alpha \left ( \partial_{z_j} h \cdot D_h \Theta \circ (\iota, h) \right )$
shows that
quantities of the form $\partial^\alpha_z D_h \Theta \circ (\iota_{ \dom }, h)$
only appear in products with derivatives of $h$. A separate $L^2$ estimate is therefore unnecessary.

It follows that
\begin{multline}
\| \partial^\beta [ \Theta \circ (\iota, h) ] \|_{L^2} \leq  C_{s,d}( 1 + \| Dh \|_{L^\infty}) \left( 1 + \| h \|_{H^{s-1}}^{s-1} \right ) \Phi_{D_{z,h} \Theta}^{(s-1)}(\| h \|_{L^2 \cap L^\infty}) .  \\
+ C_{s,d} \Phi_{\Theta}^{(1)}(\| h \|_{L^2 \cap L^\infty}) \| h \|_{\dot H^{s}} .
\end{multline}

Then, since $\| Dh \|_{L^\infty} \leq C_d \| h \|_{H^{s_\ast + 1}}$ and $s \geq s_\ast + 1$, we conclude that
\begin{align*}
\| \partial^\beta(\Theta \circ ( \iota, h) \|_{L^2} & \leq 
C_{s,d}( 1 + \| h \|_{H^{s}}) \left( 1 + \| h \|_{H^{s-1}}^{s-1} \right ) \Phi_{ \Theta}^{(s)}(C_d \| h \|_{H^{s_\ast + 1}} ) , 
\end{align*}
which completes the proof.

\end{proof}

\begin{lemma} \label{lem:Hs-Lipschitz}
 Let $s$ be an integer such that $s \geq s_{\ast}+1$. Let $\Theta :  \dom \times \R^n \to \R^l $ be given.
Assume that $\Theta : \R^m \times \R^n \to \R^l $ satisfies Assumption~\ref{hyp:Theta} for $s(\Theta) \geq s + 1$. 
Let $h_1, h_2: \dom \to \R^n$ be $H^{s}$ functions. 
Then there exist non-decreasing functions $\Lambda_\Theta^{(0)}, \Lambda_\Theta^{(s)} : \R_+ \to \R_+$ such that
\begin{align}
\| \Theta \circ (\iota, h_1)  - \Theta \circ (\iota, h_2) \|_{L^2} & \leq \| h_1 - h_2 \|_{L^2} \, \Lambda_\Theta^{(0)} \left (  \max_i \| h_i \|_{H^{s-1}} \right ) \\
\| \Theta \circ (\iota, h_1)  - \Theta \circ (\iota, h_2) \|_{\dot H^s} & \leq \left ( \| h_1 - h_2 \|_{H^s} + (\| h_1 \|_{H^s} + \|h_2 \|_{H^s}) \| h_1 - h_2 \|_{H^{s_\ast}} \right ) \\& \qquad \times \Lambda_\Theta^{(s)} \left (  \max_i \| h_i \|_{H^{s-1}}\right) .
\end{align}
\end{lemma}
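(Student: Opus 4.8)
The plan is to reduce the Lipschitz estimates for $\Theta$ to the already-established bounds in Lemmas~\ref{lem:composition} and \ref{lem:composition-critplusone} by applying them to a suitable auxiliary function. The standard device is to write the difference as an integral of a derivative along the segment joining $h_1$ and $h_2$. Concretely, set $h_\tau := (1-\tau) h_2 + \tau h_1$ for $\tau \in [0,1]$, so that
\[
\Theta \circ (\iota, h_1) - \Theta \circ (\iota, h_2) = \int_0^1 \left( D_h \Theta \circ (\iota, h_\tau) \right) \cdot (h_1 - h_2) \, \dd \tau .
\]
For the $L^2$ bound I would simply estimate the integrand in $L^2$ using the uniform $C^{0}$-bound on $D_h\Theta$ (from Assumption~\ref{hyp:Theta}), together with $\|h_\tau\|_{L^\infty}\le C\max_i\|h_i\|_{H^{s-1}}$ since $s-1\ge s_\ast$ embeds into $L^\infty$; this yields the first inequality with $\Lambda_\Theta^{(0)}$ built from $\Phi_{D_h\Theta}^{(0)}$. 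Note $D_h\Theta$ satisfies Assumption~\ref{hyp:Theta} at order $s(D_h\Theta)=s(\Theta)-1\ge s$, which is exactly what is needed to invoke the higher-order lemmas on it.

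For the $\dot H^s$ bound, I would bring $s$ derivatives $\partial^\beta$ (with $|\beta|=s$) inside the integral and expand by the Leibniz/Faà di Bruno rule applied to the product $(D_h \Theta \circ (\iota, h_\tau)) \cdot (h_1 - h_2)$. The terms split into two types: (a) all $s$ derivatives land on the factor $h_1-h_2$, producing a term controlled by $\|h_1-h_2\|_{\dot H^s}$ times $\|D_h\Theta\circ(\iota,h_\tau)\|_{L^\infty}\le \Phi^{(0)}_{D_h\Theta}(\|h_\tau\|_{L^\infty})$; (b) at least one derivative falls on $D_h\Theta\circ(\iota,h_\tau)$. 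For type (b), I use the Kato--Ponce inequality (Lemma~\ref{lem:Kato-Ponce}), exactly as in the proof of Lemma~\ref{lem:composition-critplusone}, to distribute the derivatives between the low-order factor $\|h_1-h_2\|_{L^\infty}\lesssim\|h_1-h_2\|_{H^{s_\ast}}$ and the high-order factor $\|D_h\Theta\circ(\iota,h_\tau)\|_{\dot H^{s-1}}$; the latter is bounded via Lemma~\ref{lem:composition} (applied to $D_h\Theta$, legitimate since $s(D_h\Theta)\ge s> s-1$) by $C_{s,d}(1+\|h_\tau\|_{H^{s-1}}^{s-1})\Phi^{(s-1)}_{D_h\Theta}(\|h_\tau\|_{L^2\cap L^\infty})$, which in turn is dominated by a non-decreasing function of $\max_i\|h_i\|_{H^{s-1}}$ using $\|h_\tau\|_{H^{s-1}}\le\max_i\|h_i\|_{H^{s-1}}$ and the $L^2\cap L^\infty\hookleftarrow H^{s-1}$ embedding. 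Collecting, the type-(a) term gives $\|h_1-h_2\|_{H^s}\,\Lambda_\Theta^{(s)}(\cdot)$ and the type-(b) terms give $(\|h_1\|_{H^s}+\|h_2\|_{H^s})\|h_1-h_2\|_{H^{s_\ast}}\,\Lambda_\Theta^{(s)}(\cdot)$, since the top-order norm $\|h_\tau\|_{\dot H^s}\le \tfrac12(\|h_1\|_{\dot H^s}+\|h_2\|_{\dot H^s})$ enters linearly through the $(1+\|h_\tau\|_{H^s})$ prefactor in the analogue of Lemma~\ref{lem:composition-critplusone} applied inside; after integrating over $\tau\in[0,1]$ the claimed estimate follows.

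One technical wrinkle, handled just as in the proof of Lemma~\ref{lem:composition-critplusone}, is that I do not actually need $D_h D_h\Theta = D_h^2\Theta$ (or mixed $\partial_z^\alpha D_h\Theta$) to satisfy the separate $L^2$ hypothesis of Assumption~\ref{hyp:Theta}: in every term of the Faà di Bruno expansion where a pure-$z$ or mixed derivative of $D_h\Theta$ appears, it is multiplied by at least one derivative of $h_\tau$ or of $h_1-h_2$, so the $L^\infty$ bounds from the $C^{s'}$ part of Assumption~\ref{hyp:Theta} suffice and no standalone $L^2$ control on those derivatives is required. This is precisely why the hypothesis is stated with $s(\Theta)\ge s+1$: it guarantees $D_h\Theta$ inherits Assumption~\ref{hyp:Theta} at order $\ge s$, so that Lemmas~\ref{lem:composition} and \ref{lem:composition-critplusone} are applicable to $D_h\Theta$ with the needed range of derivatives.

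The main obstacle is the bookkeeping in the Leibniz/Faà di Bruno expansion of $\partial^\beta\big((D_h\Theta\circ(\iota,h_\tau))\cdot(h_1-h_2)\big)$: one must verify carefully that every resulting monomial is a product of (i) a composite factor $\partial_z^\gamma\partial_h^k\Theta\circ(\iota,h_\tau)$ with $|\gamma|+\text{(number of }h\text{-derivatives, counted with the }h_\tau\text{ inside)}\le s$, times (ii) derivatives of $h_\tau$ whose orders sum appropriately, times (iii) a derivative $\partial^\sigma(h_1-h_2)$, and that the total order is $s$; and then to assign each monomial to either type (a) ($|\sigma|=s$) or type (b) ($|\sigma|<s$, so at least one derivative hits the composite factor) and apply Kato--Ponce with the right split. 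This is routine but delicate, and is the reason the statement tolerates the slightly lossy term $(\|h_1\|_{H^s}+\|h_2\|_{H^s})\|h_1-h_2\|_{H^{s_\ast}}$ rather than a cleaner $\|h_1-h_2\|_{H^s}$ alone. I would organize the proof to mirror Lemma~\ref{lem:composition-critplusone} as closely as possible so that the reader can transfer the earlier estimates almost verbatim.
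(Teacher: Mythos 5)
Your overall plan is the same as the paper's: write the difference by the fundamental theorem of calculus along the segment $h_\eta := \eta h_1 + (1-\eta) h_2$, bound the $L^2$ term by the uniform $C^1$-bound on $\Theta$, and handle $\dot H^s$ by Kato--Ponce plus a composition estimate for $D_h\Theta$, noting that the separate $L^2$ hypothesis on $\partial_z^\alpha D_h\Theta$ is not needed since those factors always come multiplied by derivatives of $h$. That is exactly the structure of the paper's proof.

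There is, however, an internal inconsistency in the $\dot H^s$ part of your write-up that would stop the argument from closing if taken literally. After Kato--Ponce, the ``high-order'' factor you must control is $\| D_h\Theta \circ (\iota, h_\eta) \|_{\dot H^s}$, \emph{not} $\dot H^{s-1}$, because the second Kato--Ponce term is $\|h_1-h_2\|_{L^\infty} \, \|D_h\Theta\circ(\iota,h_\eta)\|_{\dot H^s}$. (Equivalently, in the Leibniz expansion, the worst type-(b) monomial puts \emph{all} $s$ derivatives on the composite factor.) Controlling that $\dot H^s$ norm via Lemma~\ref{lem:composition} would cost $(1+\|h_\eta\|_{H^s}^s)$, which is super-linear in the top-order norm and would not give an estimate of the stated form. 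The correct tool — and the one the paper uses — is Lemma~\ref{lem:composition-critplusone} applied to $D_h\Theta$ (legitimate because $s(D_h\Theta)\geq s$), which gives the \emph{linear} prefactor $(1+\|h_\eta\|_{H^s})\Psi^{(s)}_{D_h\Theta}(\|h_\eta\|_{H^{s-1}})$; this is precisely where the additional term $(\|h_1\|_{H^s}+\|h_2\|_{H^s})\|h_1-h_2\|_{H^{s_\ast}}$ comes from. You do mention this $(1+\|h_\eta\|_{H^s})$ prefactor later, so you seem to know the right mechanism, but your earlier claim that one can get by with $\|D_h\Theta\circ(\iota,h_\eta)\|_{\dot H^{s-1}}$ via Lemma~\ref{lem:composition} is wrong and contradicts it. Also, the preliminary Leibniz/Faà di Bruno expansion you sketch before invoking Kato--Ponce is redundant: Kato--Ponce \emph{is} the packaged replacement for that bookkeeping, and the paper applies it directly to the product $\left (D_h\Theta\circ(\iota,h_\eta)\right ) \cdot (h_1-h_2)$ without any prior expansion.
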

\begin{proof}
We first use the fundamental theorem of calculus 
 to write, for any $z \in \dom$, 
\be
\Theta(z, h_1(z)) - \Theta(z, h_2(z)) = \int_0^1 D_h \Theta(z, \eta h_1(z) + (1-\eta) h_2(z)) \cdot (h_1(z) - h_2(z)) \dd \eta .
\ee
Then, taking the $L^2$ norm with respect to $z$, by Minkowski's integral inequality we obtain
\be
\| \Theta \circ ( \iota, h_1) - \Theta \circ (\iota, h_2) \|_{L^2} \leq \int_0^1 \| D_h \Theta \circ (\iota, \eta h_1 + (1-\eta) h_2)\|_{L^\infty} \| h_1 - h_2 \|_{L^2} \dd \eta .
\ee
Then, since $\| \eta h_1 + (1-\eta) h_2 \|_{L^\infty} \leq \max_i \| h_i \|_{L^\infty}$ for all $\eta \in [0,1]$, it follows that
\begin{align}
\| \Theta \circ (\iota, h_1) - \Theta \circ (\iota, h_2) \|_{L^2} & \leq  \| \Theta \|_{C^1( \dom \times B_{\max_i \| h_i \|_{L^\infty}})} \| h_1 - h_2 \|_{L^2} \\
& \leq \Phi_\Theta^{(1)} \left ( \max_i \| h_i \|_{L^\infty} \right ) \,  \| h_1 - h_2 \|_{L^2} .
\end{align}

For higher order derivatives we have similarly the following bound
\be
\| \Theta \circ (\iota, h_1) - \Theta \circ (\iota, h_2) \|_{\dot H^s} \leq \int_0^1 \| D_h \Theta \circ (\iota, \eta h_1 + (1-\eta) h_2) \, \cdot \,( h_1 - h_2) \|_{\dot H^s} \dd \eta .
\ee
By the Kato--Ponce inequality (Lemma~\ref{lem:Kato-Ponce}),
\begin{multline}
\| \Theta \circ (\iota, h_1) - \Theta \circ (\iota, h_2) \|_{\dot H^s} \leq C_{s,d} \| h_1 - h_2 \|_{\dot H^s} \int_0^1 \| D_h \Theta \circ (\iota, \eta h_1 + (1-\eta) h_2) \|_{L^\infty}  \dd \eta \\
+ C_{s,d}  \| h_1 - h_2 \|_{L^\infty} \int_0^1 \| D_h \Theta \circ (\iota, \eta h_1 + (1-\eta) h_2) \|_{\dot H^s}  \dd \eta .
\end{multline}
Then, since $\| \eta h_1 + (1-\eta) h_2 \|_{L^\infty} \leq \max_i \| h_i \|_{L^\infty}$ for all $\eta \in [0,1]$, it follows that
\begin{align}
 \int_0^1 \| D_h \Theta \circ (\iota, \eta h_1 + (1-\eta) h_2) \|_{L^\infty} \dd \eta & \leq \| \Theta \|_{C^1(\dom \times B_{\max_i \| h_i \|_{L^\infty}})} \\
 & \leq \Phi_\Theta^{(1)} \left ( \max_i \| h_i \|_{L^\infty} \right ).
\end{align}
To estimate $\| D_h \Theta \circ (\iota, \eta h_1 + (1-\eta) h_2) \|_{\dot H^s}$, we apply Lemma~\ref{lem:composition-critplusone} to obtain
\begin{align}
\| D_h \Theta \circ (\iota, \eta h_1 + (1-\eta) h_2) \|_{\dot H^s} & \leq ( 1 +  \| \eta h_1 + (1-\eta) h_2 \|_{H^s}  ) \, \Psi_{D_h \Theta}^{(s)} \left ( \| \eta h_1 + (1-\eta) h_2 \|_{H^{s-1}} \right ) \\
& \leq (1 + \eta \| h_1 \|_{H^s} + (1-\eta) \| h_2 \|_{H^s}) \, \Psi_{D_h \Theta}^{(s)} \left ( \max_i \| h_i \|_{H^{s-1}} \right ) .
\end{align}
Hence
\be
\int_0^1 \| D_h \Theta \circ (\iota, \eta h_1 + (1-\eta) h_2) \|_{\dot H^s} \dd \eta \leq ( 1 + \frac{1}{2} \left ( \| h_1 \|_{H^s} + \| h_2 \|_{H^s} \right ) ) \, \Psi_{D_h \Theta}^{(s)} \left ( \max_i \| h_i \|_{H^{s-1}} \right ) .
\ee
As in the proof of Lemma~\ref{lem:composition-critplusone}, $L^2$ estimates on $\partial^\alpha_z D_h \Theta$ are not actually required for this estimate, since these quantities only appear in products with derivatives of $h_1 - h_2$ and $\eta h_1 + (1-\eta) h_2$.

Combining these estimates gives
\begin{align}
\| \Theta \circ (\iota, h_1) & - \Theta \circ (\iota, h_2) \|_{\dot H^s}  \leq C_{s,d} \Phi_\Theta^{(1)} \left ( \max_i \| h_i \|_{L^\infty} \right ) \| h_1 - h_2 \|_{\dot H^s} \\
& + C_{s,d} \left( 1 +   \| h_1 \|_{H^s} + \| h_2 \|_{H^s}  \right) \,  \|h_1 - h_2 \|_{L^\infty} \, \Psi_{D_h \Theta}^{(s)} \left ( \max_i \| h_i \|_{H^{s-1}} \right ) .
\end{align}
Finally, the estimates $ \|h_1 - h_2 \|_{L^\infty_z} \leq C_d  \|h_1 - h_2 \|_{H^{s_\ast}_z}$ and $ \|h_i \|_{L^\infty_z} \leq C_d  \|h_i \|_{H^{s_\ast}_z}$ complete the proof.

\end{proof}

Next, we will rephrase these composition estimates in terms of our adapted norms $\| \cdot \|_{\Gamma^s(t)}$.
In the present case we can obtain this straightforwardly from the estimates in standard norms by using the flow map $\phi$ introduced in Definition~\ref{def:FlowMap}.

\begin{lemma} \label{lem:CompositionFlow}
Let $s$ be an integer such that $s \geq s_{\ast} + 1$.
 Assume that $\Theta : \dom \times \R^n \to \R^l $ satisfies Assumption~\ref{hyp:Theta} for $s(\Theta) \geq s$.
 Then there exist non-decreasing functions $\Psi_\Theta^{(s)}, \Lambda_\Theta^{(s)} : \R_+ \to \R_+$ (depending on $\Theta$, $s$ and the dimension $d$) such that the following estimates holds:
\begin{enumerate}[(i)]
\item \label{item:uniform} Let $h: \dom  \to \R^n$ be a function of class $H^{s}$. Then
\be
\| \Theta \circ (\iota_\dom, h) \|_{\Gamma^s(t)} \leq (1 + \| h \|_{\Gamma^s(t)}) \,  (1+ t)^s \Psi_{\Theta}^{(s)} \left ( \| h \|_{\Gamma^{s-1}(t)} \right )  \quad \forall t \in [0,T] .
\ee
\item \label{item:difference} Let $h_1, h_2: \dom  \to \R^n$ be $H^{s}$ functions. Then
\begin{align}
\| \Theta \circ (\iota, h_1)  - \Theta \circ (\iota, h_2) \|_{ \Gamma^s(t)} & \leq \left ( \| h_1 - h_2 \|_{\Gamma^s(t)} + (\| h_1 \|_{\Gamma^s(t)} + \|h_2 \|_{\Gamma^s(t)}) \| h_1 - h_2 \|_{\Gamma^{s_\ast}(t)} \right ) \\
& \qquad \qquad \times (1+t)^s \Lambda_\Theta^{(s)} \left (  \max_i \| h_i \|_{\Gamma^{s-1}(t)}\right)
\end{align}
\end{enumerate}
\end{lemma}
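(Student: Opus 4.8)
The plan is to deduce both estimates from their counterparts in the standard Sobolev norms---Lemma~\ref{lem:composition-critplusone} for part~(i) and Lemma~\ref{lem:Hs-Lipschitz} for part~(ii)---by conjugating the composition operator with the transport flow map $\phi(t,\cdot)$ of Definition~\ref{def:FlowMap}. Fix $t\in[0,T]$, write $\phi_t:=\phi(t,\cdot)$, the affine, volume-preserving diffeomorphism $\phi_t(x,v)=(x+tv,v)$, with $D_z\phi_t = A(t)$ of norm $\le 1+t$ and $D_z\phi_t^{-1}$ of norm $\le 1+t$, and set $\hat\Theta_t(z,\eta):=\Theta(\phi_t(z),\eta)$. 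The two ingredients are the elementary identity
\be
\bigl(\Theta\circ(\iota_\dom,h)\bigr)\circ\phi_t = \hat\Theta_t\circ(\iota_\dom,\,h\circ\phi_t),
\ee
and the relation $\|f\|_{\Gamma^s(t)}=\|f\circ\phi_t\|_{H^s_z}$ from \eqref{eq:FlowSobolev} (together with its homogeneous and lower-order versions), which together reduce everything to Sobolev estimates for $\hat\Theta_t\circ(\iota_\dom, h\circ\phi_t)$ with $h\circ\phi_t\in H^s_z$.

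The first---and essentially only non-routine---step is to verify that $\hat\Theta_t$ again satisfies Assumption~\ref{hyp:Theta}, with $s(\hat\Theta_t)=s(\Theta)$ and with the constants of that assumption inflated only by a factor $C_{s',d}(1+t)^{s'}$. For the $C^{s'}$ bounds this is the chain rule: since $\phi_t$ is affine, each $z$-derivative of $\hat\Theta_t$ contributes one factor drawn from the constant matrix $A(t)$, so $\|\hat\Theta_t\|_{C^{s'}(\dom\times B_r(0))}\le C_{s',d}(1+t)^{s'}\|\Theta\|_{C^{s'}(\dom\times B_r(0))}$, the $C^0$ norm being unchanged because $\phi_t(\dom)=\dom$. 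For the $L^2$ bounds on pure $z$-derivatives composed with an $L^2\cap L^\infty$ function, again because $\phi_t$ is affine one has $\partial_z^\alpha\hat\Theta_t(z,\eta)=\sum_{|\alpha'|=|\alpha|}c_{\alpha,\alpha'}(A(t))\,(\partial_z^{\alpha'}\Theta)(\phi_t(z),\eta)$ with $|c_{\alpha,\alpha'}(A(t))|\le C_{|\alpha|,d}(1+t)^{|\alpha|}$, and the change of variables $w=\phi_t(z)$ (which leaves the $L^2$ and $L^\infty$ norms of any function invariant) then reduces the bound to the corresponding one for $\Theta$. Carrying these inflated constants through the proofs of Lemma~\ref{lem:composition-critplusone} and Lemma~\ref{lem:Hs-Lipschitz} shows that the associated non-decreasing functions satisfy $\Psi^{(s)}_{\hat\Theta_t}\le C_{s,d}(1+t)^s\Psi^{(s)}_\Theta$ and $\Lambda^{(s)}_{\hat\Theta_t}\le C_{s,d}(1+t)^s\Lambda^{(s)}_\Theta$ (the order-zero Lipschitz constant $\Lambda^{(0)}$ picks up no $t$-factor).

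It then remains to assemble the pieces. For part~(i) we apply Lemma~\ref{lem:composition-critplusone} to $\hat\Theta_t$ and $h\circ\phi_t$ to bound the homogeneous seminorm, and control the $L^2$ part by $\|\hat\Theta_t\circ(\iota_\dom,h\circ\phi_t)\|_{L^2}\le\Phi^{(0)}_{\hat\Theta_t}\bigl(\|h\circ\phi_t\|_{L^2\cap L^\infty}\bigr)=\Phi^{(0)}_\Theta\bigl(\|h\|_{L^2\cap L^\infty}\bigr)$; since $\|h\|_{L^2}\le\|h\|_{\Gamma^{s-1}(t)}$ and $\|h\|_{L^\infty}=\|h\circ\phi_t\|_{L^\infty}\le C_d\|h\circ\phi_t\|_{H^{s-1}_z}=C_d\|h\|_{\Gamma^{s-1}(t)}$ (using $s-1\ge s_\ast$), every norm of $h\circ\phi_t$ that appears can be rewritten in terms of $\|h\|_{\Gamma^s(t)}$ and $\|h\|_{\Gamma^{s-1}(t)}$, and absorbing $\Phi^{(0)}_\Theta$ and the dimensional constants into a redefined $\Psi^{(s)}_\Theta$ gives the stated inequality. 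Part~(ii) is identical, now invoking Lemma~\ref{lem:Hs-Lipschitz} for $\hat\Theta_t$, $h_1\circ\phi_t$ and $h_2\circ\phi_t$, translating the norms of $h_i\circ\phi_t$ and $(h_1-h_2)\circ\phi_t$ back via \eqref{eq:FlowSobolev}, and noting that the resulting order-zero term is dominated by the claimed right-hand side because $\|\cdot\|_{\Gamma^{s_\ast}(t)}\le\|\cdot\|_{\Gamma^s(t)}$. The main obstacle, as noted, is the stability of Assumption~\ref{hyp:Theta} under passage to $\hat\Theta_t$---in particular that the $L^2$-type condition survives the change of variables---but this is handled cleanly precisely because $\phi_t$ is affine and preserves volume.
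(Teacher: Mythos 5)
Your proof is correct and follows essentially the same strategy as the paper's: define $\tilde\Theta_t=\Theta\circ(\phi(t,\cdot),\iota_{\mathbb{R}^n})$ (your $\hat\Theta_t$), verify that it satisfies Assumption~\ref{hyp:Theta} with constants inflated by $(1+t)^{s'}$ (using the chain rule for the $C^{s'}$ bounds and the volume-preserving change of variables $w=\phi_t(z)$ for the $L^2$ bounds), then transfer via \eqref{eq:FlowSobolev} and apply Lemma~\ref{lem:composition-critplusone} for part (i) and Lemma~\ref{lem:Hs-Lipschitz} for part (ii). You are in fact slightly more explicit than the paper in spelling out the change-of-variables step for the $L^2$-type condition and in handling the zero-order piece of the non-homogeneous $\Gamma^s$ norm.
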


\begin{rmk}
If the function $\Theta$ does not depend explicitly on the $z$ variable, then the additional factors of $(1+t)^s$ do not appear.
\end{rmk}

\begin{proof}
Using the identity \eqref{eq:FlowSobolev}, we have
\be
\| \Theta \circ (\iota_\dom, h) \|_{\dot \Gamma^s(t)} = \| \Theta \circ (\phi, h \circ \phi ) \|_{\dot H^s_z} .
\ee

We will check that  the function $\tilde \Theta : = \Theta \circ (\phi(t, \cdot), \iota_{\R^n})$ 
satisfies Assumption~\ref{hyp:Theta}. Indeed, for any multi-index $\alpha$ of order $|\alpha| = s'$,
\be
\left | \partial^\alpha_z \tilde \Theta (z,r) \right | = \left | (A(t)D_z)^\alpha \Theta (z,r) \right | \leq C_\alpha (1+t)^{s'}  \sum_{|\beta|=s'} \left | \partial_{z}^{\beta} \Theta (z,r)  \right |, 
\ee

Hence
\be
\| \tilde \Theta \|_{C^s(\dom \times B_r(0))} \leq C_\alpha (1+t)^{s} \| \Theta \|_{C^s(\dom \times B_r(0))} \leq  C_\alpha (1+t)^{s} \Phi_{\Theta}^{(s)}(r) 
\ee
and, for any function $h \in L^2 \cap L^\infty$ and multi-index $\alpha$ of order $|\alpha| = s'$,
\begin{align}
\left \| \partial^\alpha_z \tilde \Theta \circ (\iota_{\dom }, h) \right \|_{L^2_z} &\leq C_\alpha (1+t)^{s'} \sum_{|\beta|=s'} \left \| \partial_{z}^{\beta} \Theta \circ (\iota_\dom, h)  \right \|_{L^2_z} \\
& \leq C_\alpha (1+t)^{s'} \Phi_\Theta^{(s')}( \| h \|_{L^2_z \cap L^\infty_z} ).
\end{align}
Thus $\tilde \Theta$ satisfies Assumption~\ref{hyp:Theta} with $s(\tilde \Theta) = s(\Theta)$ and
\be
\Phi_{\tilde \Theta}^{(s')} \lesssim (1+t)^{s'} \Phi_\Theta^{(s')} \quad \forall s ' \leq s(\Theta) .
\ee

Then, by Lemma~\ref{lem:composition-critplusone},
\begin{align}
\| \Theta \circ (\iota_\dom, h) \|_{\dot \Gamma^s(t)} & \leq (1+ \| h \circ \phi(t, \cdot)  \|_{H^{s}_z}) \,\Psi_{\tilde \Theta}^{(s)} \left ( \| h \circ \phi(t, \cdot) \|_{H^{s-1}_z} \right ) \\
& \leq C_{s,d} (1+t)^{s} (1+ \| h \|_{\Gamma^{s}(t)}) \,\Psi_{\Theta}^{(s)} \left ( \| h  \|_{\Gamma^{s-1}(t)} \right ) .
\end{align}
We redefine the function $\Psi_{\Theta}^{(s)}$ to absorb the uniform constant $C_{s,d}$, and the statement \eqref{item:uniform} follows. Statement \eqref{item:difference} can be proved by a similar argument using Lemma~\ref{lem:Hs-Lipschitz}.

\end{proof}

As the final part of this section, we may now complete the proof of Proposition~\ref{prop:HRegularity}.

\begin{proof}[Proof of Proposition~\ref{prop:HRegularity}]
It follows directly from Assumption~\ref{hyp:main} that $H$ satisfies the assumptions of Lemma~\ref{lem:CompositionFlow}. 
We will show that the same is true for the function $J$ (recall Definition~\ref{def:J}).

It is clear that $J$ is continuous in all arguments and $C^{s+1}$ in $(z,m,p)$. For the $L^2$ estimate, if $|\alpha| \leq 1$ with $\alpha_m = 0$ and $0 \leq \beta \leq s$, then 
$ \partial^\alpha_{z,p} \partial^\beta_z J = m \partial^\alpha_{z,p} \partial^\beta_z D_p H$. Then, for each $t \in [0,T]$ and any function $m \in (L^2_z \cap L^\infty_z)(\dom)$,
\begin{align}
\|  \partial^\alpha_{z,p} \partial^\beta_z J \circ (t, \iota_{\dom}, m, p)\|_{L^2_z} & = \| m \, \partial^\alpha_{z,p} \partial^\beta_z D_p H  \circ (t, \iota_{\dom}, m, p) \|_{L^2_z} \\
& \leq \| m \|_{L^2_z} \| \partial^\alpha_{z,p} \partial^\beta_z D_p H \circ (t, \iota_{\dom}, m, p) \|_{L^\infty} \\
& \leq \| H(t, \cdot, \cdot, \cdot) \|_{C^{s+2}_{(z,m,p)}(\dom \times B_{\|(m,p)\|_{L^\infty_z}})} \| m \|_{L^2_z} \\
& \leq \Phi_H(\|(m,p)\|_{L^\infty_z} ) \| m \|_{L^2_z}  .
\end{align}
Otherwise, if $\partial^\alpha = \partial_m$, then $ \partial_m \partial^\beta_z J = \partial^\beta_z D_p H + m \partial_m \partial^\beta_z D_p H$. Hence
\be
\left \| \partial_m  \partial^\beta_z J  \circ (t, \iota_{\dom}, m, p) \right \|_{L^2_z} \leq \| \partial^\beta_z D_p H \circ (t, \iota_{\dom}, m, p) \|_{L^2_z} + \|  m \,  \partial_m \partial^\beta_z D_{p} H \circ (t, \iota_{\dom}, m, p) \|_{L^2_z} . 
\ee
As before, we have
\begin{align}
\|  m \,  \partial_m \partial^\beta_z D_{p} H \circ (t, \iota_{\dom}, m, p) \|_{L^2_z} & \leq \| H(t, \cdot, \cdot, \cdot) \|_{C^{s+2}_{(z,m,p)}(\dom \times B_{\|(m,p)\|_{L^\infty_z}}) } \| m \|_{L^2_z} \\
& \leq \Phi_H(\|(m,p)\|_{L^\infty_z} ) \| m \|_{L^2_z}  .
\end{align}
By the $L^2$ estimates for $D_p H$ in Assumption~\ref{hyp:main} we have
\be
\| \partial^\beta_z D_p H \circ (t, \iota_{\dom}, m, p) \|_{L^2_z} \leq \Phi_H(\|(m,p)\|_{L^\infty_z} ).
\ee
Hence
\be
\left \| \partial_m  \partial^\beta_z J  \circ (t, \iota_{\dom}, m, p) \right \|_{L^2_z} \leq (1 + \|  m \|_{L^2_z} ) \, \Phi_H \left ( \| (m, p) \|_{L^2_z \cap L^\infty_z} \right ) .
\ee
Thus $J$ also satisfies the assumptions of Lemma~\ref{lem:CompositionFlow}. Applying this lemma to $H$ and $J$ and taking the $L^2_t$ norm completes the proof.

\end{proof}

\section{Linear estimates}\label{linearSection}

The goal of this section is to establish energy estimates for solutions of the initial value problem for the linear Kolmogorov equation with source:
\be 
\left(\partial_t + v \cdot D_x - \frac{1}{2} \Delta_v\right) w = h, \qquad w \vert_{t=0} = w_0 ,
\ee
and to apply them in the setting of a decoupled MFG system that anticipates the fixed point argument we will use to construct solutions.
In Subsection \ref{linearEquation} below we develop estimates for solutions of a single linear equation.
Then, in Subsection \ref{sec:MFGEstimates}, we consider the decoupled MFG-type equations, obtaining bounds on both individual solutions and differences between solutions with different source terms.

\subsection{Estimates for solutions of a Kolmogorov equation}\label{linearEquation}

In the following lemma we recall an elementary estimate for solutions of the Kolmogorov equation that will be the key tool in our construction of solutions to the Mean Field Games system \eqref{eq:MFG}.
We first define the space of solutions we will consider. For any $s \geq 0$, let $X^s$ denote the set
\be
X^s : = \left \{ w \in C^0 \left ([0,T] ; H^s_z(\dom) \right ) : D_v w \in L^2 \left ((0,T) ; H^s_z(\dom) \right ) \right \} .
\ee

$X^s$ forms a Banach space when equipped with the norm $\| \cdot \|_{X^s}$, defined by
\be
\| w \|_{X^s}^2 : = \sup_{t \in [0,T]}\| w(t, \cdot) \|^2_{\Gamma_z^s(t)} + \int_0^T  \| D_v w(\tau, \cdot) \|_{\Gamma_z^s(\tau)}^2 \dd \tau .
\ee
We note that $\| \cdot \|_{X^s}$ is equivalent to the standard norm
$
\left ( \| w \|_{C^0_t H^s_z}^2 + \| D_v w \|_{L^2_t H^s_z}^2 \right )^{1/2}.
$

\begin{lemma} \label{lem:apriori}

Let $w \in X^0$ satisfy
\be \label{eq:Kol}
\left(\partial_t + v \cdot D_x - \frac{1}{2} \Delta_v\right) w = h_1 + \div_v h_2 
\ee
in the sense of distributions, where $h_1, h_2 \in L^2([0,T] \times \dom)$.
Then
\be
\sup_{t \in [0,T]}\| w(t) \|_{L^2_z}^2 + \int_0^T \| D_v w(t) \|_{L^2_{z}}^2 \dd t  \leq 2  \| w(0) \|_{L^2_z}^2 
 + 4 \int_0^T \left[T \|  h_1 (t) \|_{L^2_{z}}^2+  \| h_2 (t) \|_{L^2_{z}}^2\right] \dd t .
\ee
\end{lemma}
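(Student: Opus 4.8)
The plan is to prove the $L^2$ energy estimate for the Kolmogorov equation \eqref{eq:Kol} by the standard energy method, multiplying the equation by $w$ and integrating over $\dom$. First I would compute $\frac{1}{2}\frac{\dd}{\dd t}\| w(t) \|_{L^2_z}^2$. The transport term $v \cdot D_x w$ contributes nothing: since $\div_x (v) = 0$ (the vector field $v$ does not depend on $x$), we have $\int_{\dom} (v \cdot D_x w) w \dd z = \frac{1}{2}\int_{\dom} v \cdot D_x (w^2) \dd z = 0$ after integration by parts (using decay at infinity when $\M = \R^d$, or periodicity when $\M = \TT^d$). The diffusion term gives the good sign: $-\frac{1}{2}\int_{\dom}(\Delta_v w) w \dd z = \frac{1}{2}\int_{\dom} |D_v w|^2 \dd z$. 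The source terms on the right are handled by integration by parts in $v$ for the $\div_v h_2$ piece: $\int_{\dom}(\div_v h_2) w \dd z = -\int_{\dom} h_2 \cdot D_v w \dd z$.

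Putting this together yields the differential inequality
\be
\frac{1}{2}\frac{\dd}{\dd t}\| w(t) \|_{L^2_z}^2 + \frac{1}{2}\| D_v w(t) \|_{L^2_z}^2 = \int_{\dom} h_1 w \dd z - \int_{\dom} h_2 \cdot D_v w \dd z .
\ee
For the first term on the right I would use $\int_{\dom} h_1 w \dd z \leq \| h_1(t) \|_{L^2_z}\| w(t) \|_{L^2_z}$ and then Young's inequality, keeping the $\| w \|_{L^2_z}$ factor with a coefficient that will not require Gr\"onwall — more precisely bounding by $\frac{1}{2}\| h_1(t)\|_{L^2_z}^2 + \frac{1}{2}\| w(t)\|_{L^2_z}^2$, or alternatively using the crude bound $\| w(t)\|_{L^2_z}^2 \le \sup_{\tau}\| w(\tau)\|_{L^2_z}^2$ directly. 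For the second term I would use $\int_{\dom} h_2 \cdot D_v w \dd z \leq \| h_2(t)\|_{L^2_z}\| D_v w(t)\|_{L^2_z} \leq \| h_2(t)\|_{L^2_z}^2 + \frac{1}{4}\| D_v w(t)\|_{L^2_z}^2$, so that the $\frac14\|D_v w\|^2$ can be absorbed into the left-hand side dissipation term.

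Then I would integrate in time from $0$ to $t$. The key technical point is that the factor $T$ in front of $\int_0^T \| h_1\|_{L^2_z}^2$ in the statement arises from handling $\int_0^t \| w(\tau)\|_{L^2_z}^2 \dd\tau$: bounding $\| w(\tau)\|_{L^2_z}^2 \le \sup_{[0,T]}\| w\|_{L^2_z}^2$ and integrating gives a factor of $t \le T$, which one then absorbs provided this contributes at most, say, $\tfrac12 \sup_{[0,T]}\|w\|_{L^2_z}^2$ — but since $T$ may be large this naive route fails. The cleaner approach, which I would follow, is to first apply Young's inequality to the $h_1$ term in the form $\int_{\dom} h_1 w\,\dd z \le \tfrac{T}{2}\|h_1(t)\|_{L^2_z}^2 + \tfrac{1}{2T}\|w(t)\|_{L^2_z}^2$, integrate in time to get a term $\tfrac{1}{2T}\int_0^t \|w(\tau)\|_{L^2_z}^2\,\dd\tau \le \tfrac12 \sup_{[0,T]}\|w\|_{L^2_z}^2$, and absorb this half into the left-hand side; this produces exactly the claimed $T \int_0^T\|h_1\|_{L^2_z}^2$ and the factors $2$ and $4$. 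Taking the supremum over $t \in [0,T]$ on the left finishes the proof.

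The main obstacle — really the only subtlety — is justifying the integration-by-parts manipulations rigorously for a general $X^0$ distributional solution, in particular the identity $\frac{\dd}{\dd t}\|w\|_{L^2_z}^2 = 2\langle \partial_t w, w\rangle$, the vanishing of the transport term, and the decay at spatial/velocity infinity in the case $\M = \R^d$. Since the paper calls this an ``elementary estimate'' that is ``recalled,'' I would handle this by a standard approximation/regularization argument (mollify in $z$, use that $\gamma_j$ and the mollifier commute suitably, pass to the limit), or simply remark that the formal computation can be justified by density and the a priori nature of the bound; I would not belabor it.
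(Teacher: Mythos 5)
Your overall strategy is exactly the paper's: test the equation with $w$, integrate by parts (with the transport term dropping out and the diffusion giving $\tfrac12\|D_vw\|_{L^2_z}^2$), integrate $h_2\cdot D_vw$ by parts in $v$, then use H\"older, Young and an absorption to close the estimate, and defer rigorous justification of the energy identity to an approximation argument (which the paper carries out in Appendix~A by mollifying in $(t,z)$ and cutting off). So the method and the key technical point you flag both match the paper.

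There is, however, a concrete arithmetic gap in the ``clean route'' you propose for the absorption. Starting from the unscaled identity
\[
\tfrac12\tfrac{\dd}{\dd t}\|w\|_{L^2_z}^2 + \tfrac12\|D_vw\|_{L^2_z}^2
= \int h_1 w - \int h_2\cdot D_vw ,
\]
and choosing Young with $\int h_1 w \le \tfrac{T}{2}\|h_1\|_{L^2_z}^2 + \tfrac{1}{2T}\|w\|_{L^2_z}^2$, you obtain after integrating in time
\[
\tfrac12\|w(t)\|_{L^2_z}^2 + \tfrac14\int_0^t\|D_vw\|_{L^2_z}^2
\le \tfrac12\|w(0)\|_{L^2_z}^2 + \tfrac{T}{2}\int_0^T\|h_1\|_{L^2_z}^2 + \tfrac12\sup_{[0,T]}\|w\|_{L^2_z}^2 + \int_0^T\|h_2\|_{L^2_z}^2 .
\]
Now evaluating at $t=t^\ast$ where $\|w(t^\ast)\|_{L^2_z}^2=\sup_{[0,T]}\|w\|_{L^2_z}^2$, the $\tfrac12\|w(t^\ast)\|^2$ on the left and the $\tfrac12\sup\|w\|^2$ on the right cancel exactly, and you are left with no bound on $\sup\|w\|^2$ whatsoever. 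The coefficient in Young is too aggressive by a factor of $2$: in your normalization (LHS coefficient $\tfrac12$) you need the $\sup$ coefficient on the right to be strictly smaller than $\tfrac12$, e.g.\ use $\int h_1 w \le T\|h_1\|^2 + \tfrac{1}{4T}\|w\|^2$ so the sup contribution is $\tfrac14\sup\|w\|^2$. The paper sidesteps this by multiplying the energy equality by $2$ at the outset, so the left-hand side carries $1\cdot\|w(t)\|^2$ while the Young term contributes only $\tfrac12\sup\|w\|^2$, leaving slack for the absorption. (Note that, strictly speaking, neither argument yields precisely the constants $2$ and $4$: the ``take supremum and multiply by two'' step in the paper also implicitly treats $\sup_t[a(t)+b(t)]$ as $\sup_t a(t)+\sup_t b(t)$, which fails in general. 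But the estimate is certainly true with some universal constants, and only the structure of the bound --- not the numerical prefactors --- is used in the fixed-point argument of Section~4, so this is harmless.)
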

\begin{proof}
Formally test the equation \eqref{eq:Kol} with the function $w$; after integrating by parts in the diffusive term and the term involving $h_2$, we obtain
\be
\frac{1}{2} \| w(t) \|_{L^2_{z}}^2 + \frac{1}{2} \int_0^t \| D_v w(\tau) \|_{L^2_{z}}^2 \dd \tau  = \frac{1}{2} \| w(0) \|_{L^2_{z}}^2 + \int_0^t \int_{\dom}  h_1   w \dd z - \int_0^t \int_{\dom}  h_2 \cdot D_v w \dd z\dd \tau .
\ee
This can be made rigorous by an approximation argument, since $w \in C^0_t L^2_{x,v} \cap L^2_{t,x}\dot H^1_v$ (see Appendix~\ref{appendixSection} for the details).

By H\"older's inequality,
\be
 \| w(t) \|_{L^2_{z}}^2 + \int_0^t \| D_v w(\tau) \|_{L^2_{z}}^2 \dd \tau  \leq \| w(0) \|_{L^2_{z}}^2
+ 2 \int_0^t \| h_1(\tau) \|_{L^2_z} \| w (\tau) \|_{L^2_z} \dd \tau 
+ 2 \| h_2 \|_{L^2_{t,z}} \| D_v w \|_{L^2_{t,z}} .
\ee
By Young's inequality,
\begin{multline}
 \| w(t) \|_{L^2_z}^2 + \int_0^t \| D_v w(\tau) \|_{L^2_z}^2 \dd \tau  \leq \| w(0) \|_{L^2_z}^2 
+ \frac{1}{2 t}  \int_0^t \| w(\tau) \|_{L^2_z}^2 \dd \tau + 2 t\int_0^t \| h_1 (\tau) \|_{L^2_z}^2 \dd \tau \\
+ \frac{1}{2}  \int_0^t \| D_v w(\tau) \|_{L^2_z}^2 \dd \tau + 2 \int_0^t \| h_2 (\tau) \|_{L^2_z}^2 \dd \tau .
\end{multline}
Rearranging terms gives
\begin{multline}
\| w(t) \|_{L^2_z}^2 + \frac{1}{2} \int_0^t \| D_v w(\tau) \|_{L^2_z}^2 \dd \tau  \leq  \frac{1}{2}  \sup_{\tau \in [0,t]} \| w(\tau) \|_{L^2_z}^2 
+ \| w(0) \|_{L^2_z}^2  \\
+ 2 \int_0^t t \| h_1 (\tau) \|_{L^2_z}^2 +  \| h_2 (\tau) \|_{L^2_z}^2 \dd \tau. 
\end{multline}
We next take supremum over $t \in [0,T]$ and multiply by two to obtain
\begin{multline}
2 \sup_{t \in [0,T]}\| w(t) \|_{L^2_z}^2 + \int_0^T \| D_v w(\tau) \|_{L^2_z}^2 \dd \tau  \leq \sup_{t \in [0,T]} \| w(t) \|_{L^2_z}^2 \\ 
+ 2 \| w(0) \|_{L^2_z}^2 
 + 4 \int_0^T T \| h_1 (\tau) \|_{L^2_z}^2 \dd \tau + \| h_2 (\tau) \|_{L^2_z}^2 \dd \tau .
 \end{multline}
We rearrange once more to absorb the supremum term on the right hand side, which completes the proof:
\be
\sup_{t \in [0,T]}\| w(t) \|_{L^2_z}^2 +  \int_0^T \| D_v w(t) \|_{L^2_z}^2 \dd t  \leq  2 \| w(0) \|_{L^2_z}^2 
 + { 4} \int_0^T T \| h_1 (t) \|_{L^2_z}^2 + \| h_2 (t) \|_{L^2_z}^2 \dd t .
\ee
\end{proof}

\begin{cor} \label{cor:apriori-higher}
Let $w \in X^s$ ($s \in \mb{N}$) satisfy \eqref{eq:Kol}
in the sense of distributions, where $h_1, h_2 \in L^2_{t}H^s_{z}$.
Then
\be
\| w \|_{X^s}^2 \leq 2 \| w(0, \cdot) \|_{H^s_z}^2 + 4 \left ( T \| h_1 \|_{L^2_t \Gamma^s_z}^2 +  \| h_2 \|_{L^2_t \Gamma^s_z}^2 \right ) .
\ee

\end{cor}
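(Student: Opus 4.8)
The plan is to reduce to Lemma~\ref{lem:apriori} by commuting the vector fields $\gamma^\beta$ through the equation. First I would fix a multi-index $\beta$ with $|\beta| = s$ and apply $\gamma^\beta(t)$ to \eqref{eq:Kol}. By the commutation relations in Remark~\ref{rmk:commute}, each $\gamma_j$ commutes with $\partial_t + v \cdot D_x - \frac{1}{2}\Delta_v$ and with every $\partial_{v_k}$, so $\gamma^\beta w$ solves
\be
\left(\partial_t + v \cdot D_x - \frac{1}{2}\Delta_v\right)\gamma^\beta w = \gamma^\beta h_1 + \div_v(\gamma^\beta h_2)
\ee
in the sense of distributions. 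Since $w \in X^s$ and $h_1, h_2 \in L^2_t H^s_z$, with $|\beta| = s$, one checks that $\gamma^\beta w \in X^0$ (using $D_v \gamma^\beta w = \gamma^\beta D_v w$, as $\gamma_j$ commutes with $\partial_{v_k}$) and that $\gamma^\beta h_1, \gamma^\beta h_2 \in L^2_{t,z}$, so Lemma~\ref{lem:apriori} applies to $\gamma^\beta w$ and gives
\be
\sup_{t \in [0,T]} \|\gamma^\beta w(t)\|_{L^2_z}^2 + \int_0^T \|\gamma^\beta D_v w(t)\|_{L^2_z}^2 \dd t \leq 2\|\gamma^\beta w(0)\|_{L^2_z}^2 + 4\int_0^T\left(T\|\gamma^\beta h_1(t)\|_{L^2_z}^2 + \|\gamma^\beta h_2(t)\|_{L^2_z}^2\right)\dd t .
\ee

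Next I would sum this estimate over all $\beta$ with $|\beta| = s$. On the right-hand side, $\sum_{|\beta|=s}\|\gamma^\beta f(t)\|_{L^2_z}^2 = \|f(t)\|_{\dot\Gamma^s(t)}^2$ by definition, and at $t = 0$ the operators $\gamma^\beta(0)$ reduce to the ordinary derivatives $\partial^\beta$ (the flow map is the identity at time zero), so $\sum_{|\beta|=s}\|\gamma^\beta w(0)\|_{L^2_z}^2 = \|w(0)\|_{\dot H^s_z}^2$. On the left-hand side, bounding $\sup_t \sum_{|\beta|=s}(\cdot) \leq \sum_{|\beta|=s}\sup_t(\cdot)$ and using $D_v\gamma^\beta = \gamma^\beta D_v$, the summed estimate controls $\sup_t \|w(t)\|_{\dot\Gamma^s(t)}^2 + \int_0^T \|D_v w(t)\|_{\dot\Gamma^s(t)}^2\dd t$. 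Adding to this the estimate of Lemma~\ref{lem:apriori} itself (the case $\beta = 0$), and using the subadditivity of the supremum together with the identities $\|f\|_{\Gamma^s(t)}^2 = \|f\|_{\dot\Gamma^s(t)}^2 + \|f\|_{L^2_z}^2$, $\|f\|_{H^s_z}^2 = \|f\|_{\dot H^s_z}^2 + \|f\|_{L^2_z}^2$ and $\|f\|_{L^2_t \Gamma^s_z}^2 = \int_0^T\|f(t)\|_{\dot\Gamma^s(t)}^2\dd t + \|f\|_{L^2_{t,z}}^2$, this collects the pieces into precisely
\be
\|w\|_{X^s}^2 \leq 2\|w(0,\cdot)\|_{H^s_z}^2 + 4\left(T\|h_1\|_{L^2_t\Gamma^s_z}^2 + \|h_2\|_{L^2_t\Gamma^s_z}^2\right) .
\ee

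I do not expect a serious obstacle here: once the commutation structure of Subsection~\ref{sec:transport} is in place, the argument is essentially bookkeeping, and the constants $2$ and $4$ are preserved verbatim through the summation. The one point deserving a line of care is the verification that $\gamma^\beta w$ genuinely satisfies the differentiated equation in the distributional sense and lies in $X^0$, so that Lemma~\ref{lem:apriori} is applicable; this is exactly where the hypotheses $w \in X^s$ and $h_i \in L^2_t H^s_z$ are used, and no approximation argument beyond the one already internal to Lemma~\ref{lem:apriori} is needed.
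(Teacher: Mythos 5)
Your proof is correct and follows essentially the same route as the paper: apply $\gamma^\beta$ to the equation, use the commutation relations from Remark~\ref{rmk:commute} to see that $\gamma^\beta w$ solves the differentiated equation with source $\gamma^\beta h_1 + \div_v \gamma^\beta h_2$, invoke Lemma~\ref{lem:apriori}, then sum over $|\beta| = s$ and add the zero-order estimate. The only places where you go a step further than the paper's write-up — observing $\gamma^\beta(0) = \partial^\beta$, noting $\sup_t \sum \leq \sum \sup_t$, and checking $\gamma^\beta w \in X^0$ via $D_v\gamma^\beta = \gamma^\beta D_v$ — are correct and merely make explicit what the paper leaves implicit.
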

\begin{proof}
Let $\beta \in (\NN \cup \{0\})^{2d}$ be a multi-index with $|\beta| = s$, and consider the function $\gamma^\beta w \in X^0$.
By the commutation relations noted in Remark~\ref{rmk:commute}, $\gamma^\beta w$ satisfies
\be
\left(\partial_t + v \cdot D_x - \frac{1}{2} \Delta_v\right) \gamma^\beta w = \gamma^\beta h_1 + \div_v \gamma^\beta h_2 .
\ee
Since $h_1, h_2 \in L^2_{t}H^s_{x,v}$ by assumption, we have $\gamma^\beta h_1, \gamma^\beta h_2 \in L^2_{t,z}$. 
Hence, by
Lemma~\ref{lem:apriori},
\begin{multline}
\sup_{t \in [0,T]}\| \gamma^\beta(t) w(t, \cdot ) \|_{L^2_z}^2 + \int_0^T \| \gamma^\beta(t) D_v w(t, \cdot) \|_{L^2_z}^2 \dd t \\
 \leq 2 \| \partial^{\beta} w(0, \cdot) \|_{L^2_z}^2 
 + 4 \int_0^T T \| \gamma^\beta(t) h_1 (t, \cdot) \|_{L^2_z}^2 + \| \gamma^\beta(t) h_2 (t, \cdot) \|_{L^2_z}^2 \dd t .
\end{multline}
Summing over all $\beta$ of order $|\beta| = s$, we obtain the estimate
\begin{multline}
\sup_{t \in [0,T]}\| w(t, \cdot ) \|_{\dot \Gamma^s_z(t)}^2 + \int_0^T \| D_v w(t, \cdot) \|_{\dot \Gamma^s_z(t)}^2 \dd t \\
\leq 2 \| w(0, \cdot) \|_{\dot H^s_z}^2 + 4 \int_0^T \left ( T \| h_1 (t, \cdot) \|_{\dot \Gamma^s_z(t)}^2 +  \| h_2 (t, \cdot) \|_{\dot \Gamma^s_z(t)}^2 \right ) \dd t .
\end{multline}
Combining this with the $L^2$ estimate from Lemma~\ref{lem:apriori} completes the proof.

\end{proof}

We collect the results of this subsection into the following proposition.

\begin{prop} \label{prop:Kol-WP}
Let $s \in \NN \cup \{0\}$, $w_0 \in H^s(\dom)$ and $h_1, h_2 \in L^2\left ((0,T) ; H^s(\dom) \right )$. Then there is a unique weak solution $w \in X^s$ of the problem
\be \label{eq:Kol-Hs}
\left(\partial_t + v \cdot D_x - \frac{1}{2} \Delta_v\right) w = h_1 + \div_v h_2, \qquad w \rvert_{t=0} = w_0 .
\ee
Furthermore,
$w$ satisfies the estimate
\be \label{est:Kol-Hs}
\| w \|_{X^s}^2
\leq  2 \| w_0 \|_{H^s_{z}}^2 + 4 \int_0^T \left[T\|  h_1(t, \cdot) \|_{\Gamma^s_z(t)}^2 +  \| h_2(t, \cdot) \|_{\Gamma^s_z(t)}^2\right] \dd t . 
\ee

\end{prop}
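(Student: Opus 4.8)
We would prove Proposition~\ref{prop:Kol-WP} in three steps, with the a priori estimates of Subsection~\ref{linearEquation} doing essentially all of the work.

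\emph{The estimate and uniqueness.} Estimate \eqref{est:Kol-Hs} is nothing but Corollary~\ref{cor:apriori-higher}: since $w(0,\cdot)=w_0$ and $\gamma(0)=D_z$ one has $\|w(0,\cdot)\|_{\Gamma^s(0)}=\|w_0\|_{H^s_z}$, while $\|h_i\|_{L^2_t\Gamma^s_z}^2=\int_0^T\|h_i(t,\cdot)\|_{\Gamma^s_z(t)}^2\,\dd t$ by definition, so the two bounds coincide. Uniqueness is just as quick: if $w_1,w_2\in X^s\subset X^0$ both solve \eqref{eq:Kol-Hs}, then $w:=w_1-w_2\in X^0$ solves \eqref{eq:Kol} with $h_1=h_2=0$ and $w(0)=0$, and Lemma~\ref{lem:apriori} forces $\|w\|_{X^0}=0$.

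\emph{Existence.} The plan is to combine solvability of the equation for smooth data with a density argument built on the linear a priori estimate. For data $(w_0,h_1,h_2)$ that are smooth and rapidly decaying in $z$, the Cauchy problem \eqref{eq:Kol-Hs} admits a classical solution $w\in X^s$ --- obtainable, e.g., from Kolmogorov's explicit fundamental solution for $\partial_t+v\cdot D_x-\frac12\Delta_v$, or from its hypoellipticity --- for which the energy computations underlying Lemma~\ref{lem:apriori} and Corollary~\ref{cor:apriori-higher} are rigorous, no contribution from infinity appearing thanks to the decay. Given then general data $w_0\in H^s_z$, $h_1,h_2\in L^2_tH^s_z$, we choose smooth rapidly decaying $w_0^n\to w_0$ in $H^s_z$ and $h_i^n\to h_i$ in $L^2_tH^s_z$, and denote by $w^n\in X^s$ the corresponding classical solutions. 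Applying Corollary~\ref{cor:apriori-higher} to the differences $w^n-w^m$ (which solve \eqref{eq:Kol} with data $w_0^n-w_0^m$ and sources $h_i^n-h_i^m$) gives
\be
\|w^n-w^m\|_{X^s}^2 \leq 2\|w_0^n-w_0^m\|_{H^s_z}^2 + 4\left(T\|h_1^n-h_1^m\|_{L^2_t\Gamma^s_z}^2 + \|h_2^n-h_2^m\|_{L^2_t\Gamma^s_z}^2\right),
\ee
whose right-hand side tends to $0$ as $n,m\to\infty$; hence $(w^n)$ is Cauchy in $X^s$, and we let $w\in X^s$ be its limit. Since $w^n\to w$ in $C^0_tH^s_z$, hence in $L^2_{t,z,\mathrm{loc}}$, we may pass to the limit in the linear equation in the sense of distributions --- in particular $v\cdot D_xw^n\to v\cdot D_xw$, since distributional differentiation and multiplication by the smooth function $v$ are continuous on $\mathcal D'$ --- and in the initial condition, so that $w$ is a weak solution of \eqref{eq:Kol-Hs} lying in $X^s$.

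\emph{Main difficulty.} The recurrent technical obstacle, here as throughout Section~\ref{linearSection}, is the unbounded factor $v$ in the transport term $v\cdot D_xw$: one cannot treat $\partial_t+v\cdot D_x-\frac12\Delta_v$ as a perturbation of $\partial_t-\frac12\Delta_v$, and the whole construction must be phrased through the vector fields $\gamma^\beta$ (and the flow $\phi$), which commute with the transport operator, rather than through the raw derivatives $\partial^\beta_z$. For smooth decaying data this requires some care in verifying that the solution truly belongs to $X^s$ and that the energy identities carry no boundary term at infinity; for general $X^s$-solutions the rigorous justification of those same identities is exactly the approximation argument of Appendix~\ref{appendixSection}. (As an alternative to the smooth-data input, one could invoke directly the known well-posedness theory for linear kinetic Fokker--Planck/Kolmogorov equations, leaving the density step unchanged.)
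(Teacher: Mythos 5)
Your proposal is correct and follows essentially the same route as the paper: construct solutions for smooth data via the Kolmogorov fundamental solution, approximate general data, use Corollary~\ref{cor:apriori-higher} on differences to obtain a Cauchy sequence in $X^s$, and pass to the limit in the weak formulation. The only (welcome) addition is that you spell out the uniqueness step via Lemma~\ref{lem:apriori}, which the paper's proof leaves implicit.
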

\begin{proof}
The proof of existence of solutions for this linear problem 
is classical. We use the representation formula in terms of the fundamental solution computed by Kolmogorov \cite{Kolmogorov} for equation \eqref{eq:Kol-Hs}; see \cite{Hormander} for the case of general drift-diffusion operators:
\be
S(t, z ' ) : = \left ( \frac{3}{\pi^2} \right )^{d/2} t^{-2d} e^{- \frac{1}{2t} \left ( 4 |v'|^2 + \frac{12}{t} v' \cdot x' + \frac{3}{t^2} |x'|^2 \right ) } .
\ee

If $w_0, h_1, h_2$ are smooth ($C^\infty$) functions, then the following formula defines a smooth solution of \eqref{eq:Kol}:
\be
w(t,z) = S(t, \cdot) \ast_z w_0 ( x - tv , v) 
+ \int_0^t S(\tau , \cdot) \ast_z [ h_1(t-\tau , \cdot) + \div_v h_2(t-\tau, \cdot)] (x - \tau v  , v ) \dd \tau  .
\ee
For general $w_0 \in H^s(\dom)$ and $h_1, h_2 \in L^2\left ([0,T] ; H^s(\dom) \right )$, take smooth approximating sequences $w_0^{(n)}, h_1^{(n)}, h_2^{(n)}$ such that
\be \label{Kol-data-approx}
\lim_{n \to +\infty} \left ( \| w_0^{(n)} - w_0 \|_{H^s} + \| h_1^{(n)} - h_1 \|_{L^2_tH^s_z} +  \| h_2^{(n)} - h_2 \|_{L^2_tH^s_z} \right ) = 0 .
\ee
Let $w^{(n)}$ be defined by the formula
\be
w^{(n)}(t,z) = S(t, \cdot) \ast_z w_0^{(n)} ( x - tv , v) 
+ \int_0^t S(\tau , \cdot) \ast_z [ h_1^{(n)}(t- \tau , \cdot) + \div_v h_2^{(n)}(t- \tau , \cdot)] (x - \tau v  , v ) \dd \tau  .
\ee
Then $w^{(n)}$ is an $X^s$ solution of
\be \label{eq:Kol-n}
\left(\partial_t + v \cdot D_x - \frac12 \Delta_v\right) w^{(n)} = h_1^{(n)} + \div_v h_2^{(n)}, \qquad w^{(n)} \rvert_{t=0} = w_0^{(n)} .
\ee
We will show that $w^{(n)}$ is a Cauchy sequence in $X^s$, and therefore converges.

Therefore let $n, j \in \NN$. Since the equation \eqref{eq:Kol} is linear in $w$, the difference $w^{(n)} - w^{(j)}$ satisfies
\be \label{eq:Kol-n-j}
\left(\partial_t + v \cdot D_x - \frac12 \Delta_v\right) (w^{(n)} - w^{(j)} )= ( h_1^{(n)} - h_1^{(j)} ) + \div_v ( h_2^{(n)} - h_2^{(j)} ), 
\ee
{with initial condition $( w^{(n)} - w^{(j)} ) \, \rvert_{t=0} = w_0^{(n)} - w_0^{(j)}.$}
By the estimates of Corollary~\ref{cor:apriori-higher},
\be
\| w^{(n)} - w^{(j)} \|_{X^s}^2 \leq 2 \| w_0^{(n)} - w_0^{(j)} \|_{H^s_z}^2 + 4 \left ( T \| h_1^{(n)} - h_1^{(j)} \|_{L^2_t \Gamma^s_z}^2 + \| h_2^{(n)} - h_2^{(j)} \|_{L^2_t \Gamma^s_z}^2 \right ).
\ee
By \eqref{Kol-data-approx}, the right hand side tends to zero as $n, j \to +\infty$. Hence the sequence $(w^{(n)})_{n=1}^{\infty}$ is indeed Cauchy in $X^s$.
In particular $(w^{(n)})_{n=1}^{\infty}$ is Cauchy in $C^0_tH^s_z$, and $(D_v w^{(n)})_{n=1}^{\infty}$ is Cauchy in $L^2_tH^s_z$,
and there exists a limit $w \in C^0_tH^s_z$ with distributional derivative $D_v w \in L^2_tH^s_z$, such that
\be \label{Kol-sol-approx}
\lim_{n \to +\infty} \left ( \| w^{(n)} - w \|_{L^\infty_t H^s_z}^2 + \| D_v (w^{(n)} - w) \|_{L^2_t H^s_z}^2 \right ) = 0 .
\ee

Finally, using \eqref{Kol-data-approx} and \eqref{Kol-sol-approx} we may pass to the limit in the weak form of the equation \eqref{eq:Kol-n} to deduce that $w$ is a $X^s$ distributional solution of \eqref{eq:Kol-Hs}.
The estimate \eqref{est:Kol-Hs} is then a direct consequence of Corollary \ref{cor:apriori-higher}.

\end{proof}

We conclude this subsection with some comments on the backward problem
\be \label{eq:Kol-bwd}
- \left(\partial_t + v \cdot D_x + \frac{1}{2} \Delta_v\right) w = h_1 + \div_v h_2, \qquad w \rvert_{t=T} = w_T .
\ee
This terminal value problem may be transformed into an initial value problem by a time reversal, i.e. by considering $\tilde w = w \circ (T - \iota_{[0,T]}, \iota_{\dom})$, which satisfies
\be \label{eq:Kol-flip}
\left(\partial_t - v \cdot D_x -  \frac{1}{2} \Delta_v\right) \tilde w = \tilde h_1 + \div_v \tilde h_2, \qquad \tilde w \rvert_{t=0} = w_T .
\ee
This is not of the form \eqref{eq:Kol} due to the appearance of the backward transport operator $\partial_t - v \cdot D_x$. In particular the vector fields $\gamma$ do not commute with $\partial_t - v \cdot D_x$, and so we cannot apply Proposition~\ref{prop:Kol-WP} immediately as we would in the case of the heat operator $\partial_t - \Delta_z$. However we may obtain estimates on $w$ in terms of the norm $\| \cdot \|_{X^s}$ by the following procedure. 

\begin{cor} \label{cor:Kol-bwd}
Let $w \in X^s$ be a distributional solution of equation \eqref{eq:Kol-bwd}, where $h_1, h_2 \in L^2_{t}H^s_{z}$. Then
\be
\| w \|_{X^s}^2 \leq  2 \| w(T, \cdot) \|_{\Gamma^s_z(T)}^2 
 +   4 \int_0^T \left[T \|  h_1 (t) \|_{\Gamma^s_z(t)}^2+  \| h_2 (t) \|_{\Gamma^s_z(t)}^2\right] \dd t .
 \ee

\end{cor}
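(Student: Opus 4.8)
The plan is to apply the vector fields $\gamma^\beta$ \emph{before} performing any time reversal, since that is the order in which the commutation structure is favourable. Although $\gamma_j(t)$ does not commute with $\partial_t - v\cdot D_x$, it does commute with the backward operator $-\partial_t - v\cdot D_x - \frac{1}{2}\Delta_v$ exactly as it stands in \eqref{eq:Kol-bwd}: by Remark~\ref{rmk:commute} each $\gamma_j$ commutes with $\partial_t + v\cdot D_x$ and with $\Delta_v$ (hence also with $\div_v$ acting on a vector field, since $[\gamma_j,\partial_{v_k}]=0$), and therefore with the negatives of these operators. Thus for $|\beta|=s$ the function $\gamma^\beta w$ solves again \eqref{eq:Kol-bwd}, now with source $\gamma^\beta h_1 + \div_v(\gamma^\beta h_2)$ and terminal datum $\gamma^\beta(T)\,w(T,\cdot)$; moreover $D_v(\gamma^\beta w)=\gamma^\beta(D_v w)$. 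The $t$-dependence of $\gamma^\beta(t)$ produces no commutator remainder here, precisely because $[\gamma^\beta,\partial_t+v\cdot D_x]=0$.

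I would then proceed in two steps, mirroring the passage from Lemma~\ref{lem:apriori} to Corollary~\ref{cor:apriori-higher}. First, a backward $L^2$ estimate: for $w\in X^0$ solving \eqref{eq:Kol-bwd}, set $\tilde w(t,z):=w(T-t,z)$ and $\tilde h_i(t,z):=h_i(T-t,z)$, so that $\tilde w\in X^0$ solves $\left(\partial_t - v\cdot D_x - \frac{1}{2}\Delta_v\right)\tilde w = \tilde h_1 + \div_v\tilde h_2$ with $\tilde w\rvert_{t=0}=w(T,\cdot)$. The proof of Lemma~\ref{lem:apriori} goes through unchanged for this equation, because the transport operator enters the energy identity only through $\int_\dom (v\cdot D_x\tilde w)\,\tilde w\,\dd z = 0$, whose vanishing (made rigorous by the approximation argument of Appendix~\ref{appendixSection}) does not see the sign of $v\cdot D_x$. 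Undoing the time reversal gives
\be
\sup_{t\in[0,T]}\|w(t)\|_{L^2_z}^2 + \int_0^T\|D_v w(t)\|_{L^2_z}^2\,\dd t \leq 2\|w(T)\|_{L^2_z}^2 + 4\int_0^T\left[T\|h_1(t)\|_{L^2_z}^2 + \|h_2(t)\|_{L^2_z}^2\right]\dd t .
\ee
Second, applying this estimate to each $\gamma^\beta w$ with $|\beta|=s$, summing over such $\beta$, and adding the instance $\beta=0$, the left-hand side bounds from below $\sup_t\|w(t)\|_{\Gamma^s_z(t)}^2 + \int_0^T\|D_v w(t)\|_{\Gamma^s_z(t)}^2\,\dd t = \|w\|_{X^s}^2$ (using $\sup_t\sum_\beta\leq\sum_\beta\sup_t$ for the supremum term, while the integral term is an exact equality), and the right-hand side equals $2\|w(T)\|_{\Gamma^s_z(T)}^2 + 4\int_0^T\bigl[T\|h_1(t)\|_{\Gamma^s_z(t)}^2 + \|h_2(t)\|_{\Gamma^s_z(t)}^2\bigr]\dd t$. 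This is exactly the claimed inequality.

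I do not anticipate a genuine obstacle: the whole content is the correct ordering of operations --- differentiate with the $\gamma$'s first, reverse time only at the scalar $L^2$ level --- which is precisely the subtlety flagged in the paragraph preceding the statement (one cannot apply Proposition~\ref{prop:Kol-WP} directly to the time-reversed equation, because $\gamma$ fails to commute with $\partial_t - v\cdot D_x$). The remaining points --- checking $\gamma^\beta w\in X^0$, that $D_v$ commutes with $\gamma^\beta$, and the elementary bookkeeping of summing over $\beta$ and combining with the $L^2$ bound --- are routine.
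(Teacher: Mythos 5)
Your proposal is correct and follows essentially the same route as the paper: establish the $X^0$ estimate for the backward problem via time reversal and a sign-insensitive repetition of Lemma~\ref{lem:apriori}, then note that $\gamma^\beta$ commutes with the backward operator so that $\gamma^\beta w$ again solves \eqref{eq:Kol-bwd}, apply the $X^0$ bound to $\gamma^\beta w$, and sum over $|\beta| = s$. The only cosmetic difference is emphasis --- you frame the key point as ``commute with $\gamma^\beta$ before reversing time,'' whereas the paper first records the $X^0$ estimate and then feeds $\gamma^\beta w$ into it --- but the mathematical content, including the observation that $\sup_t\sum_\beta \leq \sum_\beta\sup_t$ for the supremum term, is identical.
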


\begin{rmk}
We emphasise that the norm appearing on the terminal datum is the $\Gamma^s_z(T)$ norm adapted to the terminal time $T$. 
This is particularly significant since we intend to use these estimates in the context of a coupled forward-backward problem.
The information we require on the terminal datum for the backward problem is of the same type as is output by the forward problem for its solution at time $T$. 
Thus, by working in the adapted norms throughout, we are able to avoid losing any additional factors of $(1+T)$ through the terminal coupling, that might otherwise arise from the conversion between $\Gamma^s$ and standard $H^s$ norms.
\end{rmk}

\begin{proof}[Proof of Corollary \ref{cor:Kol-bwd}]

The same argument as in Lemma~\ref{lem:apriori}, up to a change of sign, shows that the zero-order estimate holds also for equation \eqref{eq:Kol-flip}, hence
\be
\sup_{t \in [0,T]}\| w(T-t) \|_{L^2_z}^2 + \int_0^T \| D_v w(T-t) \|_{L^2_{z}}^2 \dd t  \leq  2 \| w_T \|_{L^2_z}^2 
 +   4 \int_0^T T \|  h_1 (T-t) \|_{L^2_{z}}^2+  \| h_2 (T-t) \|_{L^2_{z}}^2 \dd t .
\ee

The transformation $t \mapsto T-t$ results in the $X^0$ estimate for $w$:
\be \label{bwdX0}
\sup_{t \in [0,T]}\| w(t) \|_{L^2_z}^2 + \int_0^T \| D_v w(t) \|_{L^2_{z}}^2 \dd t  \leq  2 \| w_T \|_{L^2_z}^2 
 +   4 \int_0^T T \|  h_1 (t) \|_{L^2_{z}}^2+  \| h_2 (t) \|_{L^2_{z}}^2 \dd t .
\ee

Then, for the order $s$ estimate, we observe that $\gamma^\beta w$ solves the backward problem \eqref{eq:Kol-bwd} for any multi-index $\beta$ of order $|\beta| = s$. Hence, by the $X^0$ estimate \eqref{bwdX0} (which holds for any solution of the backward problem \eqref{eq:Kol-bwd}),
\be \label{bwdGs}
\sup_{t \in [0,T]}\| \gamma^\beta w(t) \|_{L^2_z}^2 + \int_0^T \| D_v \gamma^\beta w(t) \|_{L^2_{z}}^2 \dd t  \leq  2 \| \gamma^\beta(T) w_T \|_{L^2_z}^2 
 +   4 \int_0^T T \|  \gamma^\beta h_1 (t) \|_{L^2_{z}}^2+  \| \gamma^\beta h_2 (t) \|_{L^2_{z}}^2 \dd t ,
\ee
and we conclude by summing over $\beta$ that
\be \label{bwdXs}
\sup_{t \in [0,T]}\| w(t) \|_{\Gamma^s_z(t)}^2 + \int_0^T \| D_v w(t) \|_{\Gamma^s_z(t)}^2 \dd t  \leq  2 \| w_T \|_{\Gamma^s_z(T)}^2 
 +   4 \int_0^T T \|  h_1 (t) \|_{\Gamma^s_z(t)}^2+  \| h_2 (t) \|_{\Gamma^s_z(t)}^2 \dd t .
\ee

\end{proof}

\subsection{Estimates for the MFG equations} \label{sec:MFGEstimates}

We apply the estimates of Proposition~\ref{prop:Kol-WP} in the setting of MFGs systems. We consider the following slight generalisation of \eqref{eq:MFG}: 
\be \label{eq:MFG-gen}
\left\{
\begin{array}{l}
- \partial_t u - v \cdot D_x u - \frac{1}{2} \Delta_v u - \e \mc{H} [m, D_v u ] = 0,\\ [5pt] 
\partial_t m  + v \cdot D_x m - \frac{1}{2} \Delta_v m + \e \div_v \left ( \mc{J} [m, D_v u ] \right ) = 0,\\ [5pt] 
u \vert_{t = T} = \delta \, \mc{G} [ m(T, \cdot) ],\\ [5pt] 
m(0,z) = m^0, 
\end{array}
\right.
\ee
where now $\mc{H}, \mc{J}, \mc{G}$ denote continuous functions between the following spaces:
\begin{align}
&\mc{H} : \left ( L^2_t H^s_z \cap L^\infty_t H^{s-1}_z \right )^{d+1} \to L^2_t H^s_z \\
&\mc{J} : \left ( L^2_t H^s_z \cap L^\infty_t H^{s-1}_z \right )^{d+1} \to \left ( L^2_t H^s_z \right )^d \\
&\mc{G} : H^s_z \to H^s_z ,
\end{align}
that are bounded and Lipschitz on bounded subsets of their respective domains.
In quantitative terms, we assume the following.

\begin{hyp} \label{hyp:data-gen}
\begin{enumerate}[(i)]
\item \label{hyp:H-gen} There exists a function $\mc{F} : [0, + \infty)^2 \to [0, + \infty)$ which is non-decreasing with respect to both variables and such that, for functions $m \in L^2_t H^s_z \cap L^\infty_t H^{s-1}_z $ and $p \in \left ( L^2_t H^s_z \cap L^\infty_t H^{s-1}_z \right )^{d}$, we have
\be
\| \mc{H}[m,p] \|_{L^2_t \Gamma^s_z}^2 + \| \mc{J}[m,p] \|_{L^2_t \Gamma^s_z}^2 \leq \mc{F} \left (T, \| (m,p) \|_{L^2_t \Gamma^s_z \cap L^\infty_t \Gamma^{s-1}_z }^2 \right ) ;
\ee
and for $m_i \in L^2_t H^s_z \cap L^\infty_t H^{s-1}_z $ and $p_i \in \left ( L^2_t H^s_z \cap L^\infty_t H^{s-1}_z \right )^{d}$ ($i=1,2$),
\begin{multline}
\| \mc{H}[m_1,p_1] - \mc{H}[m_2,p_2]  \|_{L^2_t \Gamma^s_z}^2 + \| \mc{J}[m_1,p_1] - \mc{J}[m_2,p_2]  \|_{L^2_t \Gamma^s_z}^2 \\
 \leq \| (m_1 - m_2, p_1 - p_2 ) \|_{L^2_t \Gamma^s_z \cap L^\infty_t \Gamma^{s-1}_z}^2 \, \mc{F} \left (T, \max_i \| (m_i,p_i) \|_{L^2_t \Gamma^s_z \cap L^\infty_t \Gamma^{s-1}_z }^2 \right )  .
\end{multline}
\item \label{hyp:G-gen} There exists a function $\mc{K} : [0, + \infty)^2 \to [0, + \infty)$ which is non-decreasing with respect to both variables and such that, for $\mu \in H^s_z$,
\be \label{hyp:g-bdd-functional}
\| \mc{G} [\mu]  \|_{\Gamma^s_z(T)}^2 \leq \mc{K} \left ( T,  \| \mu \|_{\Gamma^s_z(T)}^2 \right ) ;
\ee
and for $\mu_i \in H^s_z$ ($i=1,2$),
\be \label{hyp:g-stability-functional}
\| \mc{G} [\mu_1] - \mc{G} [ \mu_2] \|_{\Gamma^s_z(T)}^2 \leq \| \mu_1 - \mu_2 \|_{\Gamma^s_z (T)}^2 \: \mc{K} \left ( T, \max_i \| \mu_i\|_{\Gamma^s_z(T)}^2 \right ).
\ee
\end{enumerate}

\end{hyp}

By Proposition~\ref{prop:HRegularity}, Assumption~\ref{hyp:data-gen} is satisfied by Hamiltonians and terminal couplings depending locally on $m$, for $H$ and $g$ satisfying Assumptions~\ref{hyp:main} and \ref{hyp:terminal}.
However, it can also apply to cases with non-local dependence on $m$, or mixed local and non-local dependence.

In anticipation of the fixed point argument we will use to construct solutions, we consider $u \in X^s$ satisfying 
\be \label{eq:value-gen}
\begin{cases}
- \partial_t u - v \cdot D_x u  - \frac{1}{2} \Delta_v u  =  \e \mc{H}[\tilde m, D_v \tilde u ]  \\
u \rvert_{t=T} = \delta \mc{G} [  \tilde m (T, \cdot) ]
\end{cases}
\ee
and $m \in X^s$ satisfying
\be \label{eq:cty-gen}
\begin{cases}
\partial_t m  + v \cdot D_x m  - \frac{1}{2} \Delta_v m  = - \e \div_v \left ( \mc{J}[\tilde m, D_v \tilde u ]  \right ) \\
m \rvert_{t=0} = m^0,
\end{cases}
\ee
where $m^0 \in H^s_z$, $\tilde m \in X^s$ and $\tilde u \in X^s$ are assumed to be given.
Using the estimates from Proposition~\ref{prop:Kol-WP} and Corollary~\ref{cor:Kol-bwd},
we obtain the following estimates for $u$ and $m$.

\begin{lemma} \label{lem:XsMFG}
Let $s \geq s_\ast +1$ and let $m^0 \in H^s_z$, $\tilde m \in X^s$ and $\tilde u \in X^s$.
Let $\mc{H}, \mc{J}$ and $\mc{G}$ satisfy Assumption~\ref{hyp:data-gen}.

Then there exists $u \in X^s$ satisfing \eqref{eq:value-gen} and $m \in X^s$ satisfying \eqref{eq:cty-gen} in the sense of distributions. Moreover the following estimate holds:
\be
\| m \|_{X^s}^2 + \| u \|_{X^s}^2 \leq 2 \| m^0 \|_{H^s_z}^2 + 2 \delta^2 \mc{K} (T, \| \tilde m \|_{X^s}^2) + 4 \e^2 (1+ T) \mc{F} \left ( T, (1+T) \| (\tilde m, \tilde u )\|_{X^s}^2 \right ) .
\ee

\end{lemma}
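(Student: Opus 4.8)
The plan is to treat the two decoupled linear problems \eqref{eq:value-gen} and \eqref{eq:cty-gen} separately, using the existence theory and estimates already established in Subsection~\ref{linearEquation}, and then combine the resulting bounds. First, since $\tilde m, \tilde u \in X^s$, we have $D_v \tilde u \in L^2_t H^s_z$, and we must check that the source terms $\mc{H}[\tilde m, D_v \tilde u]$ and $\mc{J}[\tilde m, D_v \tilde u]$ lie in $L^2_t H^s_z$; this follows from Assumption~\ref{hyp:data-gen}(\ref{hyp:H-gen}) applied with $(m,p) = (\tilde m, D_v \tilde u)$, once we note that $\| (\tilde m, D_v \tilde u) \|_{L^2_t \Gamma^s_z \cap L^\infty_t \Gamma^{s-1}_z}^2 \lesssim \| \tilde m \|_{X^s}^2 + \| \tilde u \|_{X^s}^2$ (the $L^\infty_t H^{s-1}_z$ control on $D_v \tilde u$ is weaker than the $L^2_t H^s_z$ control on $D_v \tilde u$ together with the $L^\infty_t H^s_z$ control on $\tilde u$, and $\Gamma$-norms are equivalent to standard Sobolev norms by Lemma~\ref{lem:VF-conversion}). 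Similarly $\mc{G}[\tilde m(T,\cdot)] \in H^s_z$ by Assumption~\ref{hyp:data-gen}(\ref{hyp:G-gen}) since $\tilde m(T,\cdot) \in H^s_z$.

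Next I would invoke the linear well-posedness results. For the forward equation \eqref{eq:cty-gen}, which is of the form \eqref{eq:Kol-Hs} with $w_0 = m^0$, $h_1 = 0$, and $h_2 = -\e\,\mc{J}[\tilde m, D_v \tilde u]$, Proposition~\ref{prop:Kol-WP} gives a unique solution $m \in X^s$ with
\be
\| m \|_{X^s}^2 \leq 2 \| m^0 \|_{H^s_z}^2 + 4 \e^2 \| \mc{J}[\tilde m, D_v \tilde u] \|_{L^2_t \Gamma^s_z}^2 .
\ee
For the backward equation \eqref{eq:value-gen}, which is of the form \eqref{eq:Kol-bwd} with terminal datum $w_T = \delta\,\mc{G}[\tilde m(T,\cdot)]$, source $h_1 = -\e\,\mc{H}[\tilde m, D_v \tilde u]$, and $h_2 = 0$, Corollary~\ref{cor:Kol-bwd} gives a solution $u \in X^s$ with
\be
\| u \|_{X^s}^2 \leq 2 \delta^2 \| \mc{G}[\tilde m(T,\cdot)] \|_{\Gamma^s_z(T)}^2 + 4 \e^2 T \| \mc{H}[\tilde m, D_v \tilde u] \|_{L^2_t \Gamma^s_z}^2 .
\ee
(I note that using the adapted $\Gamma^s_z(T)$ norm on the terminal datum is exactly what avoids spurious factors of $(1+T)$ here, as remarked after Corollary~\ref{cor:Kol-bwd}; this matters for the eventual fixed point argument but is not strictly needed for this lemma.)

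Finally I would add the two estimates and apply the quantitative bounds from Assumption~\ref{hyp:data-gen}. The terminal term is controlled by \eqref{hyp:g-bdd-functional}: $\| \mc{G}[\tilde m(T,\cdot)] \|_{\Gamma^s_z(T)}^2 \leq \mc{K}(T, \| \tilde m(T,\cdot) \|_{\Gamma^s_z(T)}^2) \leq \mc{K}(T, \| \tilde m \|_{X^s}^2)$, using monotonicity of $\mc{K}$ and $\| \tilde m(T,\cdot) \|_{\Gamma^s_z(T)}^2 \leq \| \tilde m \|_{X^s}^2$. The source terms combine as $T \| \mc{H} \|_{L^2_t \Gamma^s_z}^2 + \| \mc{J} \|_{L^2_t \Gamma^s_z}^2 \leq (1+T)\big(\| \mc{H} \|_{L^2_t \Gamma^s_z}^2 + \| \mc{J} \|_{L^2_t \Gamma^s_z}^2\big) \leq (1+T)\,\mc{F}\big(T, \| (\tilde m, D_v \tilde u) \|_{L^2_t \Gamma^s_z \cap L^\infty_t \Gamma^{s-1}_z}^2\big)$, and then bounding the argument of $\mc{F}$ by $(1+T)\| (\tilde m, \tilde u) \|_{X^s}^2$ (the factor $(1+T)$ absorbing the norm-equivalence constants from Lemma~\ref{lem:VF-conversion} when passing between $D_v \tilde u$ controlled in $X^s$ and the mixed $\Gamma$-norm), using monotonicity of $\mc{F}$ to conclude. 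Summing yields the claimed estimate. The main subtlety—rather than an obstacle—is the bookkeeping to confirm that the mixed space norm of $(\tilde m, D_v \tilde u)$ appearing in Assumption~\ref{hyp:data-gen} is genuinely dominated (up to the stated $(1+T)$ factors) by $\| \tilde m \|_{X^s}^2 + \| \tilde u \|_{X^s}^2$; this is where one must be careful that $D_v \tilde u$ only carries an $L^2_t H^s_z$ bound, not an $L^\infty_t H^s_z$ one, but since Assumption~\ref{hyp:data-gen} only demands $L^\infty_t H^{s-1}_z$ on the momentum slot, and $\| D_v \tilde u \|_{L^\infty_t H^{s-1}_z} \leq \| \tilde u \|_{L^\infty_t H^s_z} \leq \| \tilde u \|_{X^s}$, the matching is clean.
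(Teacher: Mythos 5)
Your proof is correct and follows essentially the same route as the paper: apply Proposition~\ref{prop:Kol-WP} to the forward equation with $h_1 = 0$, $h_2 = -\e\,\mc{J}[\tilde m, D_v\tilde u]$, apply Corollary~\ref{cor:Kol-bwd} to the backward equation with $h_1 = -\e\,\mc{H}[\tilde m, D_v\tilde u]$, $h_2 = 0$, and then close using Assumption~\ref{hyp:data-gen} and the embeddings of $X^s$ into the relevant mixed spaces. One small misattribution: the factor $(1+T)$ inside the argument of $\mc{F}$ has nothing to do with the norm-equivalence constants of Lemma~\ref{lem:VF-conversion}, which are never invoked here since the computation stays in the $\Gamma^s(t)$ norms throughout. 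It arises simply because the $X^s$ norm of $\tilde m$ controls $\|\tilde m\|_{L^\infty_t \Gamma^s_z}$ directly but controls $\|\tilde m\|_{L^2_t\Gamma^s_z}$ only via H\"older in time, costing a factor $T$; hence $\|\tilde m\|_{L^2_t\Gamma^s_z\cap L^\infty_t\Gamma^{s-1}_z}^2 \le (1+T)\|\tilde m\|_{X^s}^2$, whereas on the momentum slot $\|D_v\tilde u\|_{L^2_t\Gamma^s_z\cap L^\infty_t\Gamma^{s-1}_z}^2 \le \|\tilde u\|_{X^s}^2$ with no extra factor, exactly as you observe.
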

\begin{proof}
Since $m \in X^s$ satisfies \eqref{eq:cty-gen} in the sense of distributions,
by Proposition~\ref{prop:Kol-WP} with $h_1 = 0$ and $h_2 = - \e \mc{J}[\tilde m, D_v \tilde u ]$,
\be
\| m \|_{X^s}^2 \leq 2 \| m^0 \|_{H^s_z}^2 + 4 \e^2 \int_0^T \left \| \mc{J} [ \tilde m, D_v \tilde u ] \right \|_{\Gamma^s(t)}^2 \dd t .
\ee
Since $u \in X^s$ satisfies \eqref{eq:value-gen} in the sense of distributions, by Corollary~\ref{cor:Kol-bwd} with the choice of $h_1 = - \e \mc{H}[\tilde m, D_v \tilde u ] $ and $h_2 = 0$,
\be
\| u \|_{X^s}^2 \leq 2 \delta^2 \| \mc{G} [\tilde m(T, \cdot)] \|_{\Gamma^s_z(T)}^2 + 4 \e^2 T \int_0^T \left \| \mc{H} [ \tilde m, D_v \tilde u ] \right \|_{\Gamma^s(t)}^2 \dd t .
\ee
By Assumption~\ref{hyp:data-gen}(\ref{hyp:H-gen}),
\be
\int_0^T T \left \| \mc{H} [ \tilde m, D_v \tilde u ] \right \|_{\Gamma^s(t)}^2 + \left \| \mc{J} [ \tilde m, D_v \tilde u ] \right \|_{\Gamma^s(t)}^2 \dd t \leq (1+T)  \mc{F} \left (T, \| (\tilde m,D_v \tilde u) \|_{L^2_t \Gamma^s_z \cap L^\infty_t \Gamma^{s-1}_z }^2 \right ) .
\ee
By Assumption~\ref{hyp:data-gen}(\ref{hyp:G-gen}),
\be
\| \mc{G} [\tilde m(T, \cdot)]  \|_{\Gamma^s_z(T)}^2 \leq \mc{K} \left ( T,  \| \tilde m(T, \cdot) \|_{\Gamma^s_z(T)}^2 \right ) .
\ee
Finally, we note that
\be
\| D_v \tilde u \|_{L^2_t \Gamma^s_z \cap L^\infty_t \Gamma^{s-1}_z }^2 \leq \| \tilde u \|_{X^s}^2
\ee
and
\be
\| \tilde m \|_{L^2_t \Gamma^s_z \cap L^\infty_t \Gamma^{s-1}_z }^2 \leq (1+T) \| \tilde m \|_{X^s}^2 , \qquad  \| \tilde m(T, \cdot) \|_{\Gamma^s_z(T)}^2 \leq \| \tilde m \|_{X^s}^2 .
\ee
Substituting into the above estimates and summing completes the proof.
\end{proof}

We next consider differences between solutions with different forcing terms. For $i=1,2$, let $(\tilde m_i, \tilde u_i) \in X^s \times X^s$ and consider $( m_i, u_i) \in X^s \times X^s$ the distributional solutions of 
\be \label{eq:value-gen-2}
\begin{cases}
- \partial_t u_i - v \cdot D_x u_i  - \frac{1}{2} \Delta_v u_i  = \e \mc{H}[\tilde m_i , D_v \tilde u_i ] \\
u_i\rvert_{t=T} = \delta \mc{G} [  \tilde m_i (T, \cdot) ]
\end{cases}
\ee
and
\be \label{eq:cty-gen-2}
\begin{cases}
\partial_t m_i  + v \cdot D_x m _i - \frac{1}{2} \Delta_v m_i  = - \e \div_v \left ( \mc{J}[\tilde m_i, D_v \tilde u_i ]  \right ) \\
m_i \rvert_{t=0} = m^0 .
\end{cases}
\ee

\begin{lemma} \label{lem:XsMFG-difference}
Let $( m_i, u_i) , (\tilde m_i, \tilde u_i) \in X^s \times X^s$ be as above. Then
\begin{multline}
\| (m_1 - m_2,  u_1 - u_2 ) \|_{X^s}^2 \leq \| (\tilde m_1 - \tilde m_2,  \tilde u_1 - \tilde u_2 ) \|_{X^s}^2 \\
\times \left [2 \delta^2\mc{K} \left ( T, \max_i \|  \tilde m_i  \|_{X^s}^2 \right ) + 4 \e^2 (1+T)^2 \, \mc{F} \left (T, (1+T) \max_i \| (\tilde m_i, \tilde u_i ) \|_{X^s}^2 \right )  \right ] .
\end{multline}

\end{lemma}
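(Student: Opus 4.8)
The plan is to reprise the argument of Lemma~\ref{lem:XsMFG}, this time exploiting the linearity of the two Kolmogorov operators in \eqref{eq:value-gen-2}--\eqref{eq:cty-gen-2}. The existence of the pairs $(m_i, u_i) \in X^s \times X^s$ is already furnished by Lemma~\ref{lem:XsMFG} (applied with $(\tilde m, \tilde u) = (\tilde m_i, \tilde u_i)$), so only the difference estimate is at issue. Since the same $m^0$ is prescribed for both indices, subtracting the two instances of \eqref{eq:cty-gen-2} shows that $m_1 - m_2 \in X^s$ solves the forward Kolmogorov equation in the sense of distributions with \emph{zero} initial datum and source $-\e\,\div_v \bigl( \mc{J}[\tilde m_1, D_v \tilde u_1] - \mc{J}[\tilde m_2, D_v \tilde u_2] \bigr)$; subtracting the two instances of \eqref{eq:value-gen-2} shows that $u_1 - u_2 \in X^s$ solves the backward Kolmogorov equation with terminal datum $\delta \bigl( \mc{G}[\tilde m_1(T, \cdot)] - \mc{G}[\tilde m_2(T, \cdot)] \bigr)$ and source $\e \bigl( \mc{H}[\tilde m_1, D_v \tilde u_1] - \mc{H}[\tilde m_2, D_v \tilde u_2] \bigr)$.

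Next I would apply Proposition~\ref{prop:Kol-WP} to $m_1 - m_2$ (with $h_1 = 0$ and vanishing initial datum) and Corollary~\ref{cor:Kol-bwd} to $u_1 - u_2$ (with $h_2 = 0$), and add the two resulting inequalities; using the elementary bound $Ta + b \leq (1+T)(a+b)$ to combine the two source contributions gives
\begin{multline*}
\| m_1 - m_2 \|_{X^s}^2 + \| u_1 - u_2 \|_{X^s}^2 \leq 2 \delta^2 \| \mc{G}[\tilde m_1(T, \cdot)] - \mc{G}[\tilde m_2(T, \cdot)] \|_{\Gamma^s_z(T)}^2 \\ + 4 \e^2 (1+T) \left( \| \mc{H}[\tilde m_1, D_v \tilde u_1] - \mc{H}[\tilde m_2, D_v \tilde u_2] \|_{L^2_t \Gamma^s_z}^2 + \| \mc{J}[\tilde m_1, D_v \tilde u_1] - \mc{J}[\tilde m_2, D_v \tilde u_2] \|_{L^2_t \Gamma^s_z}^2 \right).
\end{multline*}

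It then remains to feed in the stability bounds of Assumption~\ref{hyp:data-gen}. Assumption~\ref{hyp:data-gen}(\ref{hyp:H-gen}) controls the $\mc{H}$- and $\mc{J}$-difference on the right by $\| (\tilde m_1 - \tilde m_2, D_v \tilde u_1 - D_v \tilde u_2) \|_{L^2_t \Gamma^s_z \cap L^\infty_t \Gamma^{s-1}_z}^2 \, \mc{F}\bigl(T, \max_i \| (\tilde m_i, D_v \tilde u_i) \|_{L^2_t \Gamma^s_z \cap L^\infty_t \Gamma^{s-1}_z}^2\bigr)$, and Assumption~\ref{hyp:data-gen}(\ref{hyp:G-gen}) controls the $\mc{G}$-difference by $\| \tilde m_1(T, \cdot) - \tilde m_2(T, \cdot) \|_{\Gamma^s_z(T)}^2 \, \mc{K}\bigl(T, \max_i \| \tilde m_i(T, \cdot) \|_{\Gamma^s_z(T)}^2\bigr)$. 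To finish, I would convert these back to the $X^s$ norm using precisely the same inequalities already recorded in the proof of Lemma~\ref{lem:XsMFG}, namely $\| \tilde m \|_{L^2_t \Gamma^s_z \cap L^\infty_t \Gamma^{s-1}_z}^2 \leq (1+T) \| \tilde m \|_{X^s}^2$, $\| D_v \tilde u \|_{L^2_t \Gamma^s_z \cap L^\infty_t \Gamma^{s-1}_z}^2 \leq \| \tilde u \|_{X^s}^2$, and $\| \tilde m(T, \cdot) \|_{\Gamma^s_z(T)}^2 \leq \| \tilde m \|_{X^s}^2$, together with their analogues for the differences $\tilde m_1 - \tilde m_2$ and $\tilde u_1 - \tilde u_2$. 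Monotonicity of $\mc{F}$ lets one replace its second argument by $(1+T) \max_i \| (\tilde m_i, \tilde u_i) \|_{X^s}^2$, and the extra factor $(1+T)$ produced by bounding the prefactor $\| (\tilde m_1 - \tilde m_2, D_v \tilde u_1 - D_v \tilde u_2) \|^2$ combines with the $(1+T)$ already present to give $(1+T)^2$. Summing the $\mc{G}$- and $(\mc{H}, \mc{J})$-contributions yields exactly the asserted inequality.

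There is no substantive obstacle: the proof is a linear superposition estimate essentially identical in structure to Lemma~\ref{lem:XsMFG}, and the stability of the data maps is postulated in Assumption~\ref{hyp:data-gen}. The only thing demanding care is the bookkeeping of the powers of $(1+T)$ as one passes between the three norms in play ($\| \cdot \|_{X^s}$, $\| \cdot \|_{L^2_t \Gamma^s_z \cap L^\infty_t \Gamma^{s-1}_z}$, and $\| \cdot \|_{\Gamma^s_z(T)}$), so that the constant emerges with $(1+T)^2$ in front of $\mc{F}$ and $(1+T)$ inside it, together with keeping track of the fact that the common initial datum $m^0$ cancels, so that no $\| m^0 \|_{H^s_z}^2$ term survives in the difference estimate.
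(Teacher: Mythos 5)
Your proof is correct and follows essentially the same route as the paper: subtract the two instances of the linear equations, note the initial datum cancels, apply Proposition~\ref{prop:Kol-WP} for $m_1-m_2$ and Corollary~\ref{cor:Kol-bwd} for $u_1-u_2$, invoke the stability bounds from Assumption~\ref{hyp:data-gen}, and translate back to the $X^s$ norm. The only cosmetic difference is that you combine the $\mc{H}$- and $\mc{J}$-source terms first via $Ta+b\leq(1+T)(a+b)$ before applying the data-map stability bound, whereas the paper estimates the two components separately and then adds; the resulting constants coincide.
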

\begin{proof}
The difference $m_1 - m_2 \in X^s$ satisfies
\be
(\partial_t  + v \cdot D_x- \frac{1}{2} \Delta_v) (m_1 - m_2)  = - \e \div_v \left ( \mc{J}[\tilde m_1, D_v \tilde u_1 ] -  \mc{J}[\tilde m_2, D_v \tilde u_2 ]  \right ) ; \quad (m_1 - m_2) \vert _{t=0} \equiv 0 .
\ee
By Proposition~\ref{prop:Kol-WP} and Assumption~\ref{hyp:data-gen}(\ref{hyp:H-gen}),
\begin{align}
\| m_1 &- m_2 \|_{X^s}^2  \leq 4 \e^2 \int_0^T \left \|  \mc{J}[\tilde m_1, D_v \tilde u_1 ] -  \mc{J}[\tilde m_2, D_v \tilde u_2 ] \right \|_{\Gamma^s(t)}^2 \dd t \\
& \leq 4 \e^2 \| (\tilde m_1 - \tilde m_2, D_v \tilde u_1 - D_v \tilde u_2 ) \|_{L^2_t \Gamma^s_z \cap L^\infty_t \Gamma^{s-1}_z}^2 \, \mc{F} \left (T, \max_i \| (\tilde m_i, D_v \tilde u_i ) \|_{L^2_t \Gamma^s_z \cap L^\infty_t \Gamma^{s-1}_z }^2 \right )  \\
& \leq 4 \e^2 (1+T) \| (\tilde m_1 - \tilde m_2, D_v \tilde u_1 - D_v \tilde u_2 ) \|_{X^s}^2  \, \mc{F} \left (T, (1+T) \max_i \| (\tilde m_i, D_v \tilde u_i ) \|_{X^s }^2 \right ).
\end{align}
The difference $u_1 - u_2 \in X^s$ satisfies
\be
\begin{cases}
- (\partial_t  + v \cdot D_x + \frac{1}{2} \Delta_v) (u_1 - u_2)  =   \e \left ( \mc{H}[\tilde m_1, D_v \tilde u_1 ] -  \mc{H}[\tilde m_2, D_v \tilde u_2 ]  \right ) ;  \\
 (u_1 - u_2) \vert _{t=0} = \mc{G}[ \tilde m_1 (T, \cdot)  ] - \mc{G}[ \tilde m_2 (T, \cdot)  ] .
 \end{cases}
\ee
By Corollary~\ref{cor:Kol-bwd} and Assumption~\ref{hyp:data-gen},
\begin{align}
\| u_1 &- u_2 \|_{X^s}^2  \leq 2 \delta^2 \left \| \mc{G}[ \tilde m_1 (T, \cdot)  ] - \mc{G}[ \tilde m_2 (T, \cdot)  ] \right \|_{\Gamma^s_z (T)}^2\\
&  + 4 \e^2 T \int_0^T \left \|  \mc{H}[\tilde m_1, D_v \tilde u_1 ] -  \mc{H}[\tilde m_2, D_v \tilde u_2 ] \right \|_{\Gamma^s(t)}^2 \dd t \\
& \leq 2 \delta^2 \|  \tilde m_1 (T, \cdot) -  \tilde m_2 (T, \cdot) \|_{\Gamma^s_z (T)}^2 \: \mc{K} \left ( T, \max_i \|  \tilde m_i (T, \cdot) \|_{\Gamma^s_z(T)}^2 \right ) \\
& \; + 4 \e^2 T \| (\tilde m_1 - \tilde m_2, D_v \tilde u_1 - D_v \tilde u_2 ) \|_{L^2_t \Gamma^s_z \cap L^\infty_t \Gamma^{s-1}_z}^2 \, \mc{F} \left (T, \max_i \| (\tilde m_i, D_v \tilde u_i ) \|_{L^2_t \Gamma^s_z \cap L^\infty_t \Gamma^{s-1}_z }^2 \right ) \\
& \leq 2 \delta^2 \|  \tilde m_1 -  \tilde m_2 \|_{X^s}^2 \: \mc{K} \left ( T, \max_i \|  \tilde m_i  \|_{X^s}^2 \right )  \\
& \; + 4 \e^2 T (1+T) \| (\tilde m_1 - \tilde m_2,  \tilde u_1 - \tilde u_2 ) \|_{X^s}^2 \, \mc{F} \left (T, (1+T) \max_i \| (\tilde m_i, \tilde u_i ) \|_{X^s}^2 \right ) .
\end{align}
Summing completes the proof.
\end{proof}

\section{Local Well-posedness}
\label{sec:FixedPoint}

We prove local well-posedness by a fixed point argument.

\begin{definition} Let $m^0 \in H^s_z$ be a given probability density function. Let $\mc{H}, \mc{J}, \mc{G}$ satisfy Assumption~\ref{hyp:data-gen}.
The map $\mc{M} : X^s \times X^s \to X^s \times X^s$ is defined for all $(\tilde m, \tilde u) \in X^s \times X^s$ by
\be
\mc{M} (\tilde m, \tilde u) = (m, u),
\ee
where $m$ is the unique $X^s$ solution of \eqref{eq:cty-gen} and $u$ is the unique $X^s$ solution of \eqref{eq:value-gen}.
\end{definition}

Note that Proposition~\ref{prop:Kol-WP} ensures that $\mc{M}$ is well defined.
We will show that, for suitable choices of the parameters $\e, \delta$ and $T$, $\mc{M}$ is a contraction on a closed ball in $X^s \times X^s$. 
It will then follow from the Banach fixed point theorem that $\mc{M}$ has a unique fixed point on this set.  This is the content of the next lemma.
Recall the functions $\mathcal{F}$ and $\mathcal{K}$ from Assumption \ref{hyp:data-gen}.  

\begin{lemma} \label{lem:M}

Suppose that there exists $L_\ast > 0$ such that
\be \label{hyp:Last}
2 \delta^2 \mc{K} \left ( T, L_\ast \right ) + 4 \e^2 (1+ T) \, \mc{F} \left ( T, (1+T) L_\ast \right ) < L_\ast
\ee
Define
\be \label{def:Kast}
K_\ast : = \sup \left \{ K > 0 : 2 \delta^2 \mc{K} \left ( T, L_\ast + K \right ) + 4 \e^2 (1+ T) \, \mc{F} \left ( T, (1+T) (L_\ast + K) \right ) < L_\ast . \right \},
\ee
with the convention that $K_\ast = + \infty$ if the set is unbounded.

Then, for any $m^0 \in H^{s}(\dom)$ such that
\be
\| m^0 \|_{H^{s}_{z}}^2 \leq \frac{K}{2} < \frac{K_\ast}{2},
\ee
$\mc{M}$ maps the set
\be \label{def:Y}
Y : = \left \{ (m,u) \in X^s \times X^s \, \vert \,  \| (m, u) \|_{X^s}^2 \leq K + L_\ast \right \}
\ee
into itself, and is Lipschitz on $Y$ with Lipschitz constant $\sqrt{(1+T) L_\ast}$.
\end{lemma}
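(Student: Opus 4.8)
The plan is to verify the two claimed properties of $\mathcal{M}$ — that it maps $Y$ into itself and that it is a contraction on $Y$ — by feeding the hypotheses \eqref{hyp:Last}--\eqref{def:Kast} directly into the quantitative estimates of Lemma~\ref{lem:XsMFG} and Lemma~\ref{lem:XsMFG-difference}. These two lemmas do essentially all the analytic work; what remains is bookkeeping with the monotone functions $\mathcal{F}$ and $\mathcal{K}$.

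\textbf{Step 1: invariance of $Y$.} Take $(\tilde m, \tilde u) \in Y$, so $\|(\tilde m, \tilde u)\|_{X^s}^2 \leq K + L_\ast$. Let $(m,u) = \mathcal{M}(\tilde m, \tilde u)$. By Lemma~\ref{lem:XsMFG},
\be
\| m \|_{X^s}^2 + \| u \|_{X^s}^2 \leq 2 \| m^0 \|_{H^s_z}^2 + 2 \delta^2 \mc{K} (T, \| \tilde m \|_{X^s}^2) + 4 \e^2 (1+ T) \mc{F} \left ( T, (1+T) \| (\tilde m, \tilde u )\|_{X^s}^2 \right ) .
\ee
Since $\|\tilde m\|_{X^s}^2 \leq \|(\tilde m,\tilde u)\|_{X^s}^2 \leq K + L_\ast \leq L_\ast + K$ and both $\mathcal{F}, \mathcal{K}$ are non-decreasing in their second argument, the last two terms are bounded by $2\delta^2 \mc{K}(T, L_\ast + K) + 4\e^2(1+T)\mc{F}(T,(1+T)(L_\ast+K))$, which by the definition \eqref{def:Kast} of $K_\ast$ (using $K < K_\ast$) is strictly less than $L_\ast$. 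Together with $2\|m^0\|_{H^s_z}^2 \leq K$, this gives $\|\mathcal{M}(\tilde m,\tilde u)\|_{X^s}^2 < K + L_\ast$, so $\mathcal{M}(\tilde m,\tilde u) \in Y$.

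\textbf{Step 2: the Lipschitz estimate.} Take $(\tilde m_1, \tilde u_1), (\tilde m_2, \tilde u_2) \in Y$ and set $(m_i, u_i) = \mathcal{M}(\tilde m_i, \tilde u_i)$. Lemma~\ref{lem:XsMFG-difference} gives
\be
\| (m_1 - m_2,  u_1 - u_2 ) \|_{X^s}^2 \leq \| (\tilde m_1 - \tilde m_2,  \tilde u_1 - \tilde u_2 ) \|_{X^s}^2 \Big[ 2 \delta^2\mc{K} ( T, \max_i \|  \tilde m_i  \|_{X^s}^2 ) + 4 \e^2 (1+T)^2 \, \mc{F} (T, (1+T) \max_i \| (\tilde m_i, \tilde u_i ) \|_{X^s}^2 )  \Big].
\ee
On $Y$ we have $\max_i \|\tilde m_i\|_{X^s}^2 \leq K + L_\ast$ and $\max_i \|(\tilde m_i,\tilde u_i)\|_{X^s}^2 \leq K+L_\ast$, so monotonicity bounds the bracketed factor by $2\delta^2\mc{K}(T, L_\ast + K) + 4\e^2(1+T)^2 \mc{F}(T,(1+T)(L_\ast+K))$. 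Comparing with \eqref{def:Kast}: that definition guarantees $2\delta^2 \mc{K}(T, L_\ast+K) + 4\e^2(1+T)\mc{F}(T,(1+T)(L_\ast+K)) < L_\ast$, and multiplying the (larger) $\mathcal{F}$-term by the extra factor $(1+T)$ yields that the bracketed factor is at most $(1+T) L_\ast$. Hence the Lipschitz constant is $\sqrt{(1+T) L_\ast}$, as claimed. (Strictly, one should note $2\delta^2\mc{K} \leq 2\delta^2(1+T)\mc{K}$ as well to absorb everything cleanly into the common factor $(1+T)$ times $[2\delta^2\mc{K}(T,L_\ast+K) + 4\e^2(1+T)\mc{F}(T,(1+T)(L_\ast+K))] < (1+T)L_\ast$.)

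\textbf{Main obstacle.} There is no serious analytic obstacle here — the heavy lifting is already done in Lemmas~\ref{lem:XsMFG} and \ref{lem:XsMFG-difference}. The only point requiring care is the factor-chasing in Step 2: the difference estimate carries an extra $(1+T)$ relative to the invariance estimate, and one must check that the condition \eqref{def:Kast} (which controls the smaller quantity) still suffices to bound the larger quantity appearing in the Lipschitz constant by $(1+T)L_\ast$ rather than $L_\ast$. This is exactly why the claimed Lipschitz constant is $\sqrt{(1+T)L_\ast}$ and not $\sqrt{L_\ast}$; one then separately chooses $\e, \delta, T$ small enough (later, in the proof of the main theorem) to make $(1+T)L_\ast < 1$ and obtain an actual contraction.
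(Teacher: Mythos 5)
Your proof is correct and follows essentially the same route as the paper: Step 1 plugs the invariance estimate from Lemma~\ref{lem:XsMFG} into the bound $\|(\tilde m,\tilde u)\|_{X^s}^2 \leq K+L_\ast$ and the definition of $K_\ast$ (using that $K<K_\ast$ forces $K$ to lie in the set, by monotonicity of $\mc{F},\mc{K}$), and Step 2 does the same with Lemma~\ref{lem:XsMFG-difference} and absorbs the extra $(1+T)$ factor by multiplying both sides of the $K_\ast$ inequality through by $(1+T)$. Your parenthetical about also bounding $2\delta^2\mc{K}$ by $2\delta^2(1+T)\mc{K}$ is exactly the bookkeeping detail the paper relies on implicitly.
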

\begin{rmk} \label{rmk:condition}
The condition \eqref{hyp:Last} can be satisfied for any $L_\ast$ and $T$ by taking $\delta, \e$ small enough.
Likewise, $K_{\ast}$ can be taken as large as desired by taking $\delta, \e$ small enough.
\end{rmk}

\begin{proof}[Proof of Lemma \ref{lem:M}]
By Lemma~\ref{lem:XsMFG}, if $(m,u) \in X^s \times X^s$ then
\be
\| \mc{M} (m,u) \|^2_{X^s}  \leq 2 \| m^0 \|_{H^s_z}^2 + 2 \delta^2 \mc{K} (T, \|  (m,u) \|^2_{X^s} ) + 4 \e^2 (1+ T) \mc{F} \left ( T, (1+T) \|  (m,u) \|^2_{X^s} \right )  .
\ee
Hence, if $ \| (m, u) \|_{X^s}^2 \leq K + L_\ast$, then
\be
\| \mc{M} (m,u) \|^2_{X^s}  \leq K + 2 \delta^2 \mc{K} (T, K + L_\ast ) + 4 \e^2 (1+ T) \mc{F} \left ( T, (1+T)(K + L_\ast) \right )  .
\ee
Thus, since $K < K_\ast$,
\be
\| \mc{M} (m,u) \|^2_{X^s}  \leq K + L_\ast ,
\ee
which shows that $\mc{M}$ maps $Y$ into $Y$.

Next, for $i=1,2$ consider $(m_i , u_i ) \in Y$. By Lemma~\ref{lem:XsMFG-difference},
\begin{multline}
\| \mc{M}(m_1, u_1) - \mc{M}(m_2, u_2) \|_{X^s}^2 \leq \| ( m_1,  u_1) - (m_2, u_2 ) \|_{X^s}^2 \\
\times \left (2 \delta^2\mc{K} \left ( T, \max_i \|  m_i \|_{X^s}^2 \right ) + 4 \e^2 (1+T)^2 \, \mc{F} \left (T, (1+T) \max_i \| ( m_i, u_i ) \|_{X^s}^2 \right )  \right ) .
\end{multline}
Since each $(m_i , u_i ) \in Y$, we may estimate this by
\begin{multline}
\| \mc{M}(m_1, u_1) - \mc{M}(m_2, u_2) \|_{X^s}^2 \leq \| ( m_1,  u_1) - (m_2, u_2 ) \|_{X^s}^2 \\
\times \left (2 \delta^2\mc{K} \left ( T, K + L_\ast \right ) + 4 \e^2 (1+T)^2 \, \mc{F} \left (T, (1+T) (K + L_\ast)\right )  \right ) .
\end{multline}
Since $K < K_\ast$,
\be
2 \delta^2\mc{K} \left ( T, K + L_\ast \right ) + 4 \e^2 (1+T)^2 \, \mc{F} \left (T, (1+T) (K + L_\ast)\right ) \leq (1+T) L_\ast .
\ee
Thus
\be
\| \mc{M}(m_1, u_1) - \mc{M}(m_2, u_2) \|_{X^s}^2 \leq(1+T) L_\ast   \| ( m_1,  u_1) - (m_2, u_2 ) \|_{X^s}^2 
\ee
which completes the proof.
\end{proof}

We are ready to state and prove the main theorem of our paper.

\begin{thm}\label{thm:main_proved}
Let $s \geq s_\ast + 1$.
Assume that $\mc{H}, \mc{J}$ and $\mc{G}$ satisfy Assumption~\ref{hyp:data-gen}.
Suppose that $\delta, \e, T > 0$ are such that
there exists $L_\ast < \frac{1}{1 + T}$ satisfying \eqref{hyp:Last}.
Then, for any $m^0 \in H^s_z$ such that $\| m^0 \|_{H^s_z}^2 < K_\ast /2$, there exists a unique distributional solution $(m,u) \in X^s \times X^s$ of the Mean Field Games system \eqref{eq:MFG-gen} such that $\| (m,u) \|_{X^s} < L_\ast + K_\ast$.

\end{thm}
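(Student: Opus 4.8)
The plan is to read the system \eqref{eq:MFG-gen} as a fixed point equation $\mc{M}(m,u)=(m,u)$ and to apply the Banach fixed point theorem, the key observation being that the hypothesis $L_\ast<\tfrac{1}{1+T}$ forces the Lipschitz constant $\sqrt{(1+T)L_\ast}$ provided by Lemma~\ref{lem:M} to be strictly less than one. Concretely, given $m^0\in H^s_z$ with $\|m^0\|_{H^s_z}^2<K_\ast/2$, I would first fix a number $K$ with $2\|m^0\|_{H^s_z}^2\le K<K_\ast$. Lemma~\ref{lem:M} then says that $\mc{M}$ maps the closed ball $Y=Y_K=\{(m,u)\in X^s\times X^s:\|(m,u)\|_{X^s}^2\le K+L_\ast\}$ into itself and is a contraction on it. Since $X^s$ (hence $X^s\times X^s$) is a Banach space and $Y_K$ is a closed ball, $Y_K$ is a complete metric space, so $\mc{M}$ admits a unique fixed point $(m,u)\in Y_K$; well-posedness of $\mc{M}$ itself is guaranteed by Proposition~\ref{prop:Kol-WP}. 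By the definition of $\mc{M}$, the identity $(m,u)=\mc{M}(m,u)$ means precisely that $m$ solves \eqref{eq:cty-gen} and $u$ solves \eqref{eq:value-gen} with the choice $(\tilde m,\tilde u)=(m,u)$, which is exactly the system \eqref{eq:MFG-gen}; moreover $\|(m,u)\|_{X^s}^2\le K+L_\ast<K_\ast+L_\ast$, giving the claimed bound.

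For uniqueness within the whole class $\{\|(m,u)\|_{X^s}^2<L_\ast+K_\ast\}$ --- and not merely within the particular ball $Y_K$ on which the contraction was set up --- I would argue as follows. Let $(m_1,u_1),(m_2,u_2)$ be two distributional solutions of \eqref{eq:MFG-gen} with $\|(m_i,u_i)\|_{X^s}^2<L_\ast+K_\ast$; each is a fixed point of $\mc{M}$. Choose $K'$ with
\[
\max\bigl\{\,2\|m^0\|_{H^s_z}^2,\;\|(m_1,u_1)\|_{X^s}^2-L_\ast,\;\|(m_2,u_2)\|_{X^s}^2-L_\ast\,\bigr\}\;\le\;K'\;<\;K_\ast,
\]
which is possible since all three quantities are strictly below $K_\ast$ (using $\|m^0\|_{H^s_z}^2<K_\ast/2$ and the assumed norm bounds). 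Then $\|m^0\|_{H^s_z}^2\le K'/2<K_\ast/2$, so Lemma~\ref{lem:M} applies with $K'$ in place of $K$: $\mc{M}$ is a strict contraction on $Y_{K'}$, and both $(m_i,u_i)$ lie in $Y_{K'}$. A contraction has at most one fixed point, hence $(m_1,u_1)=(m_2,u_2)$; in particular this coincides with the solution produced in the first step.

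Everything here is a direct consequence of the estimates already in place (Proposition~\ref{prop:Kol-WP}, Corollary~\ref{cor:Kol-bwd}, Lemmas~\ref{lem:XsMFG}--\ref{lem:XsMFG-difference}, and Lemma~\ref{lem:M}), so I do not anticipate a genuine obstacle in this final statement; the one step that deserves explicit care, rather than a one-line invocation of Banach's theorem, is precisely the promotion of uniqueness from ``inside the contraction ball $Y_K$'' to ``inside the full class $\{\|(m,u)\|_{X^s}^2<L_\ast+K_\ast\}$'', which is handled by the choice-of-$K'$ argument above. One should also keep track of the harmless notational point that Lemma~\ref{lem:M} is phrased in terms of squared norms $\|\cdot\|_{X^s}^2$, so the bound stated in the theorem is read in that sense.
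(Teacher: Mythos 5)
Your proposal is correct and takes essentially the same route as the paper: fix $K$ with $2\|m^0\|_{H^s_z}^2\le K<K_\ast$, use Lemma~\ref{lem:M} (with $(1+T)L_\ast<1$) to make $\mc{M}$ a contraction of the complete ball $Y$ into itself, and apply the Banach fixed point theorem, a fixed point of $\mc{M}$ being precisely a solution of \eqref{eq:MFG-gen}. Your extra step with $K'$, promoting uniqueness from the particular ball $Y_K$ to the full class $\{\|(m,u)\|_{X^s}^2<L_\ast+K_\ast\}$, is a refinement that the paper's proof leaves implicit, not a different method.
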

\begin{proof}
Let $K>0$ be such that $2 \| m^0 \|_{H^s_z} \leq K < K_\ast$, and let $Y$ be defined by \eqref{def:Y}.
Since $(1+T)L_\ast < 1$, by Lemma~\ref{lem:M} $\mc{M}$ is a contraction on $Y$, which is a complete metric space (and non-empty since $(0,0) \in \mc{M}$).
Hence, by the Banach fixed point theorem, $\mc{M}$ has a unique fixed point $(m,u) \in Y$. $\mc{M}(m,u) = (m,u)$ is precisely the statement that $(m,u)$ satisfies the Mean Field Games system \eqref{eq:MFG-gen}.
\end{proof}

\begin{rmk}\label{probabilityRemark}
The solution $m$ is a probability distribution at each time as long as the initial data $m^{0}$ is.  This is because the equation for $m$
is positivity-preserving and also preserves the mean, as long as solutions have some slight regularity.  The $H^{s}$ regularity of the
theorem is certainly sufficient.
\end{rmk}

As a consequence of Theorem~\ref{thm:main_proved} and Proposition~\ref{prop:HRegularity}, in the local case we have the following theorem.

\begin{thm}\label{finalTheorem}
Let $s \geq s_\ast + 1$. Let $H, g$ satisfy Assumptions~\ref{hyp:main} and \ref{hyp:terminal}.
Let $m^0 \in H^s_z$ and $T>0$ be given. 
There exist positive parameters $(\e_\ast, \delta_\ast)$ such that, for all $\e < \e_\ast$, $\delta < \delta_\ast$, there exists a distributional solution $(u,m) \in X^s \times X^s$ of \eqref{eq:MFG}, which is the unique solution such that $\|(u, m)\|_{X^s} \leq Q_\ast$, for some constant $Q_\ast > 2 \| m_0 \|_{H^s_z}$.

Furthermore, if $s \geq s_\ast + 2$, the solution is classical.
\end{thm}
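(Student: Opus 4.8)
The plan is to derive Theorem~\ref{finalTheorem} from Theorem~\ref{thm:main_proved} by checking that the local data fit the abstract framework of Subsection~\ref{sec:MFGEstimates}, and then to promote the distributional solution to a classical one in the range $s \geq s_\ast + 2$ via Sobolev embedding and a short bootstrap.

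First I would verify Assumption~\ref{hyp:data-gen} for the local operators. Since $H$ and $g$ satisfy Assumptions~\ref{hyp:main} and~\ref{hyp:terminal}, Proposition~\ref{prop:HRegularity} supplies both the mapping properties and the quantitative estimates for the composition operators $\mc{H}, \mc{J}, \mc{G}$ of Definition~\ref{def:Ops}, expressed through the non-decreasing functions $F$, $G$ appearing there. Setting, for instance, $\mc{F}(T,R) := (T+R)(1+T)^{2s} F(R)$ and $\mc{K}(T,R) := (1+T)^{2s} G(R)$, which are non-decreasing in each variable, and using $\| (m,p) \|_{L^\infty_t \Gamma^{s-1}_z}^2 \le \| (m,p) \|_{L^2_t \Gamma^s_z \cap L^\infty_t \Gamma^{s-1}_z}^2$ together with monotonicity of $F$, $G$, one recovers exactly the inequalities of Assumption~\ref{hyp:data-gen}(\ref{hyp:H-gen})--(\ref{hyp:G-gen}). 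With these operators, \eqref{eq:MFG} is precisely the instance of \eqref{eq:MFG-gen} with $\mc{H}[m,p]=H(t,z,m,p)$, $\mc{J}[m,p]=m D_p H(t,z,m,p)$ and $\mc{G}[\mu]=g(\cdot,\mu)$, so Theorem~\ref{thm:main_proved} becomes applicable.

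Next I would fix the parameters. Given $T>0$, I would take $L_\ast := \tfrac{1}{2(1+T)}$, so that $L_\ast < \tfrac{1}{1+T}$. By Remark~\ref{rmk:condition} (equivalently, because $\mc{F}$ and $\mc{K}$ enter \eqref{hyp:Last} multiplied by $\e^2$ and $\delta^2$), there exist $\e_\ast, \delta_\ast > 0$ such that \eqref{hyp:Last} holds with this $L_\ast$ for all $\e \in (0,\e_\ast)$, $\delta \in (0,\delta_\ast)$; moreover, since for each fixed $K>0$ the left side of the inequality defining $K_\ast$ in \eqref{def:Kast} tends to $0 < L_\ast$ as $\e,\delta \to 0$, after shrinking $\e_\ast, \delta_\ast$ if necessary one may also ensure $K_\ast > 2\|m^0\|_{H^s_z}^2$, and hence $Q_\ast := L_\ast + K_\ast > 2\|m^0\|_{H^s_z}$. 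Theorem~\ref{thm:main_proved} then yields the asserted distributional solution $(u,m)\in X^s\times X^s$ of \eqref{eq:MFG}, unique under the constraint $\|(u,m)\|_{X^s} < Q_\ast$.

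Finally, for $s \geq s_\ast + 2$ I would bootstrap to classical regularity. From $(u,m)\in X^s\times X^s$ one has $u,m\in C^0_t H^s_z \hookrightarrow C^0_t C^2_z$, and likewise $D_x u, D_v u, D_x m, D_v m \in C^0_t H^{s-1}_z$ and $D_v^2 u, D_v^2 m \in C^0_t H^{s-2}_z$; since $s-2 \ge s_\ast$, all these quantities lie in $C^0_t C^0_z$ via the embedding $H^{s'}_z \hookrightarrow L^\infty_z$ for $s' \ge s_\ast$. Hence $H(t,z,m,D_v u)$ and $\div_v\!\big(m\, D_p H(t,z,m,D_v u)\big)$ are continuous in $(t,z)$ --- being compositions of $H$, $D_p H$ and their first $(z,m,p)$-derivatives (continuous by Assumption~\ref{hyp:main}) with continuous arguments --- while $v\cdot D_x u$ and $v\cdot D_x m$ are continuous as products of continuous functions. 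Solving the two PDEs in \eqref{eq:MFG} for $\partial_t u$ and $\partial_t m$ then exhibits these distributional time-derivatives as continuous functions, which, together with the continuity of $u,m$ themselves, forces $u,m\in C^1$ in $t$ and the equations to hold pointwise. I expect the only points requiring genuine care to be the simultaneous choice of $L_\ast$ strictly below $1/(1+T)$ and $K_\ast$ large enough to accommodate the prescribed $m^0$ (both controlled by taking $\e,\delta$ small), and the observation that in the classical bootstrap only continuity, not boundedness, of the kinetic transport term on the unbounded velocity variable is needed.
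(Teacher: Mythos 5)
Your proof is correct and follows essentially the same route as the paper's: verify Assumption~\ref{hyp:data-gen} via Proposition~\ref{prop:HRegularity}, fix $L_\ast < (1+T)^{-1}$, invoke Remark~\ref{rmk:condition} to make $\e_\ast,\delta_\ast$ small enough that $K_\ast$ exceeds $2\|m^0\|_{H^s_z}^2$, apply Theorem~\ref{thm:main_proved}, and bootstrap to classical regularity when $s\geq s_\ast+2$. Two small remarks on where you diverge, both in your favour. First, you spell out the parameter logic more carefully than the paper does: the paper somewhat informally ``chooses $K_\ast$'' before noting that \eqref{hyp:Last} can be met, whereas you correctly observe that, for a fixed target $K$, the left side of \eqref{def:Kast} tends to $0<L_\ast$ as $\e,\delta\to 0$, so the genuine $K_\ast$ of \eqref{def:Kast} can be driven above $2\|m^0\|_{H^s_z}^2$. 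Second, for the classical-regularity bootstrap the paper asserts the right-hand sides lie in $C^0_t C^{0,\alpha}_z$, which is slightly imprecise globally because the transport term $v\cdot D_x u$ need not have a uniform H\"older seminorm as $|v|\to\infty$; your continuity-only version, together with your closing observation that only continuity (not boundedness) of the kinetic transport term is required, is the cleaner way to close this step.

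One trivial bookkeeping point: your last line of the parameter paragraph claims $Q_\ast:=L_\ast+K_\ast>2\|m^0\|_{H^s_z}$ follows from $K_\ast>2\|m^0\|_{H^s_z}^2$, which is not automatic when $\|m^0\|_{H^s_z}<1$. This mirrors a missing square in the paper's own statement of $Q_\ast$ and in the first line of its proof; the substantive content (satisfying the hypothesis $\|m^0\|_{H^s_z}^2<K_\ast/2$ of Theorem~\ref{thm:main_proved}) is handled correctly by you.
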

\begin{proof}
Choose constants $K_\ast > 2 \| m_0 \|_{H^s_z}$ and $L_\ast < (1+T)^{-1}$. As explained in Remark~\ref{rmk:condition}, the condition \eqref{hyp:Last} can be satisfied for these values of $L_\ast$, $T$ and $K_\ast$ by taking $\e_\ast$ and $ \delta_\ast$ small enough. The existence and uniqueness statement then follows from Theorem~\ref{thm:main_proved}.

It remains to prove only the final assertion. If $s \geq s_\ast + 2$, then $u, m \in C^0_t C^{2,\alpha}_z$. Thus
\begin{align}
& \partial_t u = - v \cdot D_x u - \frac{1}{2} \Delta_v u + \e H(t,z, m, D_v u) \in C^0_t C^{0,\alpha}_z \\
& \partial_t m = - v \cdot D_x m + \frac{1}{2} \Delta_v m - \e \div_v \left ( m D_p H(t,z, m(t,z), D_v u(t,z)) \right ) \in C^0_t C^{0,\alpha}_z,
\end{align}
and hence the solution is classical.
\end{proof}

\begin{rmk} If instead one wished to fix $\e$ and $\delta$ rather than take them as small as necessary for Theorem \ref{finalTheorem},
it is in some cases still possible to find existence of a solution.  As long as the Hamiltonian is such that the associated functions 
$\mathcal{F}$ and $\mathcal{K}$ admit $T>0$ and $L_{*}>0$ such that \eqref{hyp:Last} is satisfied, and if $L_{\ast}<(1+T)^{-1},$
then there exists $K_{*}>0$ such that
for any $m_{0}$ with $\|m_{0}\|_{H_{z}^{s}}^{2}<\frac{K_{*}}{2},$ we have demonstrated that there is a solution of \eqref{eq:MFG}.
\end{rmk}

\appendix 

\section{Approximation argument for a key estimate}\label{appendixSection}

We give the approximation argument to show that, if $w \in X^0$ satisfies
\be \label{eq:Kol-app}
\left(\partial_t + v \cdot D_x -  \frac{1}{2}   \Delta_v\right) w = h_1 + \div_v h_2 
\ee
in the sense of distributions $\mc{D}'((0,T) \times \dom )$, where $h_1, h_2 \in L^2_{t,x,v}$,
then for all $t \in [0,T]$,
\be
\frac{1}{2} \| w(t) \|_{L^2_{z}}^2 + \frac{1}{2}  \int_0^t \| D_v w(\tau) \|_{L^2_{z}}^2 \dd \tau  = \frac{1}{2} \| w(0) \|_{L^2_{z}}^2 + \int_0^t \int_{\dom}  h_1   w \dd z - \int_0^t \int_{\dom}  h_2 D_v w \dd z\dd \tau .
\ee

Recall that $w$ satisfies \eqref{eq:Kol-app} in $\mc{D}'\left ( (0,T) \times \dom \right )$ if and only if, for all test functions $\phi \in C^\infty_c \left ( (0,T) \times \dom \right)$, the following equality holds:
\be
 - \int_0^T \int_{\dom} w \left (\partial_t + v \cdot \nabla_x +  \frac{1}{2}  \Delta_v \right ) \phi \dd z \dd t = \int_0^t \int_{\dom} h_1 \phi - h_2 \cdot \nabla_v \phi \dd z \dd t .
\ee
Since $\nabla_v w \in L^{2}_{t,z}$ by assumption, it is equivalent to require that
\begin{multline} \label{eq:Kol-weak}
 - \int_0^T \int_{\dom} w \left (\partial_t + v \cdot \nabla_x \right )\phi \dd z \dd t  +  \frac{1}{2}  \int_0^T \int_{\dom} \nabla_v w \cdot \nabla_v \phi \dd z \dd t \\
 = \int_0^t \int_{\dom} h_1 \phi - h_2 \cdot \nabla_v \phi \dd z \dd t .
\end{multline}

We now construct a sequence of smooth test functions that approximates $w.$
Fix a smooth, non-negative, radially symmetric function $\psi \in C^\infty_c(\R \times \dom)$ and define for $\delta > 0$
\be
\psi_\delta(t,x,v) : = \delta^{-(1 + 2d)} \psi \left (\frac{t}{\delta},\frac{x}{\delta},\frac{v}{\delta} \right) .
\ee
We localise separately in the $t$ and $(x,v)$ variables. In $(x,v)$, for each $R>0$ let $\zeta_R \in C^\infty_c(\dom)$ be a non-negative function satisfying
\be
\zeta_R(z) = \begin{cases}
1, & |z| \leq R, \\
0, & |z| > 2R,
\end{cases} \quad \| \nabla \zeta_R \|_{L^\infty} \leq \frac{C}{R} .
\ee
For the time variable, we fix a time $t \in (0,T]$. Then, for each $0 < \tau < t/2$, let $\eta_\tau$ be a smooth non-negative function of $s$ such that
\be
\eta(s) = \begin{cases}
1, & \tau \leq s \leq t -\tau \\
0, & s \leq \frac{\tau}{2}, \; s \geq t - \frac{\tau}{2}
\end{cases} \quad \| \eta_\tau ' \|_{L^\infty} \leq \frac{C}{\tau} .
\ee

Now define the test function $\phi : = \eta_\tau \zeta_R (\psi_\delta \ast (\eta_\tau \zeta_R w))$. We substitute this choice of $\phi$ into the weak form \eqref{eq:Kol-weak} to find (dropping subscripts for ease of notation) that
\begin{multline} \label{eq:Kol-weak-tested}
 - \int_0^t \int_{\dom} ( (\partial_t \eta)  \zeta w) \, \psi \ast (\eta \zeta w) \dd z \dd s +  \frac{1}{2}  \int_0^t \int_{\dom} (\eta  \zeta \nabla_v w)  \cdot \psi \ast (\eta \zeta \nabla_v w) \dd z \dd s   \\
  - \int_0^t \int_{\dom} (\eta \zeta w) \left ((\partial_t + v \cdot \nabla_x)\psi \right) \ast (\eta \zeta w) \dd z \dd s  \\
=   \int_0^t \int_{\dom} ( \eta \zeta h_1) \psi \ast (\eta \zeta w)   - \eta \zeta h_2 \cdot [ \psi \ast (\eta \zeta \nabla_v w) ] \dd z \dd s \\
  +  \int_0^t \int_{\dom} \eta \left [ \left ( v \cdot \nabla_x \zeta \right )  w  - \nabla_v \zeta \cdot \left(h_2 +  \frac{1}{2}   \nabla_v w \right) \right ] \, \psi \ast (\eta \zeta w) \dd z \dd s   \\
  -  \int_0^t \int_{\dom} \eta \zeta \left( h_2 + \frac{1}{2}   \nabla_v w \right) \cdot  \psi \ast  (\eta \nabla_v  \zeta w) \dd z \dd s .
\end{multline}
Since $\psi$ is radially symmetric,
\be
\partial_t \psi(s,x,v) = - \partial_t \psi(-s,x,v), \qquad \nabla_x \psi(s,x,v) = - \nabla_x \psi(s, - x,v).
\ee
It follows that
\begin{align*}
\int_0^t \int_{\dom} (\eta \zeta w) \left ((\partial_t + v \cdot \nabla_x)\psi \right)& \ast (\eta \zeta w) \dd z \dd s\\ 
&= - \int_0^t \int_{\dom} (\eta \zeta w) \left ((\partial_t + v \cdot \nabla_x)\psi \right) \ast (\eta \zeta w) \dd z \dd s  = 0 .
\end{align*}

We now take the limit $\delta \to 0$. Since $w, \nabla_v w \in L^2_{t,z}$, the same is true of the functions $\eta \zeta w, \eta \nabla \zeta w, \eta \zeta \nabla_v w \in L^2_{t,z}$. Then, by standard results on the continuity of mollification (continuity of translations) in $L^2$,
\be
\lim_{\delta \to 0} \| \psi_\delta \ast (\eta \zeta w) - \eta \zeta w \|_{L^2_{t,z}} + \| \psi_\delta \ast (\eta \nabla \zeta w) - \eta \nabla \zeta w \|_{L^2_{t,z}} + \| \psi_\delta \ast (\eta \zeta \nabla_v w) - \eta \zeta \nabla_v w \|_{L^2_{t,z}} = 0 . 
\ee
Since also $(\partial_t \eta)  \zeta w \in L^2_{t,z}$, we may pass to the limit in equation \eqref{eq:Kol-weak-tested} to find that
\begin{multline} \label{eq:weak-energy-eta-zeta}
 - \frac{1}{2} \int_0^t \int_{\dom} \partial_t (\eta^2) \zeta^2 |w|^2 \dd z \dd s + \frac{1}{2}  \int_0^t \int_{\dom} \eta^2  \zeta^2 |\nabla_v w|^2 \dd z \dd s   \\
=   \int_0^t \int_{\dom}  \eta^2 \zeta^2 [ h_1  w  -  h_2 \cdot  \nabla_v w] \dd z \dd s \\
  +  \int_0^t \int_{\dom} \eta^2 \zeta \left ( v \cdot \nabla_x \zeta \right ) w^2  \dd z \dd s -  \int_0^t \int_{\dom} \eta^2 \zeta \nabla_v  \zeta \cdot ( \nabla_v w + 2  h_2 ) w \dd z \dd s .
\end{multline}

Next, we take the limit $R \to +\infty$. The functions $\zeta^2, \zeta \left ( v \cdot \nabla_x \zeta \right )$ and $\zeta \nabla_v  \zeta$ are all bounded uniformly in $R$ and converge pointwise as $R \to + \infty$. Then, since $w, \nabla_v w, h_1$ and $h_2$ are all $L^2_{t,z}$ functions, we may pass to the limit in the integrals in \eqref{eq:weak-energy-eta-zeta} by the dominated convergence theorem. Since
\be
\lim_{R \to +\infty} \zeta^2 = 1, \quad \lim_{R \to +\infty} \zeta \left ( v \cdot \nabla_x \zeta \right ) = 0,  \quad \lim_{R \to +\infty} \zeta \nabla_v  \zeta = 0,
\ee
we deduce that
\be \label{eq:weak-energy-eta}
 - \frac{1}{2} \int_0^t \int_{\dom} \partial_t (\eta^2) |w|^2 \dd z \dd s + \frac{1}{2} \int_0^t \int_{\dom} \eta^2 |\nabla_v w|^2 \dd z \dd s 
=   \int_0^t \int_{\dom}  \eta^2 [ h_1  w  -  h_2 \cdot  \nabla_v w] \dd z \dd s .
\ee

Finally, we take the limit $\tau \to 0$. Since $\lim_{\tau \to 0} \eta^2 = 1$ pointwise, the convergence of the two terms involving $\eta^2$ follows by dominated convergence as before. It remains only to consider the term
\be
 - \frac{1}{2} \int_0^t \int_{\dom} \partial_t (\eta^2) |w|^2 \dd z \dd s =  - \frac{1}{2} \int_0^\tau \int_{\dom} \partial_t (\eta^2) |w|^2 \dd z \dd s - \frac{1}{2} \int_{t-\tau}^t \int_{\dom} \partial_t (\eta^2) |w|^2 \dd z \dd s .
\ee
Now recall that $s \mapsto w(s,\cdot)$ is a continuous trajectory in $L^2(\dom)$. We write, for example,
\begin{align*}
 - \frac{1}{2} \int_0^\tau \int_{\dom} \partial_t (\eta^2) |w|^2 \dd z \dd s& =  - \frac{1}{2} \| w(0)\|_{L^2_z}^2 \int_0^\tau \partial_t (\eta^2)  \dd s\\ 
 &+ \frac{1}{2} \int_0^\tau \partial_t (\eta^2) (\| w(0)\|_{L^2_z}^2 - \| w(s)\|_{L^2_z}^2) \dd s .
\end{align*}
We evaluate the integral 
\be
 \int_0^\tau \partial_t (\eta^2)  \dd s = \eta(\tau)^2 - \eta(0)^2 = 1 - 0 = 1.
 \ee
 For the remaining term, since $|\frac{1}{2} (\eta^2)'| \leq \frac{C}{\tau}$,
 \be
 \left | \frac{1}{2} \int_0^\tau \partial_t (\eta^2) (\| w(0)\|_{L^2_z}^2 - \| w(\tau)\|_{L^2_z}^2) \dd s \right | \leq \frac{C}{\tau} \, \tau \, \sup_{s \in [0,\tau]} (\| w(0)\|_{L^2_z}^2 - \| w(s)\|_{L^2_z}^2) .
 \ee
The right hand side converges to zero as $\tau$ tends to zero by $L^2(\dom)$ continuity of $w$. Thus
\be
\lim_{\tau \to 0}  - \frac{1}{2} \int_0^\tau \int_{\dom} \partial_t (\eta^2) |w|^2 \dd z \dd s = - \frac{1}{2} \| w(0)\|_{L^2_z}^2 .
\ee
Similarly,
\be
\lim_{\tau \to 0}  - \frac{1}{2} \int_0^\tau \int_{\dom} \partial_t (\eta^2) |w|^2 \dd z \dd s = - \frac{1}{2} \| w(t)\|_{L^2_z}^2 \left ( \int_{t-\tau}^t \partial_t (\eta^2)  \dd s \right ) = \frac{1}{2} \| w(t)\|_{L^2_z}^2 .
\ee

We conclude that, for all $t \in [0,T]$,
\be \label{eq:app-weak-energy}
\frac{1}{2} \| w(t)\|_{L^2_z}^2 + \frac{1}{2} \int_0^t \int_{\dom} |\nabla_v w|^2 \dd z \dd s 
=  \frac{1}{2} \| w(0)\|_{L^2_z}^2 + \int_0^t \int_{\dom}  h_1  w  -  h_2 \cdot  \nabla_v w \dd z \dd s .
\ee

\section*{Acknowledgements}
DMA is grateful to the National Science Foundation for support through grant DMS-2307638. MGP is grateful for support from the Additional Funding Programme for Mathematical Sciences, delivered by EPSRC (EP/V521917/1) and the Heilbronn Institute for Mathematical Research. ARM has been partially supported by the EPSRC New Investigator Award ``Mean Field Games and Master equations'' under award no. EP/X020320/1 and by the King Abdullah University of Science and Technology Research Funding (KRF) under award no. ORA-2021-CRG10-4674.2.

\bibliography{kMFGbib}
\bibliographystyle{abbrv}

\end{document}